\def\COMMENT#1{}
\let\COMMENT=\footnote
\def\TASK#1{}
\def\COMMENT#1{}
\newcommand{\eps}{\varepsilon} 
\newcommand{\N}{\mathbb{N}} 
\newcommand{\F}{\mathcal{F}} 
\newcommand*\linenomathpatch[1]{%
	\expandafter\pretocmd\csname #1\endcsname {\linenomath}{}{}%
	\expandafter\pretocmd\csname #1*\endcsname{\linenomath}{}{}%
	\expandafter\apptocmd\csname end#1\endcsname {\endlinenomath}{}{}%
	\expandafter\apptocmd\csname end#1*\endcsname{\endlinenomath}{}{}%
}
\newcommand*\linenomathpatchAMS[1]{%
	\expandafter\pretocmd\csname #1\endcsname {\linenomathAMS}{}{}%
	\expandafter\pretocmd\csname #1*\endcsname{\linenomathAMS}{}{}%
	\expandafter\apptocmd\csname end#1\endcsname {\endlinenomath}{}{}%
	\expandafter\apptocmd\csname end#1*\endcsname{\endlinenomath}{}{}%
}
\let\linenomathAMS\linenomathWithnumbers
\patchcmd\linenomathAMS{\advance\postdisplaypenalty\linenopenalty}{}{}{}
\let\linenomathAMS\linenomathNonumbers
\newenvironment{proofclaim}[1][Proof of the claim]{\begin{proof}[#1]}{\end{proof}}
\theoremstyle{plain}
\newtheorem{theorem}{Theorem}[section]
\crefname{theorem}{Theorem}{Theorems}
\newtheorem{proposition}[theorem]{Proposition}
\crefname{proposition}{Proposition}{Propositions}
\newtheorem{corollary}[theorem]{Corollary}
\crefname{corollary}{Corollary}{Corollaries}
\newtheorem{lemma}[theorem]{Lemma}
\crefname{lemma}{Lemma}{Lemmata}
\newtheorem{conjecture}[theorem]{Conjecture}
\crefname{conjecture}{Conjecture}{Conjectures}
\crefname{problem}{Problem}{Problem}
\newtheorem{claim}[theorem]{Claim}
\crefname{claim}{Claim}{Claims}
\newtheorem{observation}[theorem]{Observation}
\crefname{observation}{Observation}{Observations}
\crefname{setup}{Setup}{Setups}
\crefname{fact}{Fact}{Facts}
\crefname{algoritheorem}{Algoritheorem}{Algoritheorems}
\crefname{remark}{Remark}{Remarks}
\crefname{example}{Example}{Examples}
\theoremstyle{definition}
\newtheorem{definition}[theorem]{Definition}
\crefname{definition}{Definition}{Definitions}
\crefname{construction}{Construction}{Constructions}
\crefname{question}{Question}{Questions}
\numberwithin{equation}{section}
\crefname{section}{Section}{Sections}
\crefname{appendix}{Appendix}{Appendix}
\crefname{figure}{Figure}{Figures}
\def\COMMENT#1{}
\let\COMMENT=\footnote          
\let\polishlcross=\l
\def\l{\ifmmode\ell\else\polishlcross\fi}
\renewcommand{\rho}{\varrho}
\newcommand{\sm}{\setminus}
\renewcommand{\subset}{\subseteq}
\newcommand{\NATS}{\mathbb{N}}
\newcommand{\INTS}{\mathbb{Z}}
\let\vn\relax
\newcommand{\vn}{\mathbf{1}}
\newcommand{\cL}{\mathcal{L}}
\author[Hi\d{\^e}p H\`an]{Hi\d{\^e}p H\`an}
\address[Hi\d{\^e}p H\`an]{Departamento de Matem\'atica y Ciencia de la Computaci\'on, Universidad de Santiago de Chile, Las Sophoras 173, Estaci\'on Central, Santiago, Chile
}
\email{hiep.han@usach.cl}
\author[Richard Lang]{Richard Lang}
\address[Richard Lang]{Departament de Matemàtiques, Universitat Politècnica de Catalunya, Spain}
\email{richard.lang@upc.edu}
\author[Jo\~ao Pedro Marciano]{Jo\~ao Pedro Marciano}
\address[Jo\~ao Pedro Marciano]{IMPA, Estrada Dona Castorina 110, Jardim Bot\^anico, Rio de Janeiro, RJ, Brazil}
\email{joao.marciano@impa.br}
\author[Mat\'ias Pavez-Sign\'e]{Mat\'ias Pavez-Sign\'e}
\address[Mat\'ias Pavez-Sign\'e]{Centro de Modelamiento Matemático (CNRS IRL2807), Universidad de Chile, Chile}
\email{mpavez@dim.uchile.cl}
\author[Nicol\'as Sanhueza-Matamala]{Nicol\'as Sanhueza-Matamala}
\address[Nicol\'as Sanhueza-Matamala]{Departamento de Ingeniería Matemática, Facultad de Ciencias Físicas y Matemáticas, Universidad de Concepción, Chile}
\email{nsanhuezam@udec.cl}
\author[Andrew Treglown]{Andrew Treglown}
\address[Andrew Treglown]{University of Birmingham, United Kingdom}
\email{a.c.treglown@bham.ac.uk}
\author[Camila Z\'arate-Guer\'en]{Camila Z\'arate-Guer\'en}
\address[Camila Z\'arate-Guer\'en]{University of Birmingham, United Kingdom}	
\email{ciz230@student.bham.ac.uk}
\begin{document}
	\title{Colour-bias perfect matchings in hypergraphs}

    
	\begin{abstract}
		We study conditions under which an edge-coloured hypergraph has a particular substructure that contains more than the trivially guaranteed number of monochromatic edges.
		Our main result solves this problem for perfect matchings under minimum degree conditions.
		This answers recent questions of Gishboliner, Glock and Sgueglia, and of Balogh, Treglown and Z\'arate-Guer\'en.
	\end{abstract}
	\maketitle

    \vspace{-1.0cm}
	
	\section{Introduction}
	An old problem of Erd\H{o}s concerns the existence of colour-bias substructures in edge-coloured hypergraphs~\cite{Erd63,ES72}; that is, a copy of a particular hypergraph $F$ in an $r$-edge-coloured hypergraph $H$ that contains significantly more than a $1/r$-proportion of its edges in the same colour.
	We investigate this question in the case when $H$ has large minimum degree and $F$ is a perfect matching.
	
	Colour-bias problems for spanning structures were initially studied for graphs.
	A classic result of Dirac states that every graph $G$ on $n \geq  3$ vertices with minimum degree $\delta(G)\geq  n/2$ contains a Hamilton cycle; moreover, this minimum degree condition is sharp as simple constructions show.
	Balogh, Csaba, Jing and Pluh\'ar~\cite{BCJ+20} gave a colour-bias analogue of this result: given any $\eps>0$, there exists a $\gamma>0$ so that if $G$ is a sufficiently large $n$-vertex graph with  $\delta(G)\geq (3/4 + \eps) n$, then any $2$-edge-colouring of $G$ admits a Hamilton cycle with at least $(1/2 + \gamma)n$ monochromatic edges. Moreover, for $n$ divisible by $4$, there are $2$-edge-coloured $n$-vertex graphs $G$ with $\delta (G)=3n/4$ so that \emph{every} Hamilton cycle in $G$ contains precisely $n/2$ edges in each colour.
	Subsequently, this result was generalised to $r$-edge-colourings by Freschi, Hyde, Lada and Treglown~\cite{FHL+21} and Gishboliner, Krivelevich and Michaeli~\cite{GKP22}.\footnote{In these results, the $3/4$ in the minimum degree condition is replaced by $(r+1)/2r$.} 
	Analogous results (in the $2$-edge-coloured setting) have been established for $K_r$-factors by Balogh, Csaba,  Pluh\'ar and Treglown~\cite{BCP+21}, and more generally for $H$-factors by Brada\v{c}, Christoph and Gishboliner~\cite{BCG+23}.
	A version for $k$th powers of  Hamilton cycles  was proved by Brada\v{c}~\cite{Bra22}.
	Colour-bias problems have also been considered for random graphs by Gishboliner, Krivelevich and Michaeli~\cite{GKM22}.

	In what follows, we focus on related questions for hypergraphs.
	Formally, a \emph{$k$-uniform hypergraph} (\emph{$k$-graph} for short)
	$H$ has a set of \emph{vertices}~$V(H)$ and a set of \emph{edges}~$E(H)$, where each edge consists of $k$ vertices.
 For $1 \leq \ell \leq k-1$, the
	\emph{minimum $\ell$-degree of $H$}, denoted $\delta_\ell(H)$, is the maximum
	$m$ such that every set of $\ell$ vertices in $H$ is contained in at least $m$
	edges.
	The $\ell=1$ case is referred to as the \emph{minimum vertex degree}; the $\ell=k-1$ case is the  \emph{minimum codegree}.
	
	Colour-bias problems for hypergraphs have been investigated for tight Hamilton cycles and perfect matchings.
	Mansilla Brito~\cite{Man23} gave a minimum codegree result for forcing 
	a colour-bias copy of a tight Hamilton cycle in a $2$-edge-coloured $3$-graph. 
	More generally, Gishboliner,  Glock and  Sgueglia~\cite{GGS23} determined optimal minimum codegree conditions for colour-bias tight Hamilton cycles in $r$-edge-coloured $k$-graphs for all $r\geq 2$ and $k\geq 3$.
	
	 Recall that a 
	\emph{perfect matching} in a hypergraph $H$ is a collection of vertex-disjoint edges that covers all vertices of $H$.
	For $1\leq \ell <k$, we define~$m_{\ell,k}$ as the \emph{(asymptotic)
		minimum $\ell$-degree existence threshold for perfect matchings}. More precisely,
	$m_{\ell,k}$ is the infimum $c \in [0,1]$ such that for every $\eps
	>0$ and $n$ sufficiently large and divisible by $k$, every $n$-vertex $k$-graph $H$ with
	$\delta_\ell(H) \geq (c + \eps)\binom{n-\ell}{k-\ell}$ contains a perfect matching.
	
	A simple consequence of Dirac's theorem is that $m_{1,2} = 1/2$.
	This was extended to higher uniformities and codegrees by R\"odl, Ruci\'nski and Szemer\'edi~\cite{RRS09b}, who in fact proved a more exact result.
	There is a large body of work for lower degree types, and we refer the reader to the survey of Zhao~\cite{Zha16} for a more detailed history.
	We remark, however, that the minimum vertex degree case of the problem is largely open, and $m_{1,k}$ is thus far only known for $k \leq 5$~\cite{AFH+12, HPS09, Kha16}. 
	It is widely believed that the thresholds are attained by certain simple partite constructions (see, e.g.,~\cite{Zha16}), which leads to the central problem of the area:
	\begin{conjecture} \label{conjecture:matchingthresholds}
		For every $1 \leq \ell < k$, we have $m_{\ell,k} = \max 
		\left \{ 
		\frac{1}{2}, 1- \left ( \frac{k-1}{k}\right )^{k-\ell}  \right\}.$
	\end{conjecture}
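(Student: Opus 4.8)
The plan is to establish the two inequalities separately: the lower bound $m_{\ell,k}\ge\max\{1/2,\,1-((k-1)/k)^{k-\ell}\}$ via extremal constructions, and the matching upper bound via the absorption method, with essentially all of the difficulty in the latter. For the lower bound, write $\mu=\max\{1/2,\,1-((k-1)/k)^{k-\ell}\}$ and, for each small $\eps>0$ and each large $n$ with $k\mid n$, exhibit an $n$-vertex $k$-graph $H$ with $\delta_\ell(H)\ge(\mu-\eps)\binom{n-\ell}{k-\ell}$ and no perfect matching. To realise the term $1-((k-1)/k)^{k-\ell}$, take $V(H)=A\cup B$ with $|B|=n/k-1$ and let $E(H)$ consist of all $k$-sets meeting $B$; then every matching has at most $|B|=n/k-1$ edges and hence covers fewer than $n$ vertices, while an $\ell$-set inside $A$ lies in $\binom{n-\ell}{k-\ell}-\binom{|A|-\ell}{k-\ell}=\bigl(1-((k-1)/k)^{k-\ell}+o(1)\bigr)\binom{n-\ell}{k-\ell}$ edges, so $\delta_\ell(H)$ is exactly this quantity. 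To realise the term $1/2$, take $V(H)=A\cup B$ with $|A|,|B|$ roughly equal but $|A|$ of a parity forbidding a perfect matching, and let $E(H)$ consist of all $k$-sets with an even number of vertices in $A$; summing over a matching shows no perfect matching can exist, and a parity argument on binomial sums gives $\delta_\ell(H)=(\tfrac12+o(1))\binom{n-\ell}{k-\ell}$. Taking the better of these two $k$-graphs yields the bound.

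For the upper bound one must show that, for all $\eps>0$ and large $n$ with $k\mid n$, every $n$-vertex $k$-graph $H$ with $\delta_\ell(H)\ge(\mu+\eps)\binom{n-\ell}{k-\ell}$ contains a perfect matching. I would run the absorption scheme of R\"odl, Ruci\'nski and Szemer\'edi in its lattice-based refinement. Step one is an absorbing lemma: produce a matching $M_0$ with $|V(M_0)|=o(n)$ such that $H[V(M_0)\cup W]$ has a perfect matching for every $W\subseteq V(H)\sm V(M_0)$ with $k\mid|W|$ and $|W|\le\gamma n$. Step two is an almost perfect matching lemma: since $\delta_\ell(H)$ beats the space-barrier value $1-((k-1)/k)^{k-\ell}$, both $H$ and $H-V(M_0)$ admit fractional perfect matchings, and via the weak hypergraph regularity method (or a direct random-greedy argument) this upgrades to an integer matching covering all but at most $\gamma n$ vertices. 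Step three applies the absorbing property of $M_0$ to the set of uncovered vertices, whose size is a multiple of $k$ and at most $\gamma n$, to close up a perfect matching.

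The main obstacle is the absorbing lemma, where the parity construction resurfaces as a divisibility obstruction: the local absorbers one builds for individual $k$-tuples of vertices amalgamate into a single global absorber only when the edge lattice generated by the links is full, and ruling out a proper sublattice is exactly what the surplus over $\mu$ should provide. Carrying this out uniformly over all $1\le\ell<k$ is genuinely hard. At the codegree end $\ell=k-1$ the original R\"odl--Ruci\'nski--Szemer\'edi argument already settles the conjecture, and for $\ell$ not too small the geometric and lattice-absorption machinery developed for matching thresholds by Keevash--Mycroft and Han applies; but at the vertex-degree end the almost perfect matching step and the construction of absorbers are both delicate, and indeed $m_{1,k}$ is currently known only for $k\le 5$. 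A complete proof would therefore require a new idea for small $\ell$, presumably a sharper handle on fractional perfect matchings and on the link lattices of $k$-graphs sitting just above the space barrier.
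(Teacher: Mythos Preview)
The statement you are attempting to prove is not a theorem in the paper but a \emph{conjecture}: the paper explicitly labels it as such and notes immediately afterward that $m_{1,k}$ is known only for $k\le 5$. There is therefore no proof in the paper to compare against, and indeed no proof is known.

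Your proposal reflects this accurately. The lower bound via the space barrier and parity barrier constructions is correct and standard. For the upper bound you outline the absorption/lattice machinery and then candidly concede that ``a complete proof would therefore require a new idea for small $\ell$''. That concession is exactly the point: what you have written is not a proof but a survey of the known partial results and techniques, together with an identification of where they run out. In particular, your Step~2 (almost perfect matching from a fractional one once $\delta_\ell$ beats the space barrier) and the lattice-completeness step in the absorbing lemma are precisely the places where current methods fail to close the gap for general $\ell=1$; you have located the obstruction but not removed it. So the proposal should be read as an informed discussion of an open problem rather than a proof attempt, and there is nothing in the paper that does more.
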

	
	What can we say about colour-bias in perfect matchings? For codegrees, this was solved by the above mentioned work on tight Hamilton cycles~\cite[Corollary~1.2]{GGS23} as well as by Balogh, Treglown and Z\'arate-Guer\'en~\cite[Theorem~1.3]{BTZ24}.  
    In fact, this latter work  determined the asymptotically optimal minimum $\ell$-degree condition for forcing a colour-bias perfect matching in an $r$-edge-coloured $k$-graph on $n$ vertices whenever $2 \leq \ell < k$ and $r\geq 2$.
	Their result shows that a relative minimum $\ell$-degree of $m_{\ell,k} + \eps$ forces a perfect matching containing at least $n/(kr)+\gamma n$ monochromatic edges, where $\gamma>0$ is small with respect to $\eps$, $k$ and $r$.
	In other words, colour-bias perfect matchings are born at the same time as perfect matchings when $2 \leq \ell < k$.
	
	\subsection*{Results}
	
	We study this problem for \emph{all} choices of $\ell$.
	Our main result states that the minimum $\ell$-degree threshold for the existence of a colour-bias perfect matching for any $r$-edge-colouring is the maximum between the existence threshold $m_{\ell,k}$ and the maximum minimum $\ell$-degree of a certain family of $k$-graphs, which we will denote by $\F_{k,r}$.
	In particular, whilst we already mentioned that in 
 the range $2 \leq \ell < k$, 
 the existence threshold $m_{\ell,k}$ prevails,  for $\ell =1$ a more nuanced picture emerges.
	
	To describe our family of $k$-graphs $\F_{k,r}$, we need the following terminology.
	Let $H$ be a $k$-graph and $\{ V_1,V_2, \dotsc, V_r\}$ be an $r$-partition of $V(H)$.
	We say that an edge $e \in E(H)$ is of \emph{type} $\mathbf{j} = (j_1,j_2, \ldots, j_r)$ with respect to $(V_1, V_2, \ldots, V_r)$ if $|e \cap V_i|=j_i$ for all $i \in [r]$.
	Note that since every edge of $H$ has size $k$, it holds that $\sum_{i=1}^r j_i = k$ and $j_i \geq 0$ for each $i \in [r]$.
	We also let $\mathbf{e}_i \in \{0,1\}^r$ be the canonical vector with $1$ in the $i$th position and $0$ otherwise.
	We say that a pair $(\mathbf{j},\sigma)$, consisting of a vector $\mathbf j=(j_1, j_2, \dotsc, j_r) \in \mathbb{N}_0 ^r$ and $\sigma \in \{-1, 1\}$, is \emph{$k$-valid} if $\sigma + \sum_{i=1}^r j_i = k$, and $j_i + \sigma \geq 0$ for all $i \in [r]$.
	
	\begin{definition}\label{def:extremal-graphs}
		For $k, r \geq 2$, and given a $k$-valid pair $(\mathbf{j} ,\sigma)$, we say that an $r$-edge-coloured $k$-graph $H$ belongs to the family $\F^\ast_{k,r}(\mathbf{j}, \sigma)$ if there is a partition $\{V_1, \dotsc, V_r \}$ of $V(H)$ 
		such that, for each $i \in [r]$, the $i$-coloured edges of $H$ are   $k$-sets of type $\mathbf{j} + \sigma \mathbf{e}_i$
		with respect to $(V_1, \ldots, V_r)$. Note that this does not necessarily mean \emph{all} edges of type $\mathbf{j} + \sigma \mathbf{e}_i$ are present in $H$ though. If they are all present  for all choices of $i \in [r]$, we say that $H$ is \emph{edge-maximal}.
		
		We also let $\mathcal{F}_{k,r}(\mathbf{j}, \sigma) \subseteq \mathcal{F}^\ast_{k,r}(\mathbf{j}, \sigma)$ be the family of $k$-graphs $H$ where $n := |V(H)|$ is divisible by $kr$ and the corresponding vertex partition satisfies, for every $i \in [r]$, that $|V_i| = \frac{rj_i+\sigma}{rk}n$.

        We define $\F^\ast_{k,r} := \bigcup_{\mathbf{j}, \sigma} \F^\ast_{k,r} (\mathbf{j}, \sigma)$ and $\F_{k,r} := \bigcup_{\mathbf{j}, \sigma} \F_{k,r} (\mathbf{j}, \sigma)$, where both unions range over all  $k$-valid pairs $(\mathbf{j}, \sigma)$.
		
	\end{definition}
	
	The crucial property of this family is that for every $n$-vertex $k$-graph in $\F_{k,r}$, every perfect matching has exactly $n/(kr)$ edges in each colour (see \cref{lemma:colourproportion} for a proof, and \cref{figure:3uniform2colours} for an example).
	Thus, colour-bias in perfect matchings requires a minimum degree that exceeds the one found in the family $\F_{k,r}$.
	We therefore define \[f_{\ell,k,r}:= 
	\lim_{n \rightarrow \infty} \max_{\substack{H \in \F_{k,r} \\ |V(H)| = krn}} \frac{\delta_\ell(H)}{\binom{|V(H)| - \ell}{k - \ell}}.\]

	\begin{figure}[h]
		\centering
		\includegraphics[scale=0.9]{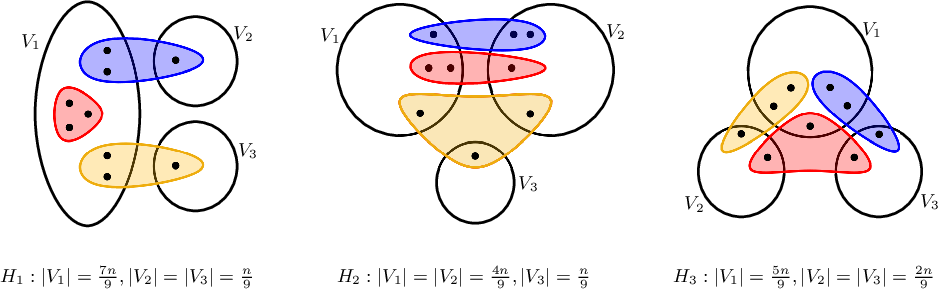}
		\caption{The edge-maximal  members of $\mathcal{F}_{k,r}$ for $k = r = 3$. Up to symmetry, there are three different cases, corresponding to different possible choices of $3$-valid pairs $(j_1, j_2, j_3) \in \N_0^3$, and $\sigma \in \{-1, +1\}$. The choices correspond to $(j_1, j_2, j_3, \sigma) \in \{ (2,0,0, +1), (1,1,0, +1), (2,1,1, -1)\}$ and are pictured as $H_1, H_2, H_3$, respectively.
			In each case, the red, blue and yellow edges represent the $1$-coloured, $2$-coloured, $3$-coloured edges, respectively.
			The minimum vertex degree of those three $3$-graphs are $(49/81 + o(1))\binom{n-1}{2}$, $(32/81 + o(1))\binom{n-1}{2}$ and $(5/9 + o(1))\binom{n-1}{2}$, respectively. The maximum $\delta_1(H_i)$ is attained by $\delta_1(H_1)$. This shows that $f_{1, 3, 3} = 49/81$.}
		\label{figure:3uniform2colours}
	\end{figure}
	
	Our main result states that every $r$-edge-coloured $k$-graph whose minimum $\ell$-degree forces a perfect matching and at the same time denies membership in $\F_{k,r}$ must already exhibit a colour-bias perfect matching.
	
	\begin{theorem}\label{thm:main}
	Let $\eps >0$ and $\ell,k, r \in \mathbb N$ where	 $1 \leq \ell < k$ and $r \geq 2$.  There exist $\gamma > 0$ and $n_0\in \mathbb N$ such that if $n \geq n_0$ is divisible by $k$, then every $r$-edge-coloured $k$-graph $H$ on $n$ vertices with 
		\[\delta_\ell(H) \geq (\max\{f_{\ell,k,r},\, m_{\ell,k}\} + \eps) \tbinom{n-\ell}{k-\ell}\]
		has a perfect matching containing at least ${n}/{(kr)}+ \gamma n$ edges of the same colour.
	\end{theorem}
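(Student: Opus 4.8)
The plan is to combine the absorption method with a stability analysis, via the following dichotomy: either $H$ already contains a perfect matching with more than $n/(kr)+\gamma n$ edges of one colour, or $H$ essentially belongs to the family $\F^\ast_{k,r}$, and then the $f_{\ell,k,r}$ term in the degree hypothesis forces a contradiction. Concretely, fix constants $\gamma\ll\mu\ll\eps$ (with $\gamma$ also small in terms of $k$ and $r$), and write $c(e)$ for the colour of $e$; we claim that if $H$ has no perfect matching with $n/(kr)+\gamma n$ monochromatic edges, then $H$ is \emph{$\mu$-structured}, meaning that there are a $k$-valid pair $(\mathbf j,\sigma)$ and a partition $\{V_1,\dots,V_r\}$ of $V(H)$ with $\big||V_i|-\tfrac{rj_i+\sigma}{rk}n\big|\le\mu n$ for every $i\in[r]$, such that all but at most $\mu n^k$ edges of $H$ are \emph{conforming}, i.e.\ each $i$-coloured edge has type $\mathbf j+\sigma\mathbf e_i$ with respect to $(V_1,\dots,V_r)$.

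Granting this, the theorem follows. Assume $H$ has no perfect matching with $n/(kr)+\gamma n$ monochromatic edges, so $H$ is $\mu$-structured with respect to some $(\mathbf j,\sigma)$ and $\{V_1,\dots,V_r\}$; let $H_0$ be the edge-maximal member of $\F^\ast_{k,r}(\mathbf j,\sigma)$ on $V(H)$ with these part sizes. Each proportion $\tfrac{rj_i+\sigma}{rk}$ is a positive constant, so every $|V_i|=\Omega(n)$; since $\deg_{H_0}(S)$ depends, up to lower-order terms, only on the vector $(|S\cap V_1|,\dots,|S\cap V_r|)$, the $\ell$-sets $S$ realising a minimiser of this vector form a family of size $\Omega(n^\ell)$, all satisfying $\deg_{H_0}(S)\le(f_{\ell,k,r}+O(\mu))\binom{n-\ell}{k-\ell}$ (the part sizes lie within $\mu n$ of those of a member of $\F_{k,r}(\mathbf j,\sigma)$, and $f_{\ell,k,r}$ is the supremum of such minimum $\ell$-degrees over $\F_{k,r}$). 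As there are at most $\binom{k}{\ell}\mu n^k$ incidences between $\ell$-sets and non-conforming edges, for $\mu$ small at least one such $S_0$ lies in at most $\tfrac{\eps}{2}\binom{n-\ell}{k-\ell}$ non-conforming edges; since every conforming edge of $H$ through $S_0$ is an edge of $H_0$,
\[
\deg_H(S_0)\le\deg_{H_0}(S_0)+\tfrac{\eps}{2}\binom{n-\ell}{k-\ell}\le\Big(f_{\ell,k,r}+O(\mu)+\tfrac{\eps}{2}\Big)\binom{n-\ell}{k-\ell}<\big(f_{\ell,k,r}+\eps\big)\binom{n-\ell}{k-\ell}\le\delta_\ell(H)\le\deg_H(S_0),
\]
a contradiction. (For $2\le\ell<k$ one has $\max\{f_{\ell,k,r},m_{\ell,k}\}=m_{\ell,k}$, so the threshold reduces to $m_{\ell,k}+\eps$ and the statement recovers the result of Balogh, Treglown and Z\'arate-Guer\'en; the new content is the case $\ell=1$, where $f_{1,k,r}$ can strictly dominate.)

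It remains to prove the dichotomy, which is the technical heart. Since $\delta_\ell(H)\ge(m_{\ell,k}+\eps)\binom{n-\ell}{k-\ell}$, one has (via now-standard absorption together with a transference, available in this degree range, from near-optimal fractional perfect matchings to integral ones preserving each colour count up to $o(n)$; alternatively one argues throughout with an integral near-perfect matching maximising the number of $i$-coloured edges and uses switchings) that $H$ contains, for each colour $i$, a perfect matching with at least $(1-o(1))$ times the maximum, over all \emph{fractional} perfect matchings $x$ of $H$, of $\sum_{e:\,c(e)=i}x_e$; hence if no perfect matching of $H$ has $n/(kr)+\gamma n$ monochromatic edges then this maximum $M_i^\ast$ obeys $M_i^\ast<n/(kr)+2\gamma n$ for all $i$. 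By linear programming duality there are functions $y^{(i)}\colon V(H)\to\REALS$, bounded in terms of $k$ and $r$, with $\sum_{v\in e}y^{(i)}_v\ge\mathbbm{1}[c(e)=i]$ for all $e$ and $\sum_v y^{(i)}_v=M_i^\ast$. Put $z:=\sum_{i\in[r]}y^{(i)}$, so $\sum_{v\in e}z_v\ge1$ for every $e$ while $\sum_v z_v<n/k+2r\gamma n$; since $\sum_v z_v=\sum_e x_e\sum_{v\in e}z_v\ge n/k$ for any fractional perfect matching $x$, the function $z$ is almost-optimal, which forces $z_v\approx1/k$ for all but $o(n)$ vertices and $\sum_{v\in e}z_v\approx1$ for all but $o(n^k)$ edges; combining this with $\sum_{v\in e}y^{(i)}_v\ge\mathbbm{1}[c(e)=i]$ shows that all but $o(n^k)$ edges $e$, of colour $j$ say, satisfy $\sum_{v\in e}y^{(j)}_v\approx1$ and $\sum_{v\in e}y^{(i)}_v\approx0$ for $i\ne j$. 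Clustering the vertices by their profile $(y^{(1)}_v,\dots,y^{(r)}_v)$ and using the high minimum degree, one shows these profiles concentrate into $r$ clusters $V_1,\dots,V_r$ (one per colour) whose profile vectors are in general position --- any collision of profile-sums between two distinct type vectors would, via a switching between the corresponding edges of a near-perfect matching, produce a perfect matching with the desired colour-bias --- so that a typical $i$-coloured edge must distribute across $(V_1,\dots,V_r)$ exactly as $\mathbf j+\sigma\mathbf e_i$ for a single $k$-valid pair $(\mathbf j,\sigma)$, which is precisely $\mu$-structuredness. The hard part is making this rigorous: controlling the propagation of the error terms, handling the $o(n)$ vertices of atypical profile, checking that the $r$ dual certificates are mutually consistent so as to output a single pair $(\mathbf j,\sigma)$, and deriving the size bounds $\big||V_i|-\tfrac{rj_i+\sigma}{rk}n\big|\le\mu n$ --- which hold because a structured graph with off-balance part sizes admits a biased near-perfect matching (again by rounding and absorption), since the colour distribution of any perfect matching of an off-balance member of $\F^\ast_{k,r}$ is forced away from uniform, as in the computation behind \cref{lemma:colourproportion}.
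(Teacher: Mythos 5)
Your second paragraph (the derivation of a contradiction from the degree hypothesis once one knows $H$ is $\mu$-structured) is sound in outline and closely parallels \cref{lem:continuity} in the paper. The problem is the first step: the dichotomy ``no colour-biased perfect matching $\Rightarrow$ $\mu$-structured'' is exactly what needs to be proved, and the LP-duality plan you sketch for it has gaps that are not cosmetic. First, because the perfect-matching constraints $\sum_{e\ni v}x_e=1$ are equalities, the dual variables $y^{(i)}_v$ are unconstrained in sign and there is no a priori reason they (or their sum $z$) can be taken bounded; consequently $\sum_v z_v\approx n/k$ does \emph{not} force $z_v\approx 1/k$ for most $v$ --- a few very negative entries can cancel a few very large ones. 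Second, ``transference, available in this degree range, from near-optimal fractional perfect matchings to integral ones preserving each colour count up to $o(n)$'' is not a black-box theorem; obtaining integral perfect matchings here already requires absorption, and carrying a per-colour bound through is a genuine extra requirement, not something one gets for free. Third, the clustering step --- showing the profile vectors $(y^{(1)}_v,\dots,y^{(r)}_v)$ concentrate into $r$ clusters, with a single $k$-valid pair $(\mathbf j,\sigma)$ explaining all colours --- is justified in your sketch by invoking ``a switching between the corresponding edges of a near-perfect matching'', which is precisely the missing gadget: you are implicitly assuming a switcher-type lemma inside the argument that was supposed to establish the structure in the first place.

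The paper takes a different and considerably simpler route that avoids all of this. It isolates the gadget you gesture at --- a \emph{switcher}, the union of two vertex-disjoint matchings $M_1,M_2$ on the same vertex set with $M_1$ having more $i$-coloured edges --- and proves an \emph{exact} structural statement (\cref{lem:main}): if every pair of vertices has $\ge k^2$ disjoint common neighbours and $H$ admits no switcher of order at most $k^2+k$, then $H\in\F^\ast_{k,r}$ precisely. The main theorem then follows by a short two-case argument: take a maximal collection $\mathcal S$ of vertex-disjoint small switchers; if $|\mathcal S|$ is large, delete it, find a perfect matching of the rest (the degree condition survives), and glue in the majority-state matchings to create a colour-biased perfect matching; if $|\mathcal S|$ is small, delete it, and the remainder $H'$ has no small switchers and still large common neighbourhoods, so $H'\in\F^\ast_{k,r}$ exactly by the Key Lemma, whence \cref{lem:continuity} bounds $\delta_\ell(H')$ by $(f_{\ell,k,r}+\eps/3)\binom{n-\ell}{k-\ell}$, contradicting the hypothesis. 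No LP duality, no fractional relaxation, no regularity, no ``$\mu$-structured'' approximation: the structure is exact, which is both cleaner and what makes the degree comparison at the end airtight. If you want to pursue your route, the honest way to close the gap is to extract and prove the switching lemma on its own --- and at that point you will essentially have rediscovered \cref{lem:main}, after which the LP machinery becomes unnecessary.
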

Our result answers a question posed by
Gishboliner, Glock and Sgueglia. In~\cite[Section 8]{GGS23} they  asked for which values of $\ell, k, r$ the 
 minimum $\ell$-degree threshold for colour-bias
  perfect matchings in $r$-edge-coloured $k$-graphs  exceeds the corresponding existence threshold.
  Theorem~\ref{thm:main} tells us that this is the case precisely when 
  $f_{\ell,k,r}>m_{\ell,k}$.

As discussed earlier, Balogh,  Treglown and Z\'arate-Guer\'en~\cite{BTZ24} showed that, for $2\leq \ell <k$ and $r \geq 2$, the colour-bias perfect matching threshold coincides with the (ordinary) threshold for perfect matchings $m_{\ell,k}$.
  It is not hard to see that \Cref{thm:main} recovers this  result.

	\begin{observation} \label{obs:btz}
        For all $r \geq 2$ and $2 \leq \ell < k$, we have $f_{\ell, k, r} \leq m_{\ell, k}$.
    \end{observation}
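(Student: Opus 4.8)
I would derive the observation from two facts: $m_{\ell,k}\geq 1/2$ for every $\ell<k$, and $f_{\ell,k,r}\leq 1/2$ whenever $\ell\geq 2$. The first is classical: take $V=A\cup B$ with $|A|$ the odd integer nearest $n/2$, and let $H_0$ have edge set all $k$-subsets of $V$ meeting $A$ in an even number of vertices; then $H_0$ has no perfect matching (summing $|e\cap A|$ over a matching would express the odd number $|A|$ as a sum of even numbers), while $\delta_\ell(H_0)=(1/2-o(1))\binom{n-\ell}{k-\ell}$, so $m_{\ell,k}\geq 1/2$. Since the degree threshold in \cref{thm:main} is $\max\{f_{\ell,k,r},m_{\ell,k}\}$, it remains to prove $f_{\ell,k,r}\leq 1/2$. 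Fix a $k$-valid pair $(\mathbf j,\sigma)$ with $\F_{k,r}(\mathbf j,\sigma)\neq\emptyset$ and, for large $n$, let $H$ be the edge-maximal member of $\F_{k,r}(\mathbf j,\sigma)$ on $N:=krn$ vertices, with parts $V_1,\dots,V_r$ of sizes $|V_i|=(rj_i+\sigma)n$. As adding edges cannot decrease $\delta_\ell$ and $\F_{k,r}$ is a union of finitely many families $\F_{k,r}(\mathbf j,\sigma)$, it is enough to bound $\delta_\ell(H)$.

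By \cref{def:extremal-graphs}, a $k$-set is an edge of $H$ precisely when its type with respect to $(V_1,\dots,V_r)$ is one of $\mathbf j+\sigma\mathbf e_1,\dots,\mathbf j+\sigma\mathbf e_r$. Hence, for an $\ell$-set $L$ of type $\mathbf t$, a disjoint $(k-\ell)$-set $S$ forms an edge with $L$ exactly when the type of $S$ lies in $\cT(\mathbf t):=\{\mathbf j+\sigma\mathbf e_i-\mathbf t:i\in[r]\}$, and therefore
\[
\deg_H(L)=\sum_{\mathbf s\in\cT(\mathbf t)}\prod_{i=1}^r\binom{|V_i\setminus L|}{s_i}\ \leq\ \sum_{\mathbf s\in\cT(\mathbf t)}\prod_{i=1}^r\binom{|V_i|}{s_i}.
\]
I would apply this to an $\ell$-set $L_1\subseteq V_1$ and an $\ell$-set $L_2\subseteq V_2$ (both exist once $n\geq\ell$), which have types $\ell\mathbf e_1$ and $\ell\mathbf e_2$. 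A vector common to $\cT(\ell\mathbf e_1)$ and $\cT(\ell\mathbf e_2)$ would give $\sigma\mathbf e_a-\ell\mathbf e_1=\sigma\mathbf e_b-\ell\mathbf e_2$ for some $a,b\in[r]$, that is, $\ell(\mathbf e_1-\mathbf e_2)=\sigma(\mathbf e_a-\mathbf e_b)$; but $\onenorm{\ell(\mathbf e_1-\mathbf e_2)}=2\ell\geq 4$ whereas $\onenorm{\sigma(\mathbf e_a-\mathbf e_b)}\leq 2$, a contradiction. Thus $\cT(\ell\mathbf e_1)$ and $\cT(\ell\mathbf e_2)$ are disjoint, and using the Vandermonde identity $\sum_{\mathbf s}\prod_{i=1}^r\binom{|V_i|}{s_i}=\binom{N}{k-\ell}$ (the sum over all $\mathbf s\in\NATSZ^r$ with $\sum_i s_i=k-\ell$) we obtain $\deg_H(L_1)+\deg_H(L_2)\leq\binom{N}{k-\ell}$. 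Hence $\delta_\ell(H)\leq\min\{\deg_H(L_1),\deg_H(L_2)\}\leq\tfrac12\binom{N}{k-\ell}$; since $\binom{N}{k-\ell}/\binom{N-\ell}{k-\ell}\to 1$ as $n\to\infty$, maximising over the finitely many pairs $(\mathbf j,\sigma)$ gives $f_{\ell,k,r}\leq 1/2$, as required.

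The crux is the disjointness of $\cT(\ell\mathbf e_1)$ and $\cT(\ell\mathbf e_2)$, and this is precisely where the hypothesis $\ell\geq 2$ is used: for $\ell=1$ and $\sigma=1$ one has $\mathbf j\in\cT(\mathbf e_1)\cap\cT(\mathbf e_2)$, and indeed $f_{1,k,r}$ may strictly exceed $1/2$ (for instance $f_{1,3,3}=49/81$, cf.\ \cref{figure:3uniform2colours}), so no argument of this shape can work in that range. Everything else above — the description of $E(H)$ coming straight from \cref{def:extremal-graphs}, the displayed degree count, and the asymptotics of the binomial ratio — is routine.
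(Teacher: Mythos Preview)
Your proof is correct and essentially the same as the paper's: both show $f_{\ell,k,r}\leq 1/2\leq m_{\ell,k}$ by observing that an $\ell$-set inside $V_1$ and one inside $V_2$ have disjoint links, since a common $(k-\ell)$-extension would produce two edges whose types differ by $\ell(\mathbf e_1-\mathbf e_2)$, impossible for $\ell\geq 2$. The paper packages this as a pigeonhole contradiction at $\ell=2$ (if $\delta_2>\tfrac12\binom{n}{k-2}$ then two pairs in different parts share a link $(k-2)$-set), while you count types directly and apply Vandermonde, but the underlying observation is identical.
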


    \begin{proof}
        Let $r, \ell, k$ be as in the statement.
        Since $m_{\ell, k} \geq m_{k-1, k} = \frac{1}{2}$, it suffices to show that $f_{\ell, k, r} \leq \frac{1}{2}$.
        Suppose otherwise.
        This implies  that there exists an $n \geq 2rk$, a $k$-valid $r$-tuple $(\mathbf{j}, \sigma)$, and an $n$-vertex $k$-graph $H \in \mathcal{F}_{k,r}(\mathbf{j}, \sigma)$ such that $\delta_2(H) > \frac{1}{2} \binom{n}{k-2}$.
        Since $H \in \mathcal{F}_{k,r}(\mathbf{j}, \sigma)$, there exists a  partition $\{V_1, \dotsc, V_r\}$ of $V(H)$ such that $|V_i| = (r j_i + \sigma)n/(rk) \geq n/rk \geq 2$ for all $i \in [r]$.
        Let $\{x_1, y_1\} \subseteq V_1$ and $\{x_2, y_2\} \subseteq V_2$.
        By the minimum $2$-degree condition, there exists a $(k-2)$-set $S$ such that both $e_1 = \{x_1, y_1\} \cup S$ and $e_2 \{x_2, y_2\} \cup S$ are edges in $H$.
        Then $|e_1 \cap V_1| = |e_2 \cap V_1| + 2$, $|e_1 \cap V_2| = |e_2 \cap V_2| - 2$, and $|e_1 \cap V_i| = |e_2 \cap V_i|$ for all $3 \leq i \leq r$.
        But then it is not true that all edges of $H$ are of type $\mathbf{j} + \sigma \mathbf{e}_i$ (for some $i$) with respect to $(V_1, \dotsc, V_r)$, a contradiction.
    \end{proof}
  
  However, as we will see, for $\ell=1$ the inequality $f_{1, k, r} > m_{\ell, k}$ can hold, depending on the values of $k$ and $r$.
  In light of \Cref{thm:main}, to determine the minimum vertex degree threshold for colour-bias perfect matchings, we need to understand the values of $f_{1,k,r},$ denoted by $f_{k,r}$ for short.
  
	
	Given $k$ and $r$, consider the edge-maximal $r$-edge-coloured $k$-graph in $\F_{k,r}$ generated by the $k$-valid pair $(\mathbf{j}, \sigma)$ where $\mathbf{j} = (k-1, 0, \dotsc, 0)$ and $\sigma = 1$.
	This $k$-graph consists of $r$ parts, the first $V_1$ of size $(kr - r + 1)n/kr$ and all of the others $V_2, \dotsc, V_r$ of size $n/kr$; the $i$-coloured edges contain $k-1$ vertices in $V_1$ and the remaining one in $V_i$, for each $1 \leq i \leq k$.
	(In \Cref{figure:3uniform2colours}, this $k$-graph is depicted as $H_1$ in the case $k =r= 3$.)
	In fact, this specific $k$-graph is the one that maximises the minimum vertex degree in the definition of $f_{k, r}$, except in a couple of cases.
	
	\begin{lemma} \label{lemma:maximiser}
        For each $k, r \geq 2$ such that $(k,r) \not\in \{ (3,2), (4,2) \}$, we have
		\[ f_{k,r} = \left( 1- \frac{r-1}{kr} \right)^{k-1}. \]
	\end{lemma}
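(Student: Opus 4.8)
The lower bound $f_{k,r}\ge\bigl(1-\tfrac{r-1}{kr}\bigr)^{k-1}$ is witnessed by the explicit construction described above the statement: let $H_1$ be the edge-maximal member of $\F_{k,r}(\mathbf j,1)$ with $\mathbf j=(k-1,0,\dotsc,0)$, so that $|V_1|=\bigl(1-\tfrac{r-1}{kr}\bigr)n$ and $|V_i|=\tfrac{n}{kr}$ for $2\le i\le r$. For each $i\ge 2$, the edges through a vertex $v\in V_i$ are precisely the $i$-coloured ones, i.e.\ the $(k-1)$-subsets of $V_1$ together with $v$; there are $\binom{|V_1|}{k-1}=(1+o(1))\bigl(1-\tfrac{r-1}{kr}\bigr)^{k-1}\binom{n-1}{k-1}$ of these, and a vertex of $V_1$ lies in at least as many $1$-coloured edges, so $\delta_1(H_1)=(1+o(1))\bigl(1-\tfrac{r-1}{kr}\bigr)^{k-1}\binom{n-1}{k-1}$. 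For the matching upper bound, fix any $k$-valid pair $(\mathbf j,\sigma)$ and let $H$ be the edge-maximal member of $\F_{k,r}(\mathbf j,\sigma)$ on $n$ vertices, with parts $V_1,\dotsc,V_r$ of sizes $|V_i|=\alpha_i n$ where $\alpha_i:=\tfrac{rj_i+\sigma}{kr}$. All vertices of a fixed part $V_s$ have the same degree in $H$, and counting the ways to extend a vertex $v\in V_s$ to an $i$-coloured edge (a $k$-set of type $\mathbf j+\sigma\mathbf e_i$) and summing over $i\in[r]$ shows that this degree equals $(1+o(1))\,c_s\binom{n-1}{k-1}$, where
\[
c_s=\sum_{i=1}^{r}\binom{k-1}{\mathbf j+\sigma\mathbf e_i-\mathbf e_s}\;\prod_{t=1}^{r}\alpha_t^{\,(\mathbf j+\sigma\mathbf e_i-\mathbf e_s)_t},
\]
with the convention that a summand is $0$ when the displayed vector has a negative entry. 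Hence $\delta_1(H)=(1+o(1))\,(\min_s c_s)\binom{n-1}{k-1}$, and since there are only finitely many $k$-valid pairs we get $f_{k,r}=\max_{(\mathbf j,\sigma)}\min_s c_s$; so it remains to show $\min_s c_s\le\bigl(1-\tfrac{r-1}{kr}\bigr)^{k-1}$ for every $k$-valid pair, under the hypothesis $(k,r)\notin\{(3,2),(4,2)\}$.

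Consider first the case $\sigma=1$ with $j_s=0$ for some $s$. Then every edge through $v\in V_s$ is $s$-coloured (all other colour classes miss $V_s$ entirely), so $c_s$ equals the single term $\binom{k-1}{(j_i)_{i\ne s}}\prod_{i\ne s}\alpha_i^{j_i}$. Letting $m_1,\dotsc,m_t$ be the positive entries of $(j_i)_{i\ne s}$ (so $m_b\ge 1$ and $\sum_b m_b=k-1$) and substituting $\alpha_i=\tfrac{rm_b+1}{kr}$, the inequality $c_s\le\bigl(1-\tfrac{r-1}{kr}\bigr)^{k-1}$ rewrites as
\[
\binom{k-1}{m_1,\dotsc,m_t}\prod_{b=1}^{t}(rm_b+1)^{m_b}\;\le\;\bigl(r(k-1)+1\bigr)^{k-1}.
\]
The plan is to prove this by induction on $t$: for $t=1$ it is an equality, and the inductive step, merging two parts $m_a,m_b$ into the single part $m_a+m_b$, reduces the statement to the elementary inequality
\[
\binom{a+b}{a}(ra+1)^a(rb+1)^b\;\le\;\bigl(r(a+b)+1\bigr)^{a+b}\qquad(a,b\ge 1,\ r\ge 2),
\]
which I would verify by Stirling-type estimates after checking the finitely many small values of $a+b$ by hand. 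In particular, the target value is never exceeded in this case; note that for $r=2$ this case only produces $H_1$ itself (up to swapping the two colours), which explains why neither $(3,2)$ nor $(4,2)$ arises here.

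It remains to treat $\sigma=1$ with all $j_i\ge 1$ (so $r\le k-1$) and $\sigma=-1$ (so all $j_i\ge 1$ and $r\le k+1$). Now $c_s$ has more than one nonvanishing term. Isolating the $i=s$ summand and simplifying the multinomial coefficients, one can write $c_s$ as $\bigl(\prod_t\alpha_t^{j_t}\bigr)$ times a bracketed factor that is monotone increasing in $j_s$; choosing $s$ with $j_s$ minimal and feeding in the bound on $\prod_t(rj_t+1)^{j_t}$ from the previous case reduces the problem to a single inequality in $j_1,\dotsc,j_r$ subject to $\sum_i j_i=k-\sigma$, which can be resolved by elementary (if tedious) estimates. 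A direct analysis shows this inequality holds for all admissible data except $(k,r)\in\{(3,2),(4,2)\}$, where it fails: for $(\mathbf j,\sigma)=\bigl((1,1),1\bigr)$ one computes $\min_s c_s=\tfrac34>\tfrac{25}{36}$, and for $(\mathbf j,\sigma)=\bigl((2,1),1\bigr)$ one computes $\min_s c_s=\tfrac{175}{256}>\tfrac{343}{512}$. Combining this with the previous case and the lower bound yields the claimed value of $f_{k,r}$.

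The crux of the argument is this last step. Unlike the first case, where a single induction handles all $(k,r)$ uniformly, the cases with all $j_i\ge1$ do not seem to admit one clean argument, and pinning down exactly the two exceptional pairs $(3,2)$ and $(4,2)$ will require a careful case distinction.
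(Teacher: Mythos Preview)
Your lower bound and your derivation of the formula $c_s$ are fine and essentially match the paper's \cref{lemma:fkr-reduction1}. The inductive merging argument for the case $\sigma=1$ with some $j_s=0$ is a nice idea not in the paper, and the reduction to the inequality $\binom{a+b}{a}(ra+1)^a(rb+1)^b\le(r(a+b)+1)^{a+b}$ is correct; however, you have not actually proved this inequality, only announced that Stirling plus finite checking would do it.

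The genuine gap is the second case (all $j_i\ge 1$, either sign of $\sigma$). You yourself say this is ``the crux'' and that it ``will require a careful case distinction'', but you provide no such distinction---only the assertion that ``a direct analysis shows'' the inequality holds outside $(3,2),(4,2)$. This is precisely where all the work lies, and your proposal gives no mechanism for isolating exactly those two pairs. Writing $c_s$ as $\prod_t\alpha_t^{j_t}$ times a bracketed factor and ``feeding in the bound from the previous case'' does not obviously go through: the previous case bounded a \emph{different} expression (a single multinomial term, not a sum over $i$), and the dependence on the minimal $j_s$ interacts with the bracketed sum in a way you have not controlled.

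For comparison, the paper does not resolve this case by a uniform analytic argument either. It first removes the minimum over $s$ by showing it is always attained at the smallest $j_s$ (\cref{lemma:fkr-reduction2}), then uses Stirling-type bounds (\cref{lemma:robbins}) to prove the inequality whenever $k\ge 23$ or $r\ge 5$ (\cref{lemma:maximiser-weak}), with several sub-cases depending on $s$, $r$, and the size of $j_2,j_3$. The finitely many remaining pairs $(k,r)$ with $3\le k\le 22$ and $2\le r\le 4$ are then handled by a computer search over all $k$-valid $(\mathbf{j},\sigma)$. If you want a fully analytic proof avoiding the computer check, you would have to do substantially more than you have sketched.
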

    We obtain this formula in Section~\ref{sec:numer} as a consequence of \cref{lemma:fkr-reduction1,lemma:fkr-reduction2} and a simple computer search.
    The exceptional cases $k \in \{3, 4\}$ and  $r=2$ are resolved as follows.
    \begin{lemma} \label{lemma:maximiser-remaining}
        $f_{3,2} = 3/4$ and $f_{4,2} = 175/256$.
    \end{lemma}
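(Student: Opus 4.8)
The proof is a bounded case analysis. Fix $(k,r)\in\{(3,2),(4,2)\}$. Since $r=2$, a $k$-valid pair $(\mathbf{j},\sigma)$ with $\mathbf{j}=(j_1,j_2)$ satisfies $j_1+j_2=k-1$ and $j_1,j_2\geq 0$ when $\sigma=+1$, and $j_1+j_2=k+1$ and $j_1,j_2\geq 1$ when $\sigma=-1$. Swapping the roles of $V_1$ and $V_2$ shows that, as an (uncoloured) $k$-graph, the edge-maximal member of $\mathcal F_{k,r}((j_1,j_2),\sigma)$ is isomorphic to that of $\mathcal F_{k,r}((j_2,j_1),\sigma)$, so we may assume $j_1\geq j_2$; this leaves, for $k=3$, the pairs $((2,0),+1),((1,1),+1),((3,1),-1),((2,2),-1)$, and, for $k=4$, the pairs $((3,0),+1),((2,1),+1),((4,1),-1),((3,2),-1)$. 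As $\delta_1$ only increases when edges are added, for each pair and each admissible vertex count the maximum of $\delta_1(H)$ over $H\in\mathcal F_{k,r}(\mathbf{j},\sigma)$ is attained by the edge-maximal member $H^{(\mathbf{j},\sigma)}_n$ on $krn$ vertices; writing $f_{k,r}(\mathbf{j},\sigma):=\lim_{n\to\infty}\delta_1(H^{(\mathbf{j},\sigma)}_n)/\binom{krn-1}{k-1}$, we obtain $f_{k,r}=\max_{(\mathbf{j},\sigma)}f_{k,r}(\mathbf{j},\sigma)$, the maximum ranging over the four pairs above.

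It then remains to evaluate $f_{k,r}(\mathbf{j},\sigma)$ for these eight pairs. In $H^{(\mathbf{j},\sigma)}_n$ the parts have sizes $|V_t|=\tfrac{rj_t+\sigma}{rk}\cdot krn=(rj_t+\sigma)n$, and the colour-$i$ edges are exactly the $k$-sets of type $\mathbf{c}^{(i)}:=\mathbf{j}+\sigma\mathbf{e}_i$; hence the number of edges through a vertex $v\in V_t$ equals $\sum_{i\in[r]}\binom{|V_t|-1}{c^{(i)}_t-1}\prod_{s\neq t}\binom{|V_s|}{c^{(i)}_s}$, with the $i$-th summand read as $0$ when $c^{(i)}_t=0$, a polynomial in $n$ of degree $k-1$. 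Dividing by $\binom{krn-1}{k-1}$ and letting $n\to\infty$ yields an explicit rational number $\rho_t(\mathbf{j},\sigma)$ for each $t\in\{1,2\}$, and $f_{k,r}(\mathbf{j},\sigma)=\min_{t}\rho_t(\mathbf{j},\sigma)$. Carrying out these routine computations — by hand, or with a two-line computer check — gives, for $k=3$, the values $25/36$, $3/4$, $25/36$, $3/4$ for the four pairs above, whence $f_{3,2}=3/4$; and, for $k=4$, the values $343/512$, $175/256$, $343/512$, $175/256$, whence $f_{4,2}=175/256$.

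There is no genuine obstacle here: the whole argument is a finite enumeration followed by leading-order binomial asymptotics. The main things to be careful about are checking that the list of $k$-valid pairs is complete and that the $V_1\leftrightarrow V_2$ reduction is applied correctly, and, for each pair, correctly determining which of the two vertex types minimises the degree (it is often, but not always, the smaller part) before reading off the leading coefficient — an off-by-one in a binomial index, or a sign slip in $\mathbf{c}^{(i)}=\mathbf{j}+\sigma\mathbf{e}_i$, would corrupt the final fraction. As a sanity check, the pair $((k-1,0),+1)$ gives the ratio $\bigl(1-\tfrac{r-1}{kr}\bigr)^{k-1}$ from \cref{lemma:maximiser}, namely $25/36$ for $(k,r)=(3,2)$ and $343/512$ for $(k,r)=(4,2)$; the content of the present lemma is precisely that in these two exceptional cases the balanced pair — $((1,1),+1)$ for $k=3$ and $((2,1),+1)$ for $k=4$ — strictly exceeds this generic value.
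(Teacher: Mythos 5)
Your proof is correct, and it is essentially the same argument the paper uses: the paper derives a general formula for $f_{(\mathbf{j},\sigma)}$ (their Lemma 4.1) and then checks the finitely many $k$-valid pairs for $(k,r)\in\{(3,2),(4,2)\}$ by computer, reading the answers off Table 1; you simply re-derive that formula directly for $r=2$ by counting edges through a vertex in each part, enumerate the same four pairs per $k$ (with the same $V_1\leftrightarrow V_2$ symmetry reduction), and evaluate the resulting rationals, arriving at the identical values $25/36,\,3/4$ for $k=3$ and $343/512,\,175/256$ for $k=4$.
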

    We remark that $f_{3,2} = 3/4$ is attained by the edge-maximal $2$-edge-coloured graph in $\mathcal{F}_{3,2}$ generated by the $3$-valid pair $(\mathbf{j}, \sigma)$ with $\mathbf{j} = (1,1)$ and $\sigma = 1$; and $f_{4,2} = 175/256$ is attained by the edge-maximal $2$-edge-coloured graph in $\mathcal{F}_{4,2}$ generated by the $4$-valid pair $(\mathbf{j}, \sigma)$ with $\mathbf{j} = (2,1)$ and $\sigma = 1$.

    Next, we state a few direct consequences of Theorem~\ref{thm:main} and the previous two lemmata, obtained via some simple calculations.
    Abbreviate the vertex-degree threshold for perfect matchings by $m_k=m_{1,k}$, and let $b_{k,r}=\max\{f_{k,r},m_k\}$ denote the minimum vertex degree threshold for colour-bias perfect matchings.

    For graphs, that is when $k=2$, and any $r \geq 3$, the inequality $f_{2, r} = (r+1)/(2r) > 1/2 = m_{2}$ holds (see \cref{lemma:maximizer-graph}).
    This implies that $b_{2,r} = (r+1)/(2r)$, and  recovers the known graph results, which follow from the work on Hamilton cycles~\cite{FHL+21, GKP22}.
    
    Using Lemma~\ref{lemma:maximiser-remaining} and the known values of $m_3$ and $m_4$, together with Theorem~\ref{thm:main} we have that $b_{3,2} = 3/4$ and $b_{4,2} = 175/256$.
	The case $(k,r) = (3,2)$ solves a question posed in \cite[Question 4.2]{BTZ24}.
%
	
    For general values of $k,r$, we can decide whether the ordinary and colour-biased thresholds coincide (formally $b_{k,r} = m_k$) by checking whether $f_{k,r} < m_k$.
	The next corollary summarises our findings for two colours.
	
\begin{corollary}\label{cor:vertex}
 If \Cref{conjecture:matchingthresholds} is true, then
 $b_{k,2}>m_k$ if and only if $2\leq k\leq16$. 
 Unconditionally, we have $b_{k,2} > m_k$ for
 $2 \leq k \leq 5$   and $b_{k,2} = m_{k}$ for $k \geq 17$. 
	\end{corollary}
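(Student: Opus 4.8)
The plan is to reduce the statement to a comparison between the explicit value of $f_{k,2}$ and the (conjectured, or for small $k$ known) value of $m_k$, and to settle that comparison via one monotonicity observation together with two tight numerical checks. Recall first that $b_{k,2}=\max\{f_{k,2},m_k\}$, so $b_{k,2}>m_k$ is equivalent to $f_{k,2}>m_k$. For $k=2$ and every $k\ge5$, \Cref{lemma:maximiser} applies (as then $(k,2)\notin\{(3,2),(4,2)\}$) and gives $f_{k,2}=\bigl(1-\tfrac{1}{2k}\bigr)^{k-1}$, while \Cref{lemma:maximiser-remaining} gives $f_{3,2}=3/4$ and $f_{4,2}=175/256$. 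On the other hand, \Cref{conjecture:matchingthresholds} asserts $m_k=\max\bigl\{\tfrac12,\,1-(1-\tfrac1k)^{k-1}\bigr\}$; since $1-(1-\tfrac1k)^{k-1}\ge\tfrac12$ for all $k\ge2$, this simplifies to $m_k=1-(1-\tfrac1k)^{k-1}$. The same identity holds unconditionally for $2\le k\le5$ from the known values of $m_{1,k}$, and the lower bound $m_k\ge 1-(1-\tfrac1k)^{k-1}$ holds unconditionally for all $k$, being the easy ``space barrier'' direction of \Cref{conjecture:matchingthresholds}.

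The analytic core is the following fact, obtained by differentiating a logarithm: for $0<a\le1$, the function $k\mapsto\bigl(1-\tfrac{a}{k}\bigr)^{k-1}$ is strictly decreasing on $(a,\infty)$, since $\frac{d}{dk}\bigl[(k-1)\ln(1-\tfrac{a}{k})\bigr]=\ln\bigl(1-\tfrac{a}{k}\bigr)+\frac{(k-1)a}{k(k-a)}\le\ln(1-x)+x<0$ with $x=a/k\in(0,1)$. Applying this with $a=\tfrac12$ and with $a=1$ shows that $g(k):=\bigl(1-\tfrac{1}{2k}\bigr)^{k-1}+\bigl(1-\tfrac{1}{k}\bigr)^{k-1}$ is strictly decreasing. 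An exact rational computation then certifies $g(16)>1>g(17)$, equivalently $31^{15}+2^{15}\cdot 15^{15}>2^{75}$ and $33^{16}+2^{80}<34^{16}$; hence $g(k)>1$ for all $k\le16$ and $g(k)<1$ for all $k\ge17$.

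Now assemble. For $k\ge5$ we have $f_{k,2}>m_k \iff \bigl(1-\tfrac{1}{2k}\bigr)^{k-1}>1-(1-\tfrac{1}{k})^{k-1} \iff g(k)>1$, so under \Cref{conjecture:matchingthresholds} the inequality $f_{k,2}>m_k$ holds precisely for $5\le k\le16$; for $k\in\{2,3,4\}$ one checks directly $3/4>\tfrac12$, $3/4>5/9$ and $175/256>148/256$, so $b_{k,2}>m_k$ there too, which proves the conditional claim. Unconditionally, for $2\le k\le5$ the same comparisons (adding $6561/10000>5904/10000$ for $k=5$) use the known $m_k$ and give $b_{k,2}>m_k$, whereas for $k\ge17$ we obtain $f_{k,2}=\bigl(1-\tfrac{1}{2k}\bigr)^{k-1}<1-(1-\tfrac{1}{k})^{k-1}\le m_k$ from $g(k)<1$, so $b_{k,2}=\max\{f_{k,2},m_k\}=m_k$. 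The one genuinely delicate point is certifying $g(16)>1>g(17)$: the margins are of order $10^{-3}$, so these inequalities must be verified by exact integer (or validated interval) arithmetic rather than by floating-point estimates; everything else is bookkeeping.
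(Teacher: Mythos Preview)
Your argument is correct and follows essentially the same route as the paper: reduce to comparing $f_{k,2}=(1-\tfrac{1}{2k})^{k-1}$ with $m'_k=1-(1-\tfrac{1}{k})^{k-1}$, observe the relevant monotonicity (your decreasing $g(k)$ is equivalent to the paper's ``$f_{k,2}$ decreasing, $m'_k$ increasing''), and check the crossover between $k=16$ and $k=17$. You supply more detail than the paper does---the derivative computation for monotonicity, the exact integer reformulations of $g(16)>1>g(17)$, and the explicit small-$k$ checks---but the underlying idea is the same.
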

	
	For $k, r \geq 3$ we  obtain the following complete answer.
	
	\begin{corollary} \label{corollary:fversusm}
		Let $k,r \geq 3$.
		Then $b_{k,r} = m_{k}$ if and only if $(k,r) \notin \{ (3,3), (4,3), (3,4) \}$.
	\end{corollary}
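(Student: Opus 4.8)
The plan is to reduce \cref{corollary:fversusm} to a single inequality between $f_{k,r}$ and $m_k$ and then settle it by a short finite case analysis. Since $b_{k,r}=\max\{f_{k,r},m_k\}$ by definition, we have $b_{k,r}=m_k$ precisely when $f_{k,r}\le m_k$, and $b_{k,r}=f_{k,r}>m_k$ otherwise. As we assume $k,r\ge3$, the pair $(k,r)$ avoids the exceptions $(3,2),(4,2)$ of \cref{lemma:maximiser}, so
\[
f_{k,r}=\left(1-\frac{r-1}{kr}\right)^{k-1}=\left(\frac{r(k-1)+1}{kr}\right)^{k-1}.
\]
Since $\frac{r-1}{kr}=\frac1k-\frac1{kr}$ increases in $r$, the value $f_{k,r}$ is strictly decreasing in $r$ for each fixed $k$; hence for fixed $k$ it suffices to locate the least $r\ge3$ with $f_{k,r}\le m_k$.

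\textbf{The three exceptional pairs.} Here I would use the known values $m_3=5/9$ and $m_4=37/64$. A direct computation gives $f_{3,3}=(7/9)^2=49/81$, $f_{3,4}=(3/4)^2=9/16$ and $f_{4,3}=(5/6)^3=125/216$, each of which strictly exceeds $m_k$ (indeed $49/81-5/9=4/81$, $9/16-5/9=1/144$, and $125/216-37/64=1/1728$). Thus $b_{k,r}=f_{k,r}>m_k$ for $(k,r)\in\{(3,3),(3,4),(4,3)\}$, which is the ``only if'' direction.

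\textbf{The remaining pairs.} For the converse I would invoke the unconditional lower bound $m_k\ge 1-\big(\tfrac{k-1}{k}\big)^{k-1}$ coming from the standard space-barrier construction (tight for $k\le5$, which yields the values used above), so it is enough to check $f_{k,r}\le 1-\big(\tfrac{k-1}{k}\big)^{k-1}$ for all $k,r\ge3$ outside the three exceptional pairs. By the monotonicity in $r$ this reduces, for $k\in\{3,4,5\}$, to the three explicit rational inequalities $f_{3,5}=(11/15)^2=121/225<125/225=5/9$, $f_{4,4}=(13/16)^3=2197/4096<2368/4096=37/64$, and $f_{5,3}=(13/15)^4=28561/50625<29889/50625=369/625$. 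For the tail $k\ge6$ (where $m_k$ is not known) I would bound, using $1+x\le e^x$ and $\tfrac{2}{3k}\le\tfrac19$,
\[
f_{k,r}\le f_{k,3}=\left(1-\frac{2}{3k}\right)^{k-1}=\frac{(1-\tfrac{2}{3k})^{k}}{1-\tfrac{2}{3k}}\le\frac{e^{-2/3}}{1-\tfrac19}=\tfrac98\,e^{-2/3}<0.58,
\]
while on the other side $\big(1-\tfrac1k\big)^{k-1}$ is decreasing in $k$, so $m_k\ge 1-\big(\tfrac{k-1}{k}\big)^{k-1}\ge 1-(5/6)^5>0.59>f_{k,r}$. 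In all these cases $f_{k,r}\le m_k$, hence $b_{k,r}=m_k$, completing the proof.

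\textbf{Main obstacle.} There is no deep obstacle once \cref{lemma:maximiser} is available: the argument is elementary bookkeeping. The only points requiring care are that for $(4,3)$ and $(3,4)$ the value $f_{k,r}$ lies only marginally above $m_k$, so the comparisons must be carried out with exact rationals rather than decimals, and that for $k\ge6$ — where $m_k$ is unknown — one must be content with the space-barrier lower bound $1-\big(\tfrac{k-1}{k}\big)^{k-1}$ and verify that $f_{k,r}$ drops safely below it.
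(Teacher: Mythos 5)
Your proposal is correct, and it is essentially the argument the paper has in mind when it says "similar monotonicity arguments yield Corollary~1.8": compare $f_{k,r}=(1-\tfrac{r-1}{kr})^{k-1}$ from Lemma~\ref{lemma:maximiser} (valid since $k,r\ge3$ avoids the exceptional pairs) against $m_k$, exploit monotonicity in $r$ and the unconditional space-barrier lower bound $m_k\ge 1-((k-1)/k)^{k-1}$, and close with a finite check. Your explicit rational comparisons for the borderline pairs (especially $f_{4,3}-m_4=1/1728$) and the $e^{-2/3}$ tail bound for $k\ge6$ are exactly the kind of bookkeeping the paper leaves to the reader, so this fills in a genuinely terse step without changing the route.
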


 \smallskip

 The paper is organised as follows. In the next section we prove a key lemma used in the proof of Theorem~\ref{thm:main}. In Section~\ref{sec:main-thm} we prove 
	Theorem~\ref{thm:main}. The calculations needed to deduce Lemma~\ref{lemma:maximiser} are provided in Section~\ref{sec:numer}. We finish the paper with some concluding remarks in Section~\ref{sec:conc}.
	
	\smallskip
 
{\bf Remark:} Just before submitting this paper,  we learnt of simultaneous and independent work of Lu, Ma and Xie~\cite[Theorem 4]{luma}
who have also proved a version of the $\ell=1$ case of Theorem~\ref{thm:main}.
	
	\section{Switchers from large common neighbourhoods}
	
	In this section, we introduce the notion of a switcher, which is a gadget that allows us to manipulate the colour profile of a matching.
	Roughly speaking, our key contribution, \cref{lem:main}, states that $k$-graphs with sufficiently large common neighbourhoods and no small switchers must be members of the extremal family $\F^\ast_{k,r}$ from \cref{def:extremal-graphs}.
	From this, \cref{thm:main} can be easily derived (see \Cref{sec:main-thm}).
	
	Formally, an \emph{$i$-switcher} is the union of two $r$-edge-coloured matchings $M_1$ and $M_2$ with $V(M_1) = V(M_2)$ such that $M_1$ has more edges of colour $i$ than $M_2$.
	Its \emph{order}  is $|V(M_1)|$. A \emph{switcher} is an $i$-switcher for some $i\in[r]$.
	For a vertex $x$ in a $k$-graph $H$, we write $N(x)$ (or sometimes $N_H(x)$ for clarity) for its \emph{neighbourhood}, which is the set of $(k-1)$-sets $Y$ such that $\{x\} \cup Y$ is an edge in $H$.
	
	\begin{lemma}[Key Lemma]\label{lem:main}
		Let $H$ be a $k$-graph on $n$ vertices such that for every $x,y \in V(H)$, there are at least $k^2$  vertex-disjoint $(k-1)$-sets in $N(x) \cap N(y)$.
		Suppose $H$ has an $r$-edge-colouring with at least one edge of each colour and no switcher of order at most $k^2+k$. Then $H \in \F^\ast_{k,r}$.
	\end{lemma}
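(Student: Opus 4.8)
The plan is to show that the colouring forces a rigid partite structure on $V(H)$. I would proceed in two stages: first, extract a partition from the colouring, and second, verify that every edge has exactly the required type. For the first stage, pick an arbitrary edge $e^\ast$ of $H$ and an arbitrary edge $f_i$ of each colour $i\in[r]$ (which exist by hypothesis). The key observation is that if there were an edge $e$ of colour $i$ and an edge $e'$ of colour $i'\neq i$ with $|e\cap e'|=k-1$ and both meeting many common neighbours appropriately, one could build a small switcher: take the single-edge matchings $\{e\}$ and $\{e'\}$, and use the large common neighbourhood hypothesis to extend the symmetric difference $e\triangle e'$ (two vertices, say $x$ and $y$) into $(k-1)$-sets disjoint from everything, turning $\{e\}\cup(\text{padding})$ and $\{e'\}\cup(\text{padding})$ into two matchings on the same vertex set with different numbers of colour-$i$ edges. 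The hypothesis that there are at least $k^2$ vertex-disjoint common neighbours of any two vertices is exactly what lets us pad within order $k^2+k$, contradicting the no-small-switcher assumption. So no such pair $(e,e')$ can exist; this will be the workhorse.

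Concretely, I would define, for each pair of vertices $u,v\in V(H)$, an equivalence-type relation: $u\sim v$ if swapping $u$ for $v$ in edges never changes the colour, in a suitable local sense made precise using common neighbourhoods. More carefully: fix a large common-neighbourhood system and use it to define, for each vertex $x$, a "colour signature" recording, for each colour $i$, whether $x$ can be added to some colour-$i$ edge after removing one vertex, i.e. which colour classes $x$ "belongs to." Using the switcher obstruction, show this signature is well-defined (independent of the witnessing edge) and that it partitions $V(H)$ into at most $r$ classes $V_1,\dots,V_r$; the padding argument guarantees that any two edges differing in one vertex within the same class have the same colour, and any colour-$i$ edge, upon a one-vertex swap into class $V_{i'}$, must become colour $i'$.

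Once the partition $\{V_1,\dots,V_r\}$ is in hand, the remaining task is to pin down the common type. Let $e$ be a colour-$i$ edge, with intersection pattern $(a_1,\dots,a_r)$ against the partition. A single-vertex swap replacing a vertex of $V_{i'}$ (with $a_{i'}\ge 1$) by a vertex of $V_{i''}$ forces the colour to change from $i$ to... but it can only be colour $i''$ if the new vertex is in $V_{i''}$; iterating swaps and comparing with the edges $f_1,\dots,f_r$ of each colour, plus the no-switcher condition applied to two-edge matchings built from $e$ and $f_i$, will force all colour-$i$ edges to share a single pattern $\mathbf{j}+\sigma\mathbf{e}_i$ for a fixed $\mathbf{j}\in\mathbb{N}_0^r$ and $\sigma\in\{-1,+1\}$ common to all colours. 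The constraint $\sigma+\sum j_i=k$ is automatic from $|e|=k$, and $j_i+\sigma\ge 0$ follows since the pattern entries are nonnegative; hence $(\mathbf{j},\sigma)$ is $k$-valid and $H\in\F^\ast_{k,r}(\mathbf{j},\sigma)$.

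The main obstacle I anticipate is the bookkeeping in the second stage: showing that $\sigma$ and the bulk vector $\mathbf{j}$ are the \emph{same} across all $r$ colours, rather than each colour having its own $(\mathbf{j}^{(i)},\sigma^{(i)})$. This is where one genuinely needs two-edge matchings (not just single edges) in the switcher argument: given a colour-$i$ edge $e$ and a colour-$i'$ edge $e'$ that are vertex-disjoint, one compares the matching $\{e,e'\}$ with a rearranged matching on $V(e)\cup V(e')$ obtained by moving one vertex from $e$ to $e'$ and padding back, which must preserve total colour-$i$ count; tracking how the types transform under this move forces $\mathbf{j}^{(i)}+\sigma^{(i)}\mathbf{e}_i$ and $\mathbf{j}^{(i')}+\sigma^{(i')}\mathbf{e}_{i'}$ to be compatible, and running over all pairs collapses everything to a single $(\mathbf{j},\sigma)$. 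Handling the two sign cases $\sigma=+1$ and $\sigma=-1$ uniformly, and making sure the padding always stays within order $k^2+k$ (we lose at most $k$ vertices to the swapped pair and need at most $k^2$ disjoint $(k-1)$-sets, which the hypothesis supplies), are the points requiring care but no new ideas.
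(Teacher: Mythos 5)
Your ``workhorse'' observation is incorrectly calibrated, and this undermines the first stage. You claim that two edges $e, e'$ of different colours with $|e\cap e'|=k-1$ ``cannot exist'' because one could build a small switcher. But such pairs are ubiquitous in the extremal family $\F^\ast_{k,r}$ itself: in a member of $\F_{k,r}(\mathbf j,\sigma)$ with $\sigma=+1$, swap one vertex of $V_1$ in a colour-$1$ edge for a vertex of $V_2$ and you get a colour-$2$ edge. Tracing through your own construction, with $e\triangle e'=\{x,y\}$ and padding $S\in N(x)\cap N(y)$, the matchings $\{e,\{y\}\cup S\}$ and $\{e',\{x\}\cup S\}$ fail to be a switcher precisely when $\{x\}\cup S$ has the colour of $e$ and $\{y\}\cup S$ has the colour of $e'$, for \emph{every} admissible $S$. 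So the correct conclusion is not a contradiction but a rigidity statement: the colour pattern around a pair $(x,y)$ is determined independently of the witnessing $(k-1)$-set. This is exactly the paper's Claim 2.2 (``$N(x)\cap N(y)=N_i(x)\cap N_j(y)$''), and without this reformulation your first stage has no content.

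The second, more serious gap is the ``colour signature'' partition. You propose recording, for each vertex $x$ and each colour $i$, whether $x$ can sit inside a colour-$i$ edge, and then partitioning $V(H)$ by these signatures. But in a typical member of $\F_{k,r}$ a vertex of $V_1$ lies in edges of \emph{every} colour (as soon as $j_1\ge 1$), so all vertices have the full signature $[r]$ and no partition emerges. What distinguishes the classes is not which colours touch a vertex but a \emph{directional} asymmetry: when is swapping $x$ for $y$ guaranteed to turn a colour-$i$ edge into a non-$i$ edge? The paper encodes this in auxiliary directed graphs $D_i$ (one per colour), proves each $D_i$ is a complete bipartite orientation $V_i^+\to V_i^-$ via a ``no directed path of length two'' argument, and then shows the $r$ bipartitions are compatible — namely that $\{V_i^+\}_{i\in[r]}$ or $\{V_i^-\}_{i\in[r]}$ forms a partition — before reading off the type vector. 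Your signature, being symmetric in $(x,y)$, cannot see this orientation. Your stage-two concern about whether $(\mathbf j,\sigma)$ is uniform across colours is real, but it is resolved in the paper precisely by this compatibility-of-bipartitions step, which has no counterpart in your plan. In short: the spirit (exploit no-small-switcher plus large common neighbourhoods to force structure via one-vertex swaps) is right, but the specific reduction via signatures does not produce the partition, and the argument that would make it work is missing.
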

	
	For the proof of \cref{lem:main} we require some additional vocabulary.
	Given an $r$-edge-coloured $k$-graph $H$ and $x \in V(H)$, we write $N_i(x)$ for the $(k-1)$-sets  $Y \in N(x)$ such that $\{x\} \cup Y$ forms an edge of colour $i$.
	A directed graph $D$ consist of a set of \emph{vertices} $V(D)$ and a set of \emph{(directed) edges} $E(D)$, where each  edge is a pair $(x,y)$ of vertices.
	For a vertex $x \in V(D)$, we denote by $N^+_D(x)$ its \emph{out-neighbourhood}, which is the set of all vertices $y$ such that $(x,y) \in E(D)$.
	The \emph{in-neighbourhood} $N^-_D{(x)}$ is defined analogously.
	
	\begin{proof}[Proof of \cref{lem:main}]
		Let $c\colon E(H)\to [r]$ be an $r$-colouring such that there is at least one edge of each colour and $H$ is free of switchers of order at most $k^2+k$. 
		
		\begin{claim}\label{claim:switcher:1}
			Let $x,y \in V(H)$ be distinct, and let $i,j\in [r]$ be distinct colours.
			Suppose that there is some $S \in N(x) \cap N(y)$ such that 
			\[c(\{x\} \cup S)=i\quad \text{and}\quad c(\{y\} \cup S)=j.\]
			Then, for every $S' \in N(x) \cap N(y)$, we have that $c(\{x\} \cup S')=i$ and $c(\{y\} \cup S')=j$.
			In other words, $N(x) \cap N(y) = N_i(x) \cap N_j(y)$.
		\end{claim}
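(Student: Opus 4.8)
The plan is to prove Claim~\ref{claim:switcher:1} by contradiction, exhibiting a small switcher if the conclusion fails. Suppose we have $S \in N(x)\cap N(y)$ with $c(\{x\}\cup S)=i$ and $c(\{y\}\cup S)=j$ (with $i\neq j$), but there is some $S' \in N(x)\cap N(y)$ violating the conclusion. First I would set up notation: let $e_1=\{x\}\cup S$, $e_2=\{y\}\cup S$, and analyse the colours $c(\{x\}\cup S')=i'$ and $c(\{y\}\cup S')=j'$. The key observation is that $\{x,y\}\cup S$ and $\{x,y\}\cup S'$ are each spanned by a pair of edges: the matching $M=\{e_1\}$ alone covers $\{x\}\cup S$ and has one edge of colour $i$, while $M'=\{e_2\}$ covers $\{y\}\cup S$ with one edge of colour $j$. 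The trick is to build two matchings on the \emph{same} vertex set $\{x,y\}\cup S\cup S'$ (or a suitable subset of it) whose colour counts differ.

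The cleanest version: consider the vertex set $W=\{x,y\}\cup S$, which has size $k+1$. Since $\{x\}\cup S$ and $\{y\}\cup S$ are both edges, but they overlap in $S$, a single edge does not cover $W$. Instead I would use $S'$ to get vertex-disjointness. Actually, the natural switchers of small order here live on at most $2k$ vertices. Consider $M_1=\{\{x\}\cup S,\ \{y\}\cup S'\}$ — but this is only a matching if $S\cap(\{y\}\cup S')=\es$, which need not hold. So the right move is first to invoke the hypothesis that $N(x)\cap N(y)$ contains at least $k^2$ vertex-disjoint $(k-1)$-sets, and hence in particular we may choose $S'$ to be \emph{disjoint} from $S$ (and from $\{x,y\}$, which is automatic since $S'\in N(x)$ so $x\notin S'$, etc., up to the usual care about whether $x\in S'$). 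Granted $S\cap S'=\es$, the sets $\{x\}\cup S$ and $\{y\}\cup S'$ are disjoint, as are $\{y\}\cup S$ and $\{x\}\cup S'$; so $M_1=\{\{x\}\cup S,\ \{y\}\cup S'\}$ and $M_2=\{\{y\}\cup S,\ \{x\}\cup S'\}$ are both matchings on the same vertex set $\{x,y\}\cup S\cup S'$, of order $2k$.

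Now I would compare colour counts. $M_1$ has one edge of colour $i$ (namely $\{x\}\cup S$) and one of colour $j'$; $M_2$ has one edge of colour $j$ and one of colour $i'$. If the conclusion of the claim fails, then either $i'\neq i$ or $j'\neq j$. One then checks case by case that some colour appears more often in one of $M_1$, $M_2$ than in the other, producing a switcher of order $2k\le k^2+k$ (valid since $k\ge 2$), contradicting our assumption. For instance, if $j'=i$ and $i'=i$ but (say) $j\neq i$: then $M_1$ has two edges of colour $i$ and $M_2$ has at most one, so $M_1\cup M_2$ is an $i$-switcher. If instead $i'=j$ and $j'=j$ with $i\neq j$, symmetrically $M_2$ is the one with two edges of colour $j$. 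The remaining combinations ($i'\ne i,j$, or $j'\ne i,j$, etc.) are handled by noting that whichever colour is "new" either appears once on one side and zero times on the other, giving a switcher, or one reduces to comparing against $i$ and $j$; a short finite check covers all of them. Finally, the statement "$N(x)\cap N(y)=N_i(x)\cap N_j(y)$" is just a reformulation: every $S'\in N(x)\cap N(y)$ gets colour $i$ from $x$ and $j$ from $y$, which is exactly membership in $N_i(x)\cap N_j(y)$; the reverse inclusion is immediate.

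The main obstacle I anticipate is the bookkeeping in the case analysis — making sure that for every pattern of $(i',j')\neq(i,j)$ one genuinely gets a switcher of bounded order, and handling the degenerate possibilities (e.g. $x\in S'$ or $y\in S$, which the disjointness selection must rule out, and the subtlety that the definition of $N(x)$ does not forbid $x$ itself from lying in a member of $N(x)$ a priori). Once disjoint representatives $S,S'$ are secured via the large-common-neighbourhood hypothesis, everything else is a routine two-edge-versus-two-edge comparison on $2k$ vertices.
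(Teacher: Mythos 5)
Your approach is the same as the paper's for the core case, but there is a genuine gap in how you handle an arbitrary $S'$. The claim asserts the conclusion for \emph{every} $S'\in N(x)\cap N(y)$, yet you write ``we may choose $S'$ to be disjoint from $S$.'' You cannot re-choose: $S'$ is the given set alleged to violate the conclusion, and it may well intersect $S$. If $S\cap S'\neq\emptyset$, the pairs $M_1=\{S\cup\{x\},\,S'\cup\{y\}\}$ and $M_2=\{S\cup\{y\},\,S'\cup\{x\}\}$ are not matchings, so the two-edge comparison collapses. The large-common-neighbourhood hypothesis guarantees many \emph{other} disjoint sets exist; it does not make your particular $S'$ disjoint from $S$. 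The paper closes this by introducing an intermediate $S''\in N(x)\cap N(y)$ disjoint from both $S$ and $S'$ (possible since there are $\geq k^2$ pairwise-disjoint members), then running the disjoint-pair argument twice: first $(S,S'')$ to transfer the colours to $S''$, then $(S'',S')$ to transfer them to $S'$. You need to add that two-step relay.

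Two minor remarks. First, the degenerate worries you flag (e.g.\ $x\in S'$) do not actually arise: $S'\in N(x)$ means $\{x\}\cup S'$ is a $k$-set, so $x\notin S'$; similarly $y\notin S$. Second, your case check is correct but could be stated cleanly: with $S\cap S'=\emptyset$, if $(i',j')\neq(i,j)$ then either $i'\neq i$ (so colour $i$ appears in $M_1$ but not in $M_2$, since $j\neq i$ and $i'\neq i$) or $i'=i$ and $j'\neq j$ (so colour $j$ appears in $M_2$ but not in $M_1$); either way $M_1\cup M_2$ is a switcher of order $2k\leq k^2+k$.
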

		
		\begin{proofclaim}
			Let $S' \in N(x) \cap N(y)$, and assume first that $S'$ is disjoint from~$S$.
			If $c(\{x\} \cup S') \neq i$ or $c(\{y\} \cup S') \neq j$, then define $M_1 := \{ S\cup \{x\}, S'\cup\{y\} \}$ and $M_2: = \{ S\cup\{y\}, S'\cup \{x\} \}$.
			So $M_1 \cup M_2$ forms a switcher of order $2k \leq k^2 + k$, a contradiction.
			
			Now assume that $S$ and $S'$ intersect.
			By assumption, there exists $S'' \in N(x) \cap N(y)$ disjoint both from $S$ and $S'$, and we can repeat the argument above, first with $S$ and $S''$ and then with $S''$ and $S'$.
			Thus the claim holds for all $S' \in N(x) \cap N(y)$, as desired.
		\end{proofclaim}

		For each colour $i \in [r]$, let  $D_i$ be the directed graph with vertex set $V(D_i)=V(H)$ such that $(x,y) \in E(D_i)$ if there exists at least one $S\in N(x)\cap N(y)$ such that $c(\{x\} \cup S)=i$ and $c(\{y\}\cup S)\not =i$.
		Recall that by assumption, there are at least $k^2$  vertex-disjoint $S\in N(x)\cap N(y)$.		
		So if $(x,y) \in E(D_i)$, then \cref{claim:switcher:1} implies that there are at least $k^2$  vertex-disjoint $S\in N(x)\cap N(y)$ with $c(\{x\} \cup S)=i$ and $c(\{y\}\cup S)\not =i$.

		%
		
		\begin{claim}\label{claim:nonempty}
			The edge set $E(D_i)$ is non-empty for every colour $i \in [r]$.
		\end{claim}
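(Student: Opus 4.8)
The plan is to argue by contradiction: suppose $E(D_i)=\emptyset$ for some $i\in[r]$, and produce a switcher of order at most $k^2+k$, contradicting the hypothesis.

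First I would locate a vertex lying in both a colour-$i$ edge and an edge of another colour. By hypothesis there is a colour-$i$ edge $e$, and (as $r\geq 2$) an edge $f$ of a colour $j\neq i$. Since $e\neq f$, pick $v\in e\sm f$ and $w\in f\sm e$; by the common-neighbourhood assumption there is a $(k-1)$-set $S\in N(v)\cap N(w)$, and then $\{v\}\cup S$ and $\{w\}\cup S$ are distinct edges whose intersection is exactly the $(k-1)$-set $S$. If exactly one of them has colour $i$, then $(v,w)$ or $(w,v)$ lies in $E(D_i)$, a contradiction. Otherwise, if both have colour $i$ then $w$ lies in the colour-$i$ edge $\{w\}\cup S$ and the colour-$j$ edge $f$; if neither does then $v$ lies in the colour-$i$ edge $e$ and the edge $\{v\}\cup S$ of colour $\neq i$. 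So there is a vertex $u$ lying in a colour-$i$ edge $g=\{u\}\cup A$ and in an edge $h=\{u\}\cup B$ of some colour $j\neq i$, where $A,B$ are $(k-1)$-sets avoiding $u$. Put $C:=A\cap B$ and $m:=|A\sm B|=|B\sm A|\in\{1,\dots,k-1\}$ (here $m\geq 1$ as $g\neq h$), and enumerate $A\sm B=\{a_1,\dots,a_m\}$, $B\sm A=\{b_1,\dots,b_m\}$.

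Next I would build two matchings. Using the common-neighbourhood assumption greedily — this is where the slack in ``$\geq k^2$'' is used, since we only ever avoid a fixed set of fewer than $k^2$ vertices and $m\leq k-1$ — choose pairwise-disjoint $(k-1)$-sets $S_1,\dots,S_m$, each disjoint from $\{u\}\cup A\cup B$, with $S_t\in N(a_t)\cap N(b_t)$ for all $t$. Set
\[
M_1:=\{g\}\cup\{\,S_t\cup\{b_t\}:t\in[m]\,\}\quad\text{and}\quad M_2:=\{h\}\cup\{\,S_t\cup\{a_t\}:t\in[m]\,\}.
\]
A short check shows $M_1$ and $M_2$ are matchings (the $S_t$ are disjoint from one another and from $\{u\}\cup A\cup B$, while $b_t\notin g$ and $a_t\notin h$), that $V(M_1)=V(M_2)=\{u\}\cup A\cup B\cup\bigcup_t S_t$, and that this common vertex set has size $k(m+1)\leq k^2\leq k^2+k$.

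Finally I would compare colours. Fix $t$. If $c(S_t\cup\{a_t\})=i\neq c(S_t\cup\{b_t\})$, or $c(S_t\cup\{b_t\})=i\neq c(S_t\cup\{a_t\})$, then $(a_t,b_t)$ respectively $(b_t,a_t)$ lies in $E(D_i)$, a contradiction; hence whenever one of $S_t\cup\{a_t\}$, $S_t\cup\{b_t\}$ has colour $i$, so does the other. Therefore the number of colour-$i$ edges of $M_1$ equals $1+\bigl|\{t:c(S_t\cup\{b_t\})=i\}\bigr|$, that of $M_2$ equals $\bigl|\{t:c(S_t\cup\{a_t\})=i\}\bigr|$, and the two index sets coincide; so $M_1$ has exactly one more colour-$i$ edge than $M_2$. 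Thus $M_1\cup M_2$ is an $i$-switcher of order at most $k^2+k$, the desired contradiction, and so $E(D_i)\neq\emptyset$. The part I expect to carry the real content is the switcher construction: pitting the single ``surplus'' colour-$i$ edge $g$ against $h$ and padding both matchings with the twin pairs $S_t\cup\{a_t\}$, $S_t\cup\{b_t\}$ — whose colours cannot disagree in an $i$-relevant way, on pain of creating a $D_i$-edge — so that every colour class other than $i$ is balanced while colour $i$ is off by exactly one; extracting the mixed vertex $u$ and the disjoint sets $S_t$ are routine uses of the common-neighbourhood hypothesis.
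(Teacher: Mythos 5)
Your proof is correct and takes essentially the same approach as the paper: pit the colour-$i$ edge against the other-coloured edge and pad with paired edges $S_t\cup\{a_t\}$, $S_t\cup\{b_t\}$ whose colour classes cannot break symmetry with respect to $i$ because $E(D_i)=\emptyset$, yielding an $i$-switcher of order at most $k^2+k$. The only difference is your preliminary step of reducing to two edges $g,h$ sharing a vertex $u$; the paper skips this and works directly with arbitrary $e$ of colour $i$ and $f$ of another colour, pairing up $e\setminus f$ with $f\setminus e$, which gives the bound $k+sk\le k^2+k$ without the extra reduction.
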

		\begin{proofclaim}
			For the sake of a contradiction, suppose that $E(D_i)=\emptyset$ for some colour~$i$.
			By an assumption in the lemma, there is an edge $e$ of colour $i$ and an edge $f$ of another colour.
			Let  $\{x_1, \dotsc, x_s\} := e \setminus f$ and $\{ y_1, \dotsc, y_s\} := f \setminus e$.
			We can greedily select pairwise disjoint $(k-1)$-sets $S_1, \dotsc, S_s$, such that for each $t\in[s]$, $S_t$ is disjoint from $e\cup f$ and $S_t \in N(x_t) \cap N(y_t)$.
			Observe that for $t \in [s]$, it follows that $\{x_t\} \cup S_t$ and $\{y_t\} \cup S_t$ must either be both of colour $i$ or both have colour distinct from $i$, since $D_i$  neither contains $(x_t,y_t)$ nor $(y_t,x_t)$.
			Define
			\begin{align*}
				M_1  : = \{ e \} \cup \{  \{y_t\} \cup S_t \colon t \in [s] \} \quad \text{and}  \quad
				M_2   := \{ f \} \cup \{  \{x_t\} \cup S_t \colon t \in [s] \}.
			\end{align*}
			Then $M_1 \cup M_2$ is an $i$-switcher of order $k-s + s(k+1) \leq k^2 + k$, a contradiction.
		\end{proofclaim}
		
		Note that \cref{claim:switcher:1} easily implies that $D_i$ has no directed cycle of length two.
		We now show that $D_i$ cannot have directed paths of length two.
		
		\begin{claim}\label{claim:noP2}
			For each $i\in [r]$, $D_i$ contains no directed path with two edges.
		\end{claim}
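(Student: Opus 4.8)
The plan is to argue by contradiction. Suppose $D_i$ contains a directed path $x\to y\to z$. Since $E(D_i)$ contains no loops (immediate from the definition) and no directed $2$-cycle (as noted just above), the vertices $x,y,z$ are pairwise distinct. I will produce an $i$-switcher of order $3k\le k^2+k$ (recall $k\ge 2$), contradicting the hypothesis on $H$.

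\emph{Step 1: three disjoint gadget sets.} By hypothesis each of $N(x)\cap N(y)$, $N(y)\cap N(z)$ and $N(x)\cap N(z)$ contains at least $k^2$ pairwise vertex-disjoint $(k-1)$-sets. Choose greedily $A\in N(x)\cap N(y)$, then $B\in N(y)\cap N(z)$, then $C\in N(x)\cap N(z)$, so that $A,B,C$ are pairwise disjoint and disjoint from $\{x,y,z\}$. This is possible because at each of the three steps there are at most $2k-1$ forbidden vertices, each of which lies in at most one of the $\ge k^2$ available disjoint $(k-1)$-sets, and $2k-1<k^2$ for $k\ge 2$. In particular $\{x\}\cup A$, $\{y\}\cup A$, $\{y\}\cup B$, $\{z\}\cup B$, $\{x\}\cup C$ and $\{z\}\cup C$ are all edges of $H$.

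\emph{Step 2: colours and the switcher.} Since $(x,y)\in E(D_i)$, there is $S\in N(x)\cap N(y)$ with $c(\{x\}\cup S)=i$ and $c(\{y\}\cup S)\neq i$; \cref{claim:switcher:1} (applied to this $S$, with $A$ playing the role of $S'$) then gives $c(\{x\}\cup A)=i$ and $c(\{y\}\cup A)\neq i$. Symmetrically, from $(y,z)\in E(D_i)$ we get $c(\{y\}\cup B)=i$ and $c(\{z\}\cup B)\neq i$, while the colours of $\{x\}\cup C$ and $\{z\}\cup C$ may be arbitrary. Set
\[
M_1:=\bigl\{\,\{x\}\cup A,\ \{y\}\cup B,\ \{z\}\cup C\,\bigr\}
\qquad\text{and}\qquad
M_2:=\bigl\{\,\{y\}\cup A,\ \{z\}\cup B,\ \{x\}\cup C\,\bigr\}.
\]
By the disjointness arranged in Step 1, $M_1$ and $M_2$ are matchings with the common vertex set $\{x,y,z\}\cup A\cup B\cup C$ of size exactly $3k$. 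Moreover $M_1$ has at least two edges of colour $i$ (namely $\{x\}\cup A$ and $\{y\}\cup B$), whereas $M_2$ has at most one (only $\{x\}\cup C$ could be). Therefore $M_1\cup M_2$ is an $i$-switcher of order $3k\le k^2+k$, contradicting the assumption that $H$ has no switcher of order at most $k^2+k$.

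The only point requiring care is the greedy selection in Step 1, which is routine thanks to the large slack ($\ge k^2$ disjoint common-neighbourhood sets against $O(k)$ forbidden vertices). It is instructive to see why a two-edge switcher does not suffice: given $S\in N(x)\cap N(y)$ and $T\in N(y)\cap N(z)$ one would like to swap to get $M_2=\{\{y\}\cup S,\{x\}\cup T\}$, but $\{x\}\cup T$ need not be an edge of $H$. The symmetric three-edge construction circumvents this, and the uncontrolled colour of the third pair $(\{x\}\cup C,\{z\}\cup C)$ is harmless because $M_1$ already carries two guaranteed colour-$i$ edges against at most one in $M_2$.
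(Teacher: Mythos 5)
Your proof is correct and follows essentially the same route as the paper: pick three pairwise disjoint common-neighbourhood sets (your $A,B,C$ are the paper's $S,T,U$), form the same two matchings $M_1,M_2$ on $\{x,y,z\}\cup A\cup B\cup C$, and observe that $M_1$ carries at least two $i$-coloured edges while $M_2$ carries at most one, yielding an $i$-switcher of order $3k$. The only difference is that you spell out the greedy disjointness argument and the loop/$2$-cycle exclusion in slightly more detail than the paper does.
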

		
		\begin{proofclaim}
			For the sake of a contradiction, suppose that both $(x,y)$ and $(y,z)$ are edges in $D_i$.
			Thus there are colours $j,\ell$, both distinct from $i$, such that 
			we may pick $S\in N_i(x)\cap N_j(y)$, $T\in N_i(y)\cap N_\ell(z)$ and $U\in N(x)\cap N(z)$, all pairwise disjoint.
			Let
			\begin{align*}
				M_1   := \{ S\cup\{x\}, T\cup \{y\}, U\cup \{z\} \} \quad \text{and}  \quad
				M_2   := \{ S\cup \{y\}, T\cup \{z\}, 
				U\cup \{x\} \}.
			\end{align*}
   Then  $M_1 \cup M_2$
			is an $i$-switcher of order $3k \leq k^2+k$, a contradiction.
		\end{proofclaim}
		
		For each colour $i \in [r]$, define
		\begin{equation*}
			V_i^+:=\big\{x \in V(D_i) \colon |N^+_{D_i}(x) |>0\big\} \text{ and } V_i^-:=\big\{x \in V(D_i) \colon |N^-_{D_i}(x)|>0\big\}.
		\end{equation*}
		Note that \cref{claim:noP2} readily implies that $V_i^+$ and $V_i^-$ are disjoint.
		We now prove that they partition the vertex set $V(D_i)=V(H)$ and that the underlying graph is complete bipartite. 
		
		\begin{claim}\label{claim:bipcomplete}
			For every $i \in [r]$, we have that $V_i^+ \cup V_i^- = V(H)$ and $E(D_i) = V_i^+ \times V_i^-$.
		\end{claim}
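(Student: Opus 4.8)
The easy half of the claim is the inclusion $E(D_i)\subseteq V_i^+\times V_i^-$, which is immediate from the definitions: there are no loops in $D_i$, an edge $(x,y)\in E(D_i)$ forces $x\in V_i^+$ and $y\in V_i^-$, and these sets are disjoint by \cref{claim:noP2}. So the plan is to prove the two remaining assertions: that no vertex of $H$ is isolated in $D_i$ (equivalently $V_i^+\cup V_i^-=V(H)$), and the reverse inclusion $V_i^+\times V_i^-\subseteq E(D_i)$. I would derive both from a single gadget.

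Call an unordered pair $\{u,u'\}$ \emph{$i$-balanced} if every $S\in N(u)\cap N(u')$ has $c(S\cup\{u\})=i$ if and only if $c(S\cup\{u'\})=i$; by \cref{claim:switcher:1} this says precisely that neither $(u,u')$ nor $(u',u)$ is an edge of $D_i$. The gadget I would use takes three distinct vertices $a,b,c$ together with a $(k-1)$-set $R\in N(a)\cap N(b)$ satisfying $c(R\cup\{a\})=i$ and $c(R\cup\{b\})\neq i$, such that both pairs $\{a,c\}$ and $\{b,c\}$ are $i$-balanced. Using the hypothesis that every two vertices have at least $k^2$ vertex-disjoint common neighbours — comfortably more than needed, since only three $(k-1)$-sets avoiding at most three extra vertices are required — one greedily picks disjoint $(k-1)$-sets $S\in N(a)\cap N(c)$ and $T\in N(b)\cap N(c)$, both avoiding $R\cup\{a,b,c\}$, and forms the two matchings
\[
M_1=\{R\cup\{a\},\,S\cup\{c\},\,T\cup\{b\}\}\qquad\text{and}\qquad M_2=\{R\cup\{b\},\,S\cup\{a\},\,T\cup\{c\}\}
\]
on the common vertex set $R\cup S\cup T\cup\{a,b,c\}$. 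The two balancedness hypotheses force $[c(S\cup\{a\})=i]=[c(S\cup\{c\})=i]$ and $[c(T\cup\{b\})=i]=[c(T\cup\{c\})=i]$, so $M_1$ has exactly one more colour-$i$ edge than $M_2$; hence $M_1\cup M_2$ is an $i$-switcher of order $3k\leq k^2+k$, contradicting the hypothesis.

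With this gadget in hand the rest is organisation. For non-isolation: if $v$ were isolated in $D_i$, take any edge $(p,q)\in E(D_i)$ (which exists by \cref{claim:nonempty}) with its witness $R\in N(p)\cap N(q)$; since $v$ has neither in- nor out-edges, $\{v,p\}$ and $\{v,q\}$ are $i$-balanced, and the gadget with $(a,b,c)=(p,q,v)$ gives a contradiction. For the reverse inclusion I would first prove two homogeneity statements: (H$^+$) for $x,z\in V_i^+$ and any $w\notin\{x,z\}$, $(x,w)\in E(D_i)\iff(z,w)\in E(D_i)$; and (H$^-$) for $y,w\in V_i^-$ and any $v\notin\{y,w\}$, $(v,y)\in E(D_i)\iff(v,w)\in E(D_i)$. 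For (H$^+$), assuming $(x,w)\in E(D_i)$ but $(z,w)\notin E(D_i)$, one takes $R\in N(x)\cap N(w)$ witnessing the first edge; the pair $\{x,z\}$ is $i$-balanced because $x,z\in V_i^+$, and the pair $\{w,z\}$ is $i$-balanced because $(z,w)\notin E(D_i)$ by assumption while $(w,z)\notin E(D_i)$ since $z\in V_i^+$ has no in-edge (\cref{claim:noP2}); the gadget with $(a,b,c)=(x,w,z)$ finishes it, and (H$^-$) is symmetric (if $(v,y)\in E(D_i)$ then $v\in V_i^+$, the pair $\{v,w\}$ is $i$-balanced, and $\{y,w\}$ is $i$-balanced because $y,w\in V_i^-$ have no out-edges). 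Finally, picking any $(a_0,b_0)\in E(D_i)$, statement (H$^+$) gives $(x,b_0)\in E(D_i)$ for every $x\in V_i^+$, and then (H$^-$) gives $(x,y)\in E(D_i)$ for every $x\in V_i^+$ and $y\in V_i^-$, as required. The one genuine obstacle is isolating this gadget: the naive idea of witnessing an edge $(x,y)$ with $x\in V_i^+$ and $y\in V_i^-$ by a single switcher seems to require order $4k$, which already exceeds $k^2+k$ when $k=2$, whereas the detour through (H$^+$) and (H$^-$) keeps every switcher at order $3k$ and works uniformly for all $k\ge 2$.
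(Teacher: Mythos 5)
Your proof is correct and follows essentially the same route as the paper: the gadget you describe — a switcher of order $3k$ built from a $D_i$-edge $(a,b)$ and a third vertex $c$ that is $i$-balanced with both $a$ and $b$ — is precisely the paper's opening observation (stated contrapositively), and your homogeneity statements (H$^+$) and (H$^-$) repackage the paper's two subsequent deductions. As a small remark, (H$^+$) alone already yields the full complete-bipartite conclusion once $V_i^+\cup V_i^-=V(H)$ is known (for $x\in V_i^+$ and $y\in V_i^-$, pick any $(x',y)\in E(D_i)$ and apply (H$^+$) to $x,x'$ with $w=y$), so the detour through (H$^-$) is harmless but not strictly necessary.
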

		
		\begin{proofclaim}
			First, we observe that for an arbitrary $(x,y)\in E(D_i)$ and any vertex $z \neq x,y$, one of the four pairs $(x,z), (z,x), (y,z), (z,y)$ must belong to $E(D_i)$.
			Suppose otherwise.
			Since $(x,y) \in E(D_i)$, there is a colour $j\not= i$ such that there are at least~$k^2$  disjoint sets in $N_i(x)\cap N_j(y)$.
			Pick pairwise disjoint sets $S\in N_i(x)\cap N_j(y)$, $T\in N(x)\cap N(z)$ and $U\in N(y)\cap N(z)$.
			The assumption $(x,z), (z,x) \notin E(D_i)$ implies that either both $c(\{x\} \cup T)$ and $c(\{z\} \cup T)$ are equal to $i$, or both are distinct from $i$. 
			Similarly, either both $c(\{y\} \cup U)$ and $c(\{z\} \cup U)$ are equal to $i$ or both are distinct from $i$.
   Set
			\begin{align*}
				M_1   := \{S\cup\{x\}, T\cup \{z\}, U\cup \{y\}\}  \quad \text{and}  \quad
				M_2   := \{ S\cup \{y\}, T\cup \{x\} , U\cup \{z\} \}.
			\end{align*}
			Then  $M_1 \cup M_2$ is an $i$-switcher of order $3k \leq k^2 + k$, a contradiction.
			
			Now suppose that there is a vertex $z\in V(D_i)\setminus (V_i^+\cup V_i^-)$.
			Let $(x,y) \in E(D_i)$, and note that $z \neq x,y$.
			But then one of the four pairs $(x,z), (z,x), (y,z), (z,y)$ must belong to $E(D_i)$ by the above observation, which shows that $z \in V_i^+\cup V_i^-$, a contradiction.
			Since $V(D_i) = V(H)$, this confirms the first part of the claim.
			
			Next, we show that the edges of $D_i$ are precisely the pairs pointing from $V_i^+$ to $V_i^-$.
			Firstly, notice that there are no edges within $V_i^+$, within $V_i^-$, or going from $V^-_i$ to $V^+_i$, as otherwise we have a directed path of length two in $D_i$, thus contradicting \cref{claim:noP2}.
			Secondly, suppose that there are vertices $y\in V_i^+$ and 
			$z\in V_i^-$ such that $(y,z)\not\in E(D_i)$.
			As $z\in V_i^-$, there is an edge $(x,z)$ with $x\in V_i^+$ distinct from $y$.
			Since both $x$ and $y$ are in $V_i^+$, 
   we have $(x, y), (y, x)\not\in E(D_i)$.
			Moreover, $(z,y)$ is not an edge in $D_i$, since there is no path of length two in $D_i$.
			In summary, $(x,z)$ is an edge of $D_i$, while none of $(x,y)$, $(y,x)$, $(y,z)$ and $(z,y)$ are.
			But this contradicts our initial observation.
		\end{proofclaim}

		\begin{claim} \label{claim:partition}
			Either $\{V_i^+\}_{i \in [r]}$ or $\{V_i^-\}_{i \in [r]}$ forms a partition of  $V(H)$.
		\end{claim}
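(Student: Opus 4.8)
The plan is to reformulate the claim in terms of the sets $\Pi(x) := \{i\in[r] : x\in V_i^+\}$. By \cref{claim:bipcomplete} (together with \cref{claim:noP2}, which gives $V_i^+\cap V_i^- = \es$), for every vertex $x$ the sets $\Pi(x)$ and $[r]\setminus\Pi(x)=\{i : x\in V_i^-\}$ partition $[r]$. A short argument using \cref{claim:switcher:1} shows both are non-empty: if, say, $\Pi(x)=\es$, pick a colour $i$ with $x\in V_i^-$, witnessed by an edge $(y,x)\in E(D_i)$; then \cref{claim:switcher:1} forces $c(\{x\}\cup S)$ to equal a fixed colour $m\neq i$ for all $S\in N(x)\cap N(y)$, so $(x,y)\in E(D_m)$ and $x\in V_m^+$, a contradiction. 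Moreover $V_i^+\neq\es$ and $V_i^-\neq\es$ for every colour $i$, since $E(D_i)=V_i^+\times V_i^-$ is non-empty by \cref{claim:nonempty,claim:bipcomplete}. In particular $\bigcup_i V_i^+=\bigcup_i V_i^-=V(H)$, so $\{V_i^+\}_i$ is a partition of $V(H)$ iff $|\Pi(x)|=1$ for all $x$, and $\{V_i^-\}_i$ is a partition iff $|\Pi(x)|=r-1$ for all $x$. Thus it suffices to prove that $|\Pi(x)|$ equals $1$ for all $x$, or equals $r-1$ for all $x$.

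Next I would record two lemmas. \textbf{(Transposition.)} If $(x,y)\in E(D_i)$, write $j\neq i$ for the colour with $c(\{y\}\cup S)=j$ for all $S\in N(x)\cap N(y)$ (provided by \cref{claim:switcher:1}); then $\Pi(y)=(\Pi(x)\setminus\{i\})\cup\{j\}$. Indeed $x\in V_i^+$, $y\in V_i^-$, $y\in V_j^+$, $x\in V_j^-$ follow directly from $(x,y)\in E(D_i)$, $(y,x)\in E(D_j)$ and the equalities $E(D_i)=V_i^+\times V_i^-$, $E(D_j)=V_j^+\times V_j^-$; and for $\ell\notin\{i,j\}$ one has $(x,y)\notin E(D_\ell)$ and $(y,x)\notin E(D_\ell)$ (all common-neighbour sets give $c(\{x\}\cup S)=i\neq\ell$ and $c(\{y\}\cup S)=j\neq\ell$), so by complete bipartiteness of $D_\ell$ the vertices $x,y$ lie on the same side of its bipartition, i.e. $\ell\in\Pi(x)\iff\ell\in\Pi(y)$. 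In particular $|\Pi(y)|=|\Pi(x)|$; since $D_i$ is a non-empty complete bipartite graph on $V_i^+\cup V_i^-=V(H)$, this forces $|\Pi(x)|$ to equal a common value $m$ for all $x$, with $1\le m\le r-1$. \textbf{(Local claim.)} For distinct $i,j$, if $V_i^+\cap V_j^+\neq\es$ then $V_i^-\cap V_j^-=\es$: otherwise vertices $x\in V_i^+\cap V_j^+$ and $z\in V_i^-\cap V_j^-$ would give $(x,z)\in E(D_i)\cap E(D_j)$, but \cref{claim:switcher:1} (via the $D_i$-edge) forces $c(\{x\}\cup S)=i$ for all $S\in N(x)\cap N(z)$, contradicting the existence (via the $D_j$-edge) of such an $S$ with $c(\{x\}\cup S)=j$.

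To finish, I would show $m\in\{1,r-1\}$, which by the reduction completes the proof. Suppose instead $2\le m\le r-2$ (so $r\ge 4$), and fix any $x$; put $O:=\Pi(x)$, so $|O|=m$. Applying the Local claim to every pair $\{i_1,i_2\}\subseteq O$ (using $x\in V_{i_1}^+\cap V_{i_2}^+$) shows that every $v$ has $i_1\in\Pi(v)$ or $i_2\in\Pi(v)$, hence $|\Pi(v)\cap O|\ge m-1$; applying it to every pair $\{j_1,j_2\}\subseteq[r]\setminus O$ (using $x\in V_{j_1}^-\cap V_{j_2}^-$) shows $|\Pi(v)\cap([r]\setminus O)|\le 1$. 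Combined with $|\Pi(v)|=m$, this gives $|\Pi(v)\cap\Pi(x)|\ge m-1$ for all $v$; since $x$ was arbitrary, the family $\mathcal F:=\{\Pi(v):v\in V(H)\}$ of $m$-subsets of $[r]$ is pairwise $(m-1)$-intersecting. A standard fact about such a family (a clique in the Johnson graph $J(r,m)$) is that either (a) all its members contain a common $(m-1)$-set $T$, or (b) all its members are contained in a common $(m+1)$-set $W$; this follows by taking two distinct members $C\cup\{a\}$ and $C\cup\{b\}$ with $|C|=m-1$ and checking that any further member either contains $C$ or lies inside $C\cup\{a,b\}$, and that these two options cannot coexist. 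In case (a), each colour of $T$ lies in $\Pi(v)$ for every $v$, so $V_i^-=\es$ for $i\in T$; as $|T|=m-1\ge 1$ this contradicts \cref{claim:nonempty} (via \cref{claim:bipcomplete}). In case (b), each colour outside $W$ lies in no $\Pi(v)$, so $V_i^+=\es$ for $i\notin W$; as $|[r]\setminus W|=r-m-1\ge 1$ this again contradicts \cref{claim:nonempty}. Either way we reach a contradiction, so $m\in\{1,r-1\}$, as required.

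The main obstacle is this last step: the Transposition and Local claims pin down a very rigid structure, and the delicate part is converting "rigid" into the dichotomy $m\in\{1,r-1\}$. Packaging the rigidity as a pairwise-intersecting set family and invoking the structure of Johnson-graph cliques makes the contradiction drop out directly from the already-established non-emptiness of every $V_i^+$ and $V_i^-$. A minor but essential point used throughout the reformulation is that $\Pi(x)$ is a proper non-empty subset of $[r]$ for every vertex $x$ — this is precisely where \cref{claim:switcher:1} enters.
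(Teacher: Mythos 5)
Your proposal is correct, and it takes a genuinely different route from the paper. Both arguments start from the same foundations: the complete bipartiteness $E(D_i)=V_i^+\times V_i^-$ with $V_i^+\cup V_i^-=V(H)$, the non-emptiness of each $V_i^\pm$, and the fact (your ``Local claim'', the paper's inequality \eqref{useful2}) that for distinct $i,j$ at most one of $V_i^+\cap V_j^+$ and $V_i^-\cap V_j^-$ is non-empty. But the core combinatorial step differs. The paper additionally proves that for distinct $i,j$ at least one of those two intersections is non-empty, encodes the situation as a $2$-colouring of $K_r$, checks that every triangle is monochromatic, concludes the whole $K_r$ is monochromatic, and extracts the pairwise disjointness of $\{V_i^+\}$ or $\{V_i^-\}$ from that. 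You instead reformulate via the fingerprint sets $\Pi(x)=\{i:x\in V_i^+\}$, establish a transposition lemma showing that an edge of $D_i$ replaces $i$ by the complementary colour $j$ in the fingerprint, deduce that $|\Pi(x)|$ is a constant $m$, and then use the structure of maximal cliques in the Johnson graph $J(r,m)$ (pairwise $(m-1)$-intersecting families of $m$-sets are either a ``star'' through a common $(m-1)$-set or a ``sunflower-free'' family inside a common $(m+1)$-set) to force $m\in\{1,r-1\}$ from the non-emptiness of every $V_i^+$ and $V_i^-$. Your route is slightly more economical in that it avoids the paper's second intersection lemma, and it is more explicit about the underlying rigidity (the constant $m$ and the transposition structure), in exchange for invoking a small extremal set theory fact where the paper uses an elementary Ramsey-type case analysis. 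Both proofs are of comparable length once details are spelled out.
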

		
		\begin{proofclaim}
			We prove first that 
   \begin{align}\label{useful2}
       \text{for all distinct $i,j \in [r]$, we have  $V_i^+ \cap V_j^+ = \emptyset$ or $V_i^- \cap V_j^- = \emptyset$.}
   \end{align}
			For a contradiction, suppose that there are distinct $i,j\in [r]$ so that we can pick vertices $x\in V_i^+\cap V_j^+$ and $y\in V_i^-\cap V_j^-$. 
			By \cref{claim:bipcomplete}, we know that $(x,y)$ is an edge in both $D_i$ and $D_j$, and therefore there are colours $\ell,m\in [r]$, with $\ell\not =i$ and $m\not =j$, such that $|N_i(x)\cap N_\ell (y)|, |N_j(x)\cap N_m(y)|\geq 1$.
			This contradicts \cref{claim:switcher:1}; so  (\ref{useful2}) holds.

\smallskip 

   Next we show that $\{ V^-_i\}_{i \in [r]}$ or $\{ V^+_i\}_{i \in [r]}$ forms a collection of pairwise disjoint sets. If $r=2$, this follows from (\ref{useful2}); so suppose that $r \geq 3$.
Note that 
\begin{align}\label{useful}
    \text{for every distinct } i,j \in [r], \text{ we have } V^+_i \cap V^+_j \neq \emptyset  \text { or } V^-_i \cap V^-_j \neq \emptyset.
\end{align}
Indeed, otherwise 
$V^+_i = V^-_j$ and $V^-_i = V^+_j$ for some choice of $i,j$. Consider another colour $\ell \in [r]\setminus \{i,j\}$. By (\ref{useful2}), without loss of generality
we may assume that $V^+_i \cap V^+_\ell =\emptyset$; thus, 
$ V^+_\ell  \subseteq  V^-_i = V^+_j$. In particular, $V^+_\ell \cap V^+_j \neq \emptyset$ and \eqref{useful2} imply that $V^-_\ell \cap  V^-_j=\emptyset$. But then $V^-_\ell \subseteq  V^+_j$. So  $V^+_\ell\cup V^-_\ell=V(H)$ is contained in $V^+_j$, a contradiction to the fact that $V^-_j\subseteq V(H)$ is non-empty.
Hence, (\ref{useful}) holds.

			Pick distinct colours $i,j,\ell\in [r]$ and suppose that $V_i^+\cap V_j^+$, $V_j^+\cap V_\ell^+$
			and $V_\ell^-\cap V_i^-$ are non-empty. We then have $V_i^-\cap V_j^-=V_j^-\cap V_\ell^-=V_\ell^+\cap V_i^+=\emptyset$.
			This, however, implies that
			\[
			V^+_i\cap V_j^-= (V_i^+\cap  V_\ell^-\cap V_j^-) \cup
			(V^+_i\cap V_\ell^+\cap V_j^-)=\emptyset,
			\] 
			which together with $V_i^-\cap V_j^-=\emptyset$ implies that $V_j^-=\emptyset$,  a contradiction. 
			In conclusion, for any three distinct colours $i, j, \ell \in [r]$, one of the sets $V_i^+\cap V_j^+$, $V_j^+\cap V_\ell^+$ and $V_\ell^-\cap V_i^-$ must be empty.		
			An analogous argument shows that one of the sets $V^+_i \cap V^+_j$, $V^-_j \cap V^-_\ell$ and $V^-_\ell \cap V^-_i$ has to be empty also.
			
Consider a complete graph $K$ where $V(K)=[r]$.
Colour an edge $ij\in E(K)$ with $+$ if $V^+_i \cap V^+_j \neq \emptyset$; colour $ij\in E(K)$ with $-$ if $V^-_i \cap V^-_j \neq \emptyset$. By (\ref{useful2}) and (\ref{useful}), every edge in $K$ gets precisely one colour. Moreover, the argument in the previous paragraph shows that every triangle in $K$ is monochromatic (i.e., all $+$ or all $-$). This in turn  implies that $K$ itself is monochromatic.
Without loss of generality suppose that every edge in $K$ is coloured $-$. Then (\ref{useful2}) implies that 
for all distinct $i,j \in [r]$, we have  $V_i^+ \cap V_j^+ = \emptyset$. That is,  $\{ V^+_i\}_{i \in [r]}$ forms a collection of pairwise disjoint sets, as required.
   

   \smallskip 
			Finally, we argue that such a collection must also form a partition of $V(H)$.
			Indeed, suppose that $\{ V^+_i\}_{i \in [r]}$ are pairwise disjoint (the corresponding case with $\{ V^-_i\}_{i \in [r]}$ follows analogously).
			Fix any colour  $i \in [r]$ and recall that $V^+_i \cup  V^-_i = V(H)$ by \cref{claim:bipcomplete}.
			Moreover, $(x,y) \in E(D_i)$ for any $x \in V^+_i$ and $y \in V^-_i$.
			However, this implies that $(y,x) \in E(D_j)$ for some other colour $j \in [r]$ by definition of the directed graphs, and hence $y \in V_j^+$.
		\end{proofclaim}
		
		\begin{claim} \label{claim:typesvscolours}
			For every colour $i \in [r]$, there is some integer $0 \leq j \leq k-1$ such that every edge in $H$ has either  type $(j,k-j)$ or $(j+1,k-j-1)$ with respect to $(V_i^+,V_i^-)$.
			Moreover, $e\in E(H)$ has colour $i$ if and only if $e$ is of type $(j+1,k-j-1)$ with respect to $(V_i^+,V_i^-)$.
		\end{claim}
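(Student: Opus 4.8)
The plan is to fix a colour $i \in [r]$ and work with the bipartition $(V_i^+, V_i^-)$ of $V(H)$ that was established in \cref{claim:bipcomplete}. I want to show: (a) every edge of $H$ has exactly two possible ``sizes'' of intersection with $V_i^+$, namely $j$ and $j+1$ for some fixed $j$, and (b) the colour-$i$ edges are precisely the ones with the larger intersection $j+1$. The key structural input is \cref{claim:switcher:1}: once we know the colour of one edge through a pair $(x,y)$ with $x,y$ in the ``wrong'' relative position, the colours of all edges through that pair are forced. Another recurring tool is that any two vertices have at least $k^2$ vertex-disjoint common neighbours, which gives us room to build small switchers whenever a bad configuration arises.

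\begin{proof}[Proof of \cref{claim:typesvscolours}]
Fix a colour $i \in [r]$ and write $V^+ := V_i^+$, $V^- := V_i^-$; recall from \cref{claim:bipcomplete} that $(V^+, V^-)$ partitions $V(H)$, that $E(D_i) = V^+ \times V^-$, and that whenever $(x,y) \in E(D_i)$ there are at least $k^2$ vertex-disjoint sets $S \in N(x) \cap N(y)$, all of which satisfy $c(\{x\} \cup S) = i$ and $c(\{y\} \cup S) \neq i$ by \cref{claim:switcher:1}.

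\textbf{Step 1: the colour of an edge is determined by its type with respect to $(V^+, V^-)$.}
Suppose $e, f \in E(H)$ have the same type $(a, k-a)$ with respect to $(V^+, V^-)$ but $c(e) = i \neq c(f)$. We derive a contradiction by building a small $i$-switcher. Write $e \cap V^+ = \{x_1, \dots, x_a\}$, $f \cap V^+ = \{x_1', \dots, x_a'\}$, $e \cap V^- = \{y_1, \dots, y_{k-a}\}$, $f \cap V^- = \{y_1', \dots, y_{k-a}'\}$. Greedily pick pairwise disjoint $(k-1)$-sets, each disjoint from $e \cup f$, realising common neighbourhoods of the pairs $(x_t, x_t')$ for $t \in [a]$ and $(y_s, y_s')$ for $s \in [k-a]$; this is possible since each such pair has at least $k^2 \geq 2k$ vertex-disjoint common neighbours and we only need $k$ of them. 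Now ``swap'' along these paths to transform $e$ into $f$ one vertex at a time: each swap replaces a matching on $2(k-1)+1 = 2k-1$ vertices (an edge plus a connector set minus the shared... ) — more precisely, the two matchings $\{e\} \cup \{\text{connectors}\}$ and $\{f\} \cup \{\text{connectors with roles swapped}\}$ cover the same vertex set, of size at most $k + k \cdot k = k^2 + k$, and since $c(e) = i$ but $c(f) \neq i$ and no connector edge can change colour status (the pairs $(x_t,x_t')$ and $(y_s,y_s')$ each lie within $V^+$ or within $V^-$, so neither pair is an edge of $D_i$ in either direction, forcing both edges through a connector to agree on whether their colour equals $i$), $M_1$ has strictly more colour-$i$ edges than $M_2$. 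This is an $i$-switcher of order at most $k^2 + k$, a contradiction. Hence any two edges of the same type with respect to $(V^+, V^-)$ either are both colour $i$ or are both not colour $i$: the type determines whether an edge has colour $i$.

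\textbf{Step 2: at most two types occur, and they differ by one.}
Since $E(D_i) = V^+ \times V^-$ is nonempty, $V^+ \neq \emptyset$ and $V^- \neq \emptyset$. Fix $(x,y) \in E(D_i)$ and a set $S \in N(x) \cap N(y)$ disjoint-ish from nothing; then $e := \{x\} \cup S$ has colour $i$ and $f := \{y\} \cup S$ does not. As $e$ and $f$ differ only by swapping the vertex $x \in V^+$ for $y \in V^-$, their types differ by exactly one: if $e$ has type $(j+1, k-j-1)$ then $f$ has type $(j, k-j)$. So a colour-$i$ type and a non-colour-$i$ type that differ by $1$ both occur. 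It remains to rule out any third type. Suppose some edge $g$ has type $(b, k-b)$ with $b \notin \{j, j+1\}$. Pick vertices realising a common neighbour to interpolate: choose $u \in V^+$, $w \in V^-$; using the $k^2$-common-neighbour hypothesis one can move $g$ by single-vertex swaps (each swapping a $V^+$-vertex for a $V^-$-vertex or vice versa, changing the first coordinate of the type by $\pm 1$) through a chain of edges of consecutive types, all the way to an edge of type $(j+1, k-j-1)$ or $(j, k-j)$; along this chain, consecutive edges differ by one swap and so form (together with disjoint connectors) two matchings on the same $\leq k^2+k$ vertices. By Step 1 and the fact that $(j+1,k-j-1)$ is the colour-$i$ type while the immediately adjacent $(j,k-j)$ is not, some consecutive pair in the chain has one edge of colour $i$ and one not, yet the pair of vertices being swapped lies entirely within $V^+$ or entirely within $V^-$ — contradicting the conclusion of Step 1 applied to that swap (or directly yielding an $i$-switcher as in Step 1). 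Hence no type other than $(j,k-j)$ and $(j+1, k-j-1)$ occurs.

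\textbf{Step 3: identifying which type is colour $i$.}
By Step 1, all edges of type $(j+1, k-j-1)$ share the colour-$i$ status of the edge $e$ from Step 2, namely they have colour $i$; and all edges of type $(j, k-j)$ share the status of $f$, namely they do not have colour $i$. This is exactly the second assertion of the claim, and $0 \leq j \leq k-1$ since both types have nonnegative coordinates summing to $k$ and are distinct.
\end{proof}

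\textbf{Expected main obstacle.} The delicate point is the interpolation argument in Steps 1 and 2: one must verify that when swapping a single vertex of $e$ for a vertex of $f$ in the ``same side'' $V^+$ (or $V^-$), the corresponding connector pair is genuinely not an edge of $D_i$ in either orientation, so that the two connector-edges it produces have the same colour-$i$ status and the swap is ``colour-neutral'' except at the designated edge. This is where \cref{claim:bipcomplete} (that $E(D_i) = V^+ \times V^-$, so no edges of $D_i$ lie inside $V^+$ or inside $V^-$) is essential, and one has to be careful to keep all connector sets pairwise disjoint and disjoint from $e \cup f$, which the $k^2$-vertex-disjoint-common-neighbours hypothesis comfortably allows since the total number of connectors needed is at most $k$ and each has size $k-1$.
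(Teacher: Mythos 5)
Your Step 1 is in good shape: the observation that connecting two edges of the \emph{same} type using connector paths through non-crossing pairs (where both endpoints lie in $V_i^+$ or both in $V_i^-$) forces the connector edges to have agreeing colour-$i$ status is exactly the right idea, and the resulting $i$-switcher is correctly of order at most $k^2+k$. (You should explicitly restrict the connectors to the pairs in $e\triangle f$, but that is a cosmetic fix.) The trouble is Step 2, and it is a genuine gap, not a detail.

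Your interpolation argument from an edge $g$ of type $(b,k-b)$ to one of type $(j+1,k-j-1)$ or $(j,k-j)$ via single-vertex swaps presupposes that each intermediate $k$-set along the chain is itself an edge of $H$. Nothing in the hypotheses gives you that: a swap that replaces one vertex of $g\in V_i^+$ by a vertex of $V_i^-$ need not land on an edge at all. Moreover, the contradiction you claim at the end of Step 2 --- that some consecutive pair in the chain has differing colour-$i$ status ``yet the pair of vertices being swapped lies entirely within $V_i^+$ or entirely within $V_i^-$'' --- cannot occur, because by your own setup every step of the chain changes the first coordinate of the type by $\pm 1$, which means every swapped pair is \emph{crossing}, i.e.\ one endpoint in $V_i^+$ and one in $V_i^-$. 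For a crossing pair, a change of colour-$i$ status is exactly what $E(D_i)=V_i^+\times V_i^-$ predicts, so no contradiction follows. In short, Step 2 both relies on edges that may not exist and invokes a contradiction that does not arise.

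The paper's proof sidesteps both problems by working with a single pair of edges $e,f$ of types $(j,k-j)$ and $(\ell,k-\ell)$ with $\ell>j$, no chain required. It takes the symmetric difference $U=(e\cup f)\setminus(e\cap f)$, pairs the vertices of $U$ so that at least $\ell-j$ of the pairs are crossing, and attaches connectors to form two matchings $M_1=\{e\}\cup\{\{y_t\}\cup S_t\}$ and $M_2=\{f\}\cup\{\{x_t\}\cup S_t\}$ on the same vertex set of order at most $k^2+k$. Each crossing connector contributes a fixed $\pm 1$ bias in colour-$i$ count between $M_1$ and $M_2$, each non-crossing connector contributes zero, and the edges $e,f$ contribute at most $\pm 1$ between them; hence if $\ell\geq j+2$ the bias is at least $1$ and $M_1\cup M_2$ is an $i$-switcher, a contradiction. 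When $\ell=j+1$ the bias is zero only if the colour of $e$ and $f$ is forced, which gives the ``iff'' of the claim directly. This delivers both conclusions --- at most two types, and colour determined by type --- in one stroke, with no interpolation and no existence assumptions on intermediate edges. If you want to repair your write-up, you should replace Step 2 with a direct two-edge comparison of this kind; your Step 1 then becomes a special case ($\ell=j$) of the same computation and can be discarded.
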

		
		\begin{proofclaim} Let $i\in[r]$ be given.
			Suppose that there are indices $0\le j<\ell\le k$ and edges $e,f \in E(H)$ such that $e$ has type $(j,k-j)$ and $f$ has type $(\ell,k-\ell)$, both with respect to $(V_i^+,V_i^-)$.
			We write $U := (e \cup f) \sm ( e \cap f)$, and note that $U \cap V_i^+$ and $U \cap V_i^-$ have each size at least $\ell - j$.
			Let us call a pair of vertices $(x,y) \in U^2$ \emph{crossing} if $x \in V_i^+$ and $y \in V_i^-$.
			By the above observation, we may write $U = \{x_1,y_1,\dots,x_q,y_q\}$ such that the number of crossing pairs $(x_t,y_t)$, denoted by $s$, satisfies $s \geq \ell - j$.
			
			Now, take sets $S_t \in N(x_t) \cap N(y_t)$, one for each $t \in [q]$.
			We can greedily pick $S_1, \dotsc, S_t$ to be pairwise disjoint and also disjoint from $e \cup f$, since by assumption we have at least $k^2$ vertex-disjoint choices in every step.
			Note that, crucially, if $(x_t,y_t)$ is crossing, then $c(\{ x_t\} \cup S_t) = i$ and $c(\{ y_t\} \cup S_t) \neq i$.
			Moreover, if $(x_t,y_t)$ is not crossing then $(x_t, y_t) \notin E(D_i)$ and thus $\{ x_t\} \cup S_t$ and $\{ y_t\} \cup S_t$ are either both of colour $i$ or both of a colour distinct from~$i$.
			
			Next, consider $G := M_1 \cup M_2 \subseteq E(H)$ where
			\begin{align*}
				M_1  := \{ e \} \cup \{ \{ y_t\} \cup S_t \colon t \in [q] \} \quad \text{and}  \quad
				M_2  := \{ f \} \cup \{ \{ x_t\} \cup S_t \colon t \in [q] \}.
			\end{align*}
			We remark that in any case, $G$ has $(k - q) + q(k+1) \leq k^2 +  k$ vertices.
			
			Moreover, note that $M_2$ has $s+1, s$ or $s-1$ more $i$-coloured edges than $M_1$, where the three possibilities occur if $c(f) = i \neq c(e)$; $c(e)$ and $ c(f)$ are both or neither equal to $i$; $c(e) = i \neq c(f)$, respectively.
			Thus, if $\ell \geq j+2$, then $s \geq 2$, and hence $M_2$ has always at least $s-1 \geq 1$ more $i$-coloured edge than $M_1$, and therefore $G$ is an $i$-switcher, which is a contradiction.
			Hence, we can only have $\ell=j+1$ and $s = 1$.
			In this case, the only way for $G$ to not be an $i$-switcher is that $c(e) = i$ and $c(f) \neq i$, which proves the second part of the statement.
		\end{proofclaim} 
		
		We now have all the tools to argue that $H \in \F^\ast_{k,r}$.
		By \cref{claim:typesvscolours}, for each $i \in [r]$ there exists $0 \leq f_i < k$ such that the $i$-coloured edges are precisely those of type $(f_i + 1, k - f_i - 1)$ with respect to $(V^+_i, V^-_i)$.
		By \cref{claim:partition}, one of $\{ V^+_i\}_{i \in [r]}$ or $\{ V^-_i\}_{i \in [r]}$ partitions $V(H)$.
		Assume first that $\{ V^+_i\}_{i \in [r]}$ partitions $V(H)$.
		We need to show that there exists $\mathbf{j} = (j_1, \ldots, j_r) \in \N^r_0$ and $\sigma \in \{-1, +1\}$ such that $\sigma + \sum_{i=1}^r j_i = k$, and all $i$-coloured edges are of type $\mathbf{j} + \sigma \mathbf{e}_i$ with respect to $(V^+_1, \dotsc, V^+_r)$.
		
		Let $\sigma := 1$ and $j_i := f_i$ for all $i \in [r]$.
		We claim that this choice satisfies the required properties.
		Indeed, let $e$ be a $1$-coloured edge.
		By our choice of $f_1$, $e$ must be of type $(f_1+1, k - f_1-1)$ with respect to $(V^+_1, V^-_1)$.
		Now, let $x_2, \dotsc, x_r$ be such that $e$ is of type $(f_1 + 1, x_2, \dotsc, x_r)$ with respect to $(V^+_1,V^+_2, \dotsc, V^+_r)$.
		For any $i \neq 1$, since $e$ is not $i$-coloured it must be of type $(f_i, k - f_i)$ with respect to $(V^+_i, V^-_i)$.
		This implies that $f_i = x_i$.
		Thus, we have that \[ \sigma + \sum_{i=1}^r j_i = 1 + \sum_{i=1}^r f_i = (1 + f_1) + \sum_{i=2}^r x_i = k,\] as required.
		Also, our choice implies that the $i$-coloured edges are precisely those that are of type $(j_i + 1, k - j_i - 1)$ with respect to $(V^+_i, V^-_i)$, which is equivalent to being  of type $\mathbf{j} + \sigma \mathbf{e}_i$ with respect to $(V^+_1, \dotsc, V^+_r)$.
		
		Finally, in the case that $\{ V^-_i\}_{i \in [r]}$ partitions $V(H)$, we instead set $\sigma := -1$, and for each $i \in [r]$ we set $j_i := k - f_i$.
		With these changes, the proof is analogous to the previous case.
	\end{proof}

	\section{Proof of the Main Theorem}\label{sec:main-thm}
	
	This section is dedicated to the proof of \cref{thm:main}.
	We require the following two technical lemmata.
	
	\begin{lemma} \label{lemma:colourproportion}
 Let $k,r \geq 2$ and let $n \in \mathbb N$ be divisible by $k$.
		Let $H \in \mathcal{F}^\ast_{k,r}$ be an $n$-vertex $r$-edge-coloured $k$-graph with corresponding partition $\{V_1, \dotsc, V_r\}$, $r$-tuple $(j_1, \dotsc, j_r)$ and $\sigma \in \{-1, +1\}$.
		Suppose $\alpha_1, \dotsc, \alpha_r \geq 0$ are such that $\sum_{i=1}^r \alpha_i = 1$, and for every $i \in [r]$, we have $|V_i| = (j_i + \sigma \alpha_i )n/k$.
		Then every perfect matching in $H$ contains exactly $\alpha_i n /k$ $i$-coloured edges for every $i \in [r]$.
	\end{lemma}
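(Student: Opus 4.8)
The plan is a direct vertex-counting argument, splitting the vertices of each part $V_t$ according to the colours of the matching edges covering them. Fix a perfect matching $M$ in $H$. Since $|V(H)|=n$ and $k\mid n$, the matching $M$ consists of exactly $n/k$ edges. For each colour $i\in[r]$, let $m_i$ denote the number of $i$-coloured edges of $M$, so that $\sum_{i=1}^r m_i = n/k$. The goal is to show $m_i = \alpha_i n/k$ for every $i\in[r]$.

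The key structural input is read off directly from the assumption $H\in\mathcal F^\ast_{k,r}(\mathbf j,\sigma)$ (see \cref{def:extremal-graphs}): every $i$-coloured edge $e$ has type $\mathbf j+\sigma\mathbf e_i$ with respect to $(V_1,\dots,V_r)$, that is, $|e\cap V_i| = j_i+\sigma$ and $|e\cap V_t| = j_t$ for all $t\neq i$. Now fix a part index $t\in[r]$ and count the vertices of $V_t$ through the edges of $M$. As $M$ is a perfect matching, each vertex of $V_t$ lies in exactly one edge of $M$, so $|V_t| = \sum_{e\in M}|e\cap V_t|$. Of the $n/k$ edges of $M$, exactly $m_t$ are $t$-coloured, each contributing $j_t+\sigma$ vertices to $V_t$, and the remaining $n/k-m_t$ edges are of other colours, each contributing $j_t$ vertices to $V_t$. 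Hence
\[
|V_t| \;=\; m_t(j_t+\sigma) + \Bigl(\tfrac{n}{k}-m_t\Bigr)j_t \;=\; j_t\cdot\tfrac{n}{k} + \sigma m_t.
\]

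Finally I would compare this with the hypothesis $|V_t| = (j_t+\sigma\alpha_t)\tfrac{n}{k} = j_t\tfrac{n}{k} + \sigma\alpha_t\tfrac{n}{k}$. Subtracting $j_t\tfrac{n}{k}$ from both expressions for $|V_t|$ gives $\sigma m_t = \sigma\alpha_t\tfrac{n}{k}$, and since $\sigma\in\{-1,+1\}$ we conclude $m_t = \alpha_t\tfrac{n}{k}$, as required. I do not expect a genuine obstacle here; the only points needing care are that $M$ covers every vertex exactly once (so the identity $|V_t| = \sum_{e\in M}|e\cap V_t|$ holds with no over- or undercounting) and that the type condition of \cref{def:extremal-graphs} pins down $|e\cap V_t|$ for every colour and every part simultaneously, so the colour-by-colour split of the sum is valid.
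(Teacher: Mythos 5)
Your proof is correct and is essentially the same vertex-counting argument as the paper's: fix a part $V_t$, sum $|e\cap V_t|$ over $e\in M$, and solve the resulting linear equation for the number of $t$-coloured edges.
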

	
	\begin{proof}
		Let $M$ be a perfect matching in $H$.
		For each $i \in [r]$, let $x_i$ be the number of $i$-coloured edges in $M$.
		Note that $\sum_{i=1}^r x_i = n/k$.
		Let $i \in [r]$ be arbitrary.
		Note that each non-$i$-coloured edge of $H$ intersects $V_i$ precisely in $j_i$ vertices, and every $i$-coloured edge of $H$ intersects $V_i$ precisely in $j_i + \sigma$ vertices.
		Therefore,
		\[ (j_i + \sigma \alpha_i)\frac{n}{k} = |V_i| = j_i \left(\frac{n}{k} - x_i \right) + (j_i + \sigma)x_i = j_i \frac{n}{k} + x_i \sigma, \]
		which solves to $x_i = \alpha_i n/k$, as desired.
	\end{proof}
	
	\begin{lemma}\label{lem:continuity}
	Let $k,r \geq 2$.	For every $\beta > 0$ and $1\leq \ell < k$, there exist $\gamma > 0$ and $n_0\in \mathbb N$ such that the following holds.
		Let $H \in \mathcal{F}^\ast_{k,r}$ be an $r$-edge-coloured $k$-graph on $n\geq n_0$ vertices with a perfect matching which has at most $n/(rk) + \gamma n$ edges of each colour.
		Then $\delta_\ell(H) \leq (f_{\ell,k,r} + \beta) \binom{n-\ell}{k-\ell}$.
	\end{lemma}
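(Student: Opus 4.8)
The plan is to view the claim as a continuity statement: for an \emph{edge-maximal} member of $\mathcal{F}^\ast_{k,r}(\mathbf{j},\sigma)$, the ratio $\delta_\ell(H)/\binom{n-\ell}{k-\ell}$ depends, up to an error of order $1/n$, only on the relative sizes of the parts $V_1,\dots,V_r$; this dependence is continuous; and the hypothesis on the perfect matching forces those relative sizes to lie within $O(\gamma)$ of the balanced proportions in \cref{def:extremal-graphs}, which are exactly those defining $f_{\ell,k,r}$.

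To begin, fix a partition $\{V_1,\dots,V_r\}$ of $V(H)$ and a $k$-valid pair $(\mathbf{j},\sigma)$ witnessing $H\in\mathcal{F}^\ast_{k,r}$, and set $\alpha_i:=(k|V_i|/n-j_i)/\sigma$, so that $|V_i|=(j_i+\sigma\alpha_i)n/k$. Summing $|e\cap V_i|$ over the edges $e$ of the given perfect matching $M$ (equivalently, invoking \cref{lemma:colourproportion}) shows that $M$ has exactly $\alpha_i n/k$ edges of colour $i$; in particular $\alpha_i\ge 0$ and $\sum_{i}\alpha_i=1$. The hypothesis that $M$ has at most $n/(rk)+\gamma n$ edges of each colour then gives $\alpha_i\le 1/r+k\gamma$, and combined with $\sum_i\alpha_i=1$ this yields $|\alpha_i-1/r|\le (r-1)k\gamma$ for all $i$; in particular, for $\gamma$ small every $|V_i|$ is of order $\Theta(n)$. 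Since passing from $H$ to the edge-maximal member $H'$ of $\mathcal{F}^\ast_{k,r}(\mathbf{j},\sigma)$ on the same partition only adds edges, we have $\delta_\ell(H)\le\delta_\ell(H')$, so it suffices to bound $\delta_\ell(H')$.

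Next I would record the exact degrees in $H'$. An $\ell$-set $L$ with $|L\cap V_m|=\ell_m$ lies in precisely
\[
d(L)=\sum_{i=1}^r \mathbf{1}\!\left[\ell_m\le (\mathbf{j}+\sigma\mathbf{e}_i)_m \text{ for all } m\right]\prod_{m=1}^r \binom{|V_m|-\ell_m}{(\mathbf{j}+\sigma\mathbf{e}_i)_m-\ell_m}
\]
edges of $H'$, a quantity depending only on the profile $(\ell_1,\dots,\ell_r)$. Since each $|V_m|=(j_m+\sigma\alpha_m)n/k=\Theta(n)$ while all remaining parameters are bounded, writing $\binom{N}{b}=N^b/b!+O(N^{b-1})$ and using $\sum_m\big((\mathbf{j}+\sigma\mathbf{e}_i)_m-\ell_m\big)=k-\ell$ gives
\[
\frac{\delta_\ell(H')}{\binom{n-\ell}{k-\ell}}=\Phi_{\mathbf{j},\sigma}(\alpha_1,\dots,\alpha_r)+O(1/n),
\]
where $\Phi_{\mathbf{j},\sigma}$ is the minimum, over the finitely many profiles $(\ell_1,\dots,\ell_r)\in\mathbb{N}_0^r$ with $\sum_m\ell_m=\ell$, of the polynomials $\tfrac{(k-\ell)!}{k^{k-\ell}}\sum_{i}\mathbf{1}[\cdots]\prod_m \tfrac{(j_m+\sigma\alpha_m)^{(\mathbf{j}+\sigma\mathbf{e}_i)_m-\ell_m}}{((\mathbf{j}+\sigma\mathbf{e}_i)_m-\ell_m)!}$; the $O(1/n)$ error is uniform as long as all $\alpha_m$ stay bounded away from $0$. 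Being a minimum of finitely many polynomials, $\Phi_{\mathbf{j},\sigma}$ is continuous on $\mathbb{R}^r$.

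Finally, applying the same asymptotic formula with $\alpha_m=1/r$ to the balanced, edge-maximal members of $\mathcal{F}_{k,r}(\mathbf{j},\sigma)$ identifies $f_{\ell,k,r}=\max_{(\mathbf{j},\sigma)}\Phi_{\mathbf{j},\sigma}(1/r,\dots,1/r)$, the limit commuting with the finite maximum over $k$-valid pairs. Given $\beta>0$, continuity of each $\Phi_{\mathbf{j},\sigma}$ at the balanced point supplies some $\delta_{\mathbf{j},\sigma}>0$ such that $\max_i|\alpha_i-1/r|<\delta_{\mathbf{j},\sigma}$ implies $\Phi_{\mathbf{j},\sigma}(\alpha_1,\dots,\alpha_r)\le\Phi_{\mathbf{j},\sigma}(1/r,\dots,1/r)+\beta/2\le f_{\ell,k,r}+\beta/2$. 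Taking $\delta:=\min_{(\mathbf{j},\sigma)}\delta_{\mathbf{j},\sigma}$ over the finitely many pairs, choosing $\gamma>0$ small enough that $(r-1)k\gamma<\delta$ (and that the parts are of order $\Theta(n)$), and then $n_0$ large enough that the $O(1/n)$ term above is below $\beta/2$ for all $n\ge n_0$, we conclude $\delta_\ell(H)\le\delta_\ell(H')\le(f_{\ell,k,r}+\beta)\binom{n-\ell}{k-\ell}$. I expect the one genuinely fiddly point to be ensuring that the $O(1/n)$ error term and the continuity modulus $\delta$ are uniform over the finitely many $k$-valid pairs $(\mathbf{j},\sigma)$ and over all $(\alpha_1,\dots,\alpha_r)$ near the balanced point; everything else is routine estimation.
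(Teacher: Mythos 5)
Your proposal is correct and follows essentially the same route as the paper: extract the proportions $\alpha_i$ from the part sizes, use \cref{lemma:colourproportion} together with the colour-bound hypothesis on $M$ to conclude $\alpha_i = 1/r \pm O(\gamma)$, pass to the edge-maximal graph, and estimate $\ell$-degrees via binomial asymptotics, invoking the definition of $f_{\ell,k,r}$. The paper compresses the final step (``using~\eqref{equation:Vi_size}, the definition of $f_{\ell,k,r}$ and the choice of $\gamma$ and $n_0$'') into one line, whereas you make the underlying continuity argument explicit via the polynomials $\Phi_{\mathbf{j},\sigma}$ and a uniform modulus over the finitely many $k$-valid pairs; this is the same content, just unpacked more carefully.
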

	
	\begin{proof}
		Given $\beta >0$ and $1\leq \ell < k$, we choose $\gamma>0$ sufficiently small and then $n_0\in \mathbb N$ sufficiently large with respect to $\gamma$.
		Let $H$ be a $k$-graph as in the statement of the lemma.
		By definition of $\mathcal{F}_{k,r}^\ast$, there is a partition $\{V_1,\ldots, V_r\}$ of $V(H)$, a tuple $\mathbf{j}=(j_1,\ldots, j_r)\in\mathbb N_0^r$, and $\sigma\in\{-1,+1\}$ such that, for each $i\in [r]$, the $i$-coloured edges of $H$ are those of type $\mathbf{j}+\sigma\mathbf{e}_i$ with respect to $(V_1,\ldots, V_r)$.
		
		Let $M$ be a perfect matching in $H$ as in the statement of the lemma and, for $i\in [r]$, let $M_i$ be the set of $i$-coloured edges of $M$. 
		By assumption, we have that $|M_i| \leq \left( 1/r + \gamma k\right)n/k$ for each $i \in [r]$.
		This also implies that $|M_i| \geq \left( 1/r - \gamma 
 r k\right)n/k$, and thus $|M_i| = \left(1/r \pm \gamma r k\right)n/k$.
Furthermore, since $H$ contains a perfect matching, if $\sigma =1$ we know that 
$j_i n/k \leq |V_i| \leq (j_i+1)n/k$ for every $i \in [r]$;
if $\sigma =-1$ we know that 
$(j_i-1) n/k \leq |V_i| \leq j_in/k$ for every $i \in [r]$.
In particular, this implies there are 
$\alpha_1, \dotsc, \alpha_r \geq 0$  such that $\sum_{i=1}^r \alpha_i = 1$, and for every $i \in [r]$, we have $|V_i| = (j_i + \sigma \alpha_i )n/k$.
 
		Our observations in the last paragraph allow us to apply \cref{lemma:colourproportion} to conclude that, for every $i \in [r]$, we have
		\begin{align} \label{equation:Vi_size} |V_i| = \left( j_i + \sigma \left(\frac{1}{r} \pm \gamma rk \right) \right)\frac{n}{k}.
		\end{align}

		Let $X \subset V(H)$ be an arbitrary set of $\ell$ vertices.
		For each $i \in [r]$, we write $p_i$ for the number of vertices that $X$ has in $V_i$.
		Let $d_s(X)$ denote the number of $s$-coloured edges in $H$ containing $X$.
		It follows that
		\[d_s(X)\le \binom{|V_s|}{j_s+\sigma - p_s}  \prod_{i\in [r]\setminus \{s\}} \binom{|V_i|}{j_i-p_i}.\]
		Therefore, using~\eqref{equation:Vi_size}, the definition of $f_{\ell,k,r}$ and the choice of $\gamma$ and $n_0 \leq n$, we have
		\[d(X)=\sum_{i\in [r]}d_i(X)\le (f_{\ell,k,r}+\beta)\binom{n-\ell}{k-\ell},\]
		as required.
	\end{proof}

	Now we are ready to derive our main result.
	
	\begin{proof}[Proof of \cref{thm:main}]
Let $\eps >0$ and $\ell,k, r \in \mathbb N$ where	 $1 \leq \ell < k$ and $r \geq 2$. 
Set $\beta := \eps/3$ and let $0<\gamma '\leq \beta /(k^3+k^2)$ be such that the statement of 
\cref{lem:continuity} holds with $\gamma'$ playing the role of $\gamma$. Then choose
		 $0<\gamma \leq \gamma '/(2r^2)$ and let
		 $n_0\in \mathbb N$ be sufficiently large.
  
   Let $n \geq n_0$ be divisible by $k$.
    Let $H$ be an $n$-vertex  $k$-graph with $\delta_\ell(H)\ge (\max\{f_{\ell,k,r}, m_{\ell, k}\}+\varepsilon)\tbinom{n-\ell}{k-\ell}$; by definition of $m_{\ell, k}$, $H$ contains a perfect matching.
		For the sake of a contradiction,  suppose that there is an $r$-edge-colouring of $H$ such that for every perfect matching $M$ in $H$ and every colour $i\in [r]$, the number of $i$-coloured edges in $M$ is at most $n/(kr)+\gamma n$.
		
		Let $\mathcal{S}$ be a maximal collection of vertex-disjoint switchers of order at most $k^2 + k$ in~$H$.
		
		\begin{claim}
			The size of $\mathcal{S}$ is at most $\gamma ' n$.
		\end{claim}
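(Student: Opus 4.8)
The plan is to argue by contradiction: assuming $|\mathcal{S}| > \gamma' n$, I will build a perfect matching of $H$ with strictly more than $n/(kr) + \gamma n$ edges of some colour, contradicting the standing assumption. The mechanism is that each switcher $\mathcal{S}_j \in \mathcal{S}$ decomposes as $M_1^j \cup M_2^j$, where $M_1^j$ and $M_2^j$ are matchings on the \emph{same} vertex set $V(\mathcal{S}_j)$ and $M_1^j$ has at least one more edge of some colour $i_j \in [r]$ than $M_2^j$. Hence, in any perfect matching covering $V(\mathcal{S}_j)$ by $M_2^j$, swapping $M_2^j$ for $M_1^j$ gains at least one $i_j$-coloured edge, and such swaps over vertex-disjoint switchers are independent.

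First I would pass to a sub-collection $\mathcal{S}_0 \subseteq \mathcal{S}$ with $|\mathcal{S}_0| = \lceil \gamma' n \rceil$ (possible since $|\mathcal{S}| > \gamma' n$), and then, by pigeonhole over the colours $i_j$ with $\mathcal{S}_j \in \mathcal{S}_0$, to $\mathcal{S}' \subseteq \mathcal{S}_0$ with $|\mathcal{S}'| \geq \gamma' n / r$ all of whose switchers favour one common colour $i^* \in [r]$. Set $W' := \bigcup_{\mathcal{S}_j \in \mathcal{S}'} V(\mathcal{S}_j)$. Since the switchers are vertex-disjoint and each has order at most $k^2 + k$, we get $|W'| \leq \lceil \gamma' n \rceil (k^2+k) \leq 2\gamma' n (k^2 + k) \leq 2\beta n / k$, using $\gamma' \leq \beta/(k^3+k^2)$ and $n_0$ large; moreover $|W'|$ is divisible by $k$ since each $|V(\mathcal{S}_j)|$ is, so $n - |W'|$ is divisible by $k$.

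Next I would show $H - W'$ still contains a perfect matching. Deleting $W'$ costs at most $|W'| \binom{n-\ell-1}{k-\ell-1} \leq 2\beta \binom{n-\ell}{k-\ell}$ in the minimum $\ell$-degree, so $\delta_\ell(H - W') \geq (m_{\ell,k} + \eps - 2\beta)\binom{n-\ell}{k-\ell} = (m_{\ell,k} + \beta)\binom{n-\ell}{k-\ell} \geq (m_{\ell,k} + \beta)\binom{n - |W'| - \ell}{k - \ell}$, using $\eps = 3\beta$; since $n - |W'|$ is large and divisible by $k$, the definition of $m_{\ell,k}$ yields a perfect matching $M_0$ of $H - W'$. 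Then $\widehat{M} := M_0 \cup \bigcup_{\mathcal{S}_j \in \mathcal{S}'} M_1^j$ and $\check{M} := M_0 \cup \bigcup_{\mathcal{S}_j \in \mathcal{S}'} M_2^j$ are perfect matchings of $H$, and because the switchers are vertex-disjoint and share the common part $M_0$, the number of $i^*$-coloured edges of $\widehat{M}$ exceeds that of $\check{M}$ by at least $|\mathcal{S}'| \geq \gamma' n / r$.

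Finally I would extract the bias. Applying the standing assumption to $\widehat{M}$ bounds its $i^*$-coloured edges by $n/(kr) + \gamma n$, so $\check{M}$ has at most $n/(kr) + \gamma n - \gamma' n / r$ edges of colour $i^*$; the remaining at least $\tfrac{(r-1)n}{kr} + \gamma' n / r - \gamma n$ edges of $\check{M}$ lie in the other $r-1$ colours, whence some colour $c^* \neq i^*$ appears on at least $\tfrac{n}{kr} + \tfrac{\gamma' n / r - \gamma n}{r-1}$ edges of $\check{M}$. Since $\gamma \leq \gamma'/(2r^2)$, a direct computation gives $\tfrac{\gamma' n/r - \gamma n}{r-1} > \gamma n$, so $\check{M}$ has more than $n/(kr) + \gamma n$ edges of colour $c^*$ — the desired contradiction. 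I expect the only delicate point to be calibrating the constants so that $|W'|$ is small enough for $H - W'$ to keep a perfect matching while $|\mathcal{S}'|$ remains large enough to force the bias; this is exactly what the choices $\gamma' \leq \beta/(k^3+k^2)$ and $\gamma \leq \gamma'/(2r^2)$ are tuned for, and there is no genuine combinatorial obstacle beyond this bookkeeping.
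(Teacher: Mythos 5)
Your proof is correct and follows essentially the same strategy as the paper: pass by pigeonhole to a single-colour sub-collection of switchers, remove their vertices, find a perfect matching of what remains, and form two perfect matchings of $H$ by reinserting the minority versus majority states. The only cosmetic difference is in the endgame — the paper lower-bounds the minority-state matching's $i$-coloured edges and shows the majority state overshoots, whereas you upper-bound the majority state's $i^*$-coloured edges and pigeonhole to find a different colour $c^*$ overshooting in the minority state — but these are symmetric manipulations of the same contradiction.
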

		
		\begin{proofclaim}
			Assume otherwise.
			Recall that a switcher is the union of two matchings $M_1$ and $M_2$  with $V(M_1) = V(M_2)$ together with a colour $i$, such that $M_1$ has more edges of colour $i$ than $M_2$.
			We call $M_1$ and $M_2$ its \emph{majority} and \emph{minority state}, respectively.			
			Clearly there exists some colour $i \in [r]$, such that 
   there is a collection $\mathcal{S}_i \subseteq \mathcal{S}$ of  
   $i$-switchers where $|\mathcal{S}_i|= \lceil \gamma ' n /r \rceil$.
   
			Let $H'\subseteq H$ be the $k$-graph obtained from $H$ by removing $V(\mathcal{S}_i)$. The choice of $\gamma '$ ensures that
			 $\delta _{\ell} (H')\geq (\max\{f_{\ell,k,r},\, m_{\ell,k}\} + \eps/2) \tbinom{n-\ell}{k-\ell}$. Thus, $H'$ contains a  perfect matching $M$.
			
			Let $M'$ be the union of $M$ with the minority state matchings of $\mathcal{S}_i$; so $M'$ is a perfect matching in $H$.
			By assumption, $M'$ has at most $n/(kr)+\gamma n$ edges of each colour.
			So in particular, $M'$ has at least $n/(kr) - (r-1)\gamma n$ edges of colour $i$.
			Let $M''$ be the union of $M$ with the majority state matchings of $\mathcal{S}_i$.
			It follows that $M''$ has at least $\gamma ' n/r > r\gamma n$ more edges of colour $i$ than $M'$, and thus more than $n/(kr) + \gamma n$ edges of colour $i$ in total,
			a contradiction.
		\end{proofclaim}
		
		Let $H'\subseteq H$ be the $k$-graph obtained from $H$ by removing $V(\mathcal{S})$.
		Thus,
        \begin{equation}
            \delta_\ell(H') \geq (\max\{f_{\ell,k,r},\, m_{\ell,k}\} + \eps/2) \tbinom{n-\ell}{k-\ell}. \label{equation:minldegreeH'}
        \end{equation} 
		So there is a perfect matching $M$ in $H'$.
		By assumption, $M$ has at most $n/(kr) +  \gamma n\leq |V(H')|/(kr)+\gamma '|V(H')|$ edges of each colour. Indeed otherwise, we could choose one matching from each switcher in $\mathcal{S}$ and add it to $M$, resulting in a perfect matching of $H$ containing at least $n/(kr) +  \gamma n$ edges of one colour, a contradiction.
		
		Next, we show that $H' \in \mathcal{F}^\ast_{k,r}$ by applying \cref{lem:main}.
		To this end, note that $H'$ does not contain any switcher of order at most $k^2 + k$  by the maximality of $\mathcal{S}$.
		Moreover, since $M$ has at most $n/(kr) +  \gamma n$ edges of each colour, it has at least one edge of each colour.
        Therefore, $H'$ still has at least one edge of each colour.
		Finally, we observe that every two vertices $x,y \in V(H')$ have at least $k^2$ vertex-disjoint $(k-1)$-sets in their common neighbourhood in $H'$.
		Indeed, as $m_{\ell,k} \geq 1/2$ (see, e.g.,~\cite{Zha16})
		we infer from \eqref{equation:minldegreeH'}  that $\delta_1 (H')\geq (1/2 + \eps/2) \tbinom{n-1}{k-1}$.
		This shows that $|N_{H'}(x) \cap N_{H'}(y) |\geq (\eps/2) \tbinom{n-1}{k-1}$, and thus we may greedily select $k^2$ vertex-disjoint elements.
		We may therefore apply \cref{lem:main} to deduce that $H' \in \mathcal{F}^\ast_{k,r}$.
  Recall that $M$ is a perfect matching in $H'$  with at most $|V(H')|/(kr)+\gamma '|V(H')|$ edges of each colour. Therefore, we may apply
 \cref{lem:continuity} to deduce that $\delta_\ell(H') \leq (f_{\ell,k,r} + \eps/3) \binom{n-\ell}{k-\ell}$, which contradicts \eqref{equation:minldegreeH'}.
	\end{proof}
	
	\section{Numerology}\label{sec:numer}
	
	In this section we are interested in quantitative aspects of $f_{ k, r}$.
    Our main goal is to prove \cref{lemma:maximiser}, which determines $f_{k,r}$ in all but two cases.

	Our first goal is to give a formula for $f_{k,r}$ that is simpler to calculate.
	Given a $k$-valid pair $(\mathbf{j}, \sigma)$ with $\mathbf{j} \in \NATS_0^r$ and $\sigma \in \{-1,1\}$, we define
	\[ f_{(\mathbf{j}, \sigma)}: = 
	\lim_{n \rightarrow \infty}\max_{\substack{H \in \F_{k,r}(\mathbf{j}, \sigma) \\ |V(H)| = krn}} \frac{\delta_1(H)}{\binom{|V(H)| - 1}{k - 1}}, \]
	and thus we have
	\[ f_{k,r} = \max_{\substack{ \mathbf{j} \in \mathbb{N}^r_0, \, \sigma \in \{-1, 1\} \\ ( \mathbf{j}, \sigma) \text{ is $k$-valid}}} f_{(\mathbf{j}, \sigma)}.\]
	Therefore, our first task will be to understand $f_{(\mathbf{j}, \sigma)}$ for a given $k$-valid pair $(\mathbf{j}, \sigma)$. The next lemma gives a formula for $f_{(\mathbf{j}, \sigma)}$ which removes the limit and the dependency on $n$.
    We denote the \emph{multinomial coefficient} of non-negative integers $n,k_1,\dots,k_m$ by $\binom{n}{k_1,\dots,k_m} := \frac{n!}{k_1! \dots k_m!}$.
 
	\begin{lemma} \label{lemma:fkr-reduction1}
		For each $k, r \geq 2$ and each $k$-valid $(\mathbf{j}, \sigma)$, we have
		\[ f_{(\mathbf{j}, \sigma)} =  \min_{1 \leq i \leq r} \left\lbrace \frac{1}{k^k r^{k-1}(r j_i + \sigma)} \sum_{\ell = 1}^r j_{\ell, i} \binom{k}{j_{\ell, 1}, \dotsc, j_{\ell, r}} \prod_{s=1}^r (r j_s + \sigma )^{j_{\ell, s}} \right\rbrace ,\]
		where $j_{\ell, s}$ is the $s$th coordinate of the vector $\mathbf{j}+\sigma \mathbf{e}_\ell$; that is, $j_{\ell,s} = j_s$ if $\ell \neq s$, and $j_s + \sigma$ if $\ell = s$.
	\end{lemma}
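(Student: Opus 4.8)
The plan is to compute the minimum vertex degree of a member $H\in\F_{k,r}(\mathbf j,\sigma)$ directly, take the limit as $n\to\infty$, and then optimise over which part a degree-$1$ vertex lies in. Fix $H\in\F_{k,r}(\mathbf j,\sigma)$ on $N:=krn$ vertices with parts $V_1,\dots,V_r$ of sizes $|V_s|=(rj_s+\sigma)n$ (so $|V_s|=(rj_s+\sigma)N/(kr)$, matching \cref{def:extremal-graphs}). For the edge-maximal such $H$, every edge of type $\mathbf j+\sigma\mathbf e_\ell$ is present and receives colour $\ell$. So for a vertex $x\in V_i$, the degree $d_H(x)$ counts all $(k-1)$-sets $Y$ with $\{x\}\cup Y$ of type $\mathbf j+\sigma\mathbf e_\ell$ for some $\ell\in[r]$. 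Summing over the colour class $\ell$: if $\ell\ne i$ then $x$ contributes one vertex to the $V_i$-coordinate, so we must choose $j_i-1=j_{\ell,i}-1$ further vertices from $V_i$ and $j_{\ell,s}$ from each $V_s$, $s\ne i$; if $\ell=i$ we choose $j_i+\sigma-1=j_{i,i}-1$ further from $V_i$ and $j_{i,s}=j_s$ from each other part. In all cases the count is $\binom{|V_i|-1}{j_{\ell,i}-1}\prod_{s\ne i}\binom{|V_s|}{j_{\ell,s}}$. First I would observe that this is a decreasing function of $i$'s "own" coordinate in the sense that $\binom{|V_i|-1}{j_{\ell,i}-1}=\frac{j_{\ell,i}}{|V_i|}\binom{|V_i|}{j_{\ell,i}}(1+o(1))$, which is exactly where the factor $j_{\ell,i}/(rj_i+\sigma)$ in the statement will come from after normalisation.

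Next I would pass to the asymptotics. Using $\binom{|V_s|}{j_{\ell,s}}=\frac{|V_s|^{j_{\ell,s}}}{j_{\ell,s}!}(1+o(1))$ and $|V_s|=(rj_s+\sigma)N/(kr)$, together with $\binom{N-1}{k-1}=\frac{N^{k-1}}{(k-1)!}(1+o(1))$ and $\sum_s j_{\ell,s}=k$ for each $\ell$ (so the powers of $N$ all collapse to $N^{k-1}$), we get
\[
\frac{d_H(x)}{\binom{N-1}{k-1}}=(1+o(1))\sum_{\ell=1}^r \frac{(k-1)!\, j_{\ell,i}}{|V_i|}\cdot\frac{1}{\prod_{s=1}^r j_{\ell,s}!}\prod_{s=1}^r\left(\frac{(rj_s+\sigma)N}{kr}\right)^{j_{\ell,s}}\cdot\frac{1}{N^{k-1}}.
\]
Substituting $|V_i|=(rj_i+\sigma)N/(kr)$, the leading $N^{k-1}$ cancels, one factor of $N$ cancels against $|V_i|$, and collecting the $kr$-powers gives $(kr)^{-(k-1)}\cdot(kr)/(rj_i+\sigma)=k^{-(k-1)}r^{-(k-2)}/(rj_i+\sigma)$; one more factor of $k$ appears from rewriting $(k-1)!/j_{\ell,i}!\cdots=\frac1k\binom{k}{j_{\ell,1},\dots,j_{\ell,r}}$ (since $\binom{k}{j_{\ell,1},\dots}=k\binom{k-1}{j_{\ell,i}-1,\,(j_{\ell,s})_{s\ne i}}$ when $j_{\ell,i}\ge1$, and the $j_{\ell,i}$ numerator is precisely what multiplies that). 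After this bookkeeping the limit of $d_H(x)/\binom{N-1}{k-1}$ for $x\in V_i$ equals the $i$-th term in the braces in the statement. Here I should be careful that the formula is stated with a leading $j_{\ell,i}$ (not $j_{\ell,i}-1$), which is correct because $j_{\ell,i}\binom{k-1}{j_{\ell,i}-1,\dots}=\frac{j_{\ell,i}}{k}\binom{k}{j_{\ell,i},\dots}$ — I would verify this identity explicitly. Edge cases where $j_{\ell,i}=0$ contribute $0$ to the sum, consistent with $\binom{|V_i|-1}{-1}=0$, so no special handling is needed.

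Finally, $\delta_1(H)$ is the minimum over vertices, i.e.\ the minimum over $i\in[r]$ of the per-part degree; since the edge-maximal $H$ simultaneously maximises every $d_H(x)$ among members of $\F_{k,r}(\mathbf j,\sigma)$, it is the maximiser in the definition of $f_{(\mathbf j,\sigma)}$, and taking the limit gives exactly the claimed $\min_{1\le i\le r}$ formula. I expect the main obstacle to be purely the bookkeeping: tracking the various factorials, the conversion between $\binom{k-1}{\cdot}$ and $\binom{k}{\cdot}$, and making sure the $j_{\ell,i}$ versus $j_{\ell,i}-1$ discrepancy is reconciled correctly, while also justifying uniformly (independent of the fixed tuple $(\mathbf j,\sigma)$, which has bounded entries) that the $o(1)$ error terms from the Stirling-type estimates vanish in the limit. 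A minor additional point is to justify that among all $H\in\F_{k,r}(\mathbf j,\sigma)$ — not just the edge-maximal one — the supremum of $\delta_1$ is attained in the limit by the edge-maximal member; this is immediate since adding edges of any permitted type only increases degrees and keeps $H$ in the family.
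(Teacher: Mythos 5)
Your approach matches the paper's proof exactly: compute the degree of a vertex in $V_i$ in the edge-maximal member of $\F_{k,r}(\mathbf{j},\sigma)$ by summing the coloured degrees $d_\ell(v_i) = \binom{|V_i|-1}{j_{\ell,i}-1}\prod_{s\neq i}\binom{|V_s|}{j_{\ell,s}}$, approximate via $\binom{\alpha n}{m}=(1+o(1))\alpha^m n^m/m!$, and take the minimum over $i\in[r]$. One slip to fix when you carry out the verification you flag: the identity as you wrote it, $j_{\ell,i}\binom{k-1}{j_{\ell,i}-1,(j_{\ell,s})_{s\neq i}}=\frac{j_{\ell,i}}{k}\binom{k}{j_{\ell,1},\dots,j_{\ell,r}}$, has a spurious leading $j_{\ell,i}$; the correct and needed identity is $\binom{k-1}{j_{\ell,i}-1,(j_{\ell,s})_{s\neq i}}=\frac{j_{\ell,i}}{k}\binom{k}{j_{\ell,1},\dots,j_{\ell,r}}$, which is exactly where the factor $j_{\ell,i}$ in the lemma's statement originates.
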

	
	\begin{proof}
		Recall that, given $n$ divisible by $kr$, the
        size of the classes $V_1, \dotsc, V_r$ in any $n$-vertex $k$-graph in the family $\F_{k, r}(\mathbf{j}, \sigma)$ are fixed and equal to $|V_i| = (r j_i + \sigma)n/rk$ for each $1 \leq i \leq k$. Amongst such 
        $k$-graphs $H$, the maximum $\delta_1(H)$ is attained when $H$ is a $k$-graph in $\F_{k, r}(\mathbf{j}, \sigma)$ where all the possible edges of a given type are present; from now on let $H$ be such a $k$-graph on $n$ vertices.
		
		To understand $\delta_1(H)$, first we will calculate the number of $\ell$-coloured edges incident to a vertex $v_i \in V_i$, for each $i, \ell \in [r]$.
		To do this, note that the $\ell$-coloured edges of $H$ are of type $\mathbf{j}+\sigma \mathbf{e}_\ell$.
		Suppose first that $j_{\ell, i} \neq 0$.
		Then the $\ell$-coloured degree $d_\ell(v_i)$ of $v_i$ is equal to
		\[ d_\ell(v_i) = \binom{|V_i| - 1}{j_{\ell, i} - 1} \left[ \prod_{s \neq i} \binom{|V_s|}{j_{\ell, s}} \right]. \]
		Since we are interested in the limiting value as $n \rightarrow \infty$, we can use the approximation $\binom{\alpha n}{k} = (1+o(1))\alpha^k n^k / k!$ to write the formula above as
		\begin{align*}
			d_\ell(v_i)
			& = (1+o(1)) \left( \frac{r j_i + \sigma}{rk}\right)^{j_{\ell, i} - 1} \frac{n^{j_{\ell, i} - 1}}{(j_{\ell, i} - 1)!} \left[ \prod_{s \neq i} \left( \frac{r j_s + \sigma}{rk} \right)^{j_{\ell, s}} \frac{n^{j_{\ell, s}}}{j_{\ell, s}!} \right] \\
			& = (1+o(1)) \left[ \prod_{s=1}^r \left( \frac{r j_s + \sigma}{rk} \right)^{j_{\ell, s}} \frac{n^{j_{\ell, s}}}{j_{\ell, s}!} \right] \frac{rk}{r j_i + \sigma} \frac{j_{\ell, i}}{n} \\
			& = (1+o(1)) \left[ \prod_{s=1}^r \frac{(r j_s + \sigma )^{j_{\ell, s}}}{j_{\ell, s}!} \right] \frac{j_{\ell, i}}{r j_i + \sigma} \left( \frac{n}{rk} \right)^{-1 + \sum_{s=1}^r j_{\ell, s}} \\
			& = (1+o(1)) \left[ \prod_{s=1}^r \frac{(r j_s + \sigma )^{j_{\ell, s}}}{j_{\ell, s}!} \right] \frac{j_{\ell, i}}{r j_i + \sigma} \left( \frac{n}{rk} \right)^{k-1} \\
			& = (1+o(1)) \left[ \prod_{s=1}^r \frac{(r j_s + \sigma )^{j_{\ell, s}}}{j_{\ell, s}!} \right] \frac{j_{\ell, i} (k-1)!}{r^{k-1}k^{k-1}(r j_i + \sigma)}  \frac{n^{k-1}}{(k-1)!} \\
			& = (1+o(1)) \left[ \prod_{s=1}^r (r j_s + \sigma )^{j_{\ell, s}} \right] \binom{k}{j_{\ell, 1}, \dotsc, j_{\ell, r}}\frac{j_{\ell, i} }{k^k r^{k-1}(r j_i + \sigma)} \binom{n-1}{k-1}.
		\end{align*}
		Now, note that if $j_{\ell, i} = 0$, then the $i$th coordinate of $\mathbf{j}+\sigma \mathbf{e}_\ell$ is zero, which implies that $v_i$ does not belong to any $\ell$-coloured edge and thus $d_\ell(v_i) = 0$.
		But the formula above coincides also in this case (because of the presence of the $j_{\ell, i}$), so it is valid in all cases.
		
		The degree $d_H(v_i)$ is obtained by considering the sum of $d_\ell(v_i)$ over all possible $\ell \in [r]$, and therefore
		\begin{align*}
			\frac{d_H(v_i)}{\binom{n-1}{k-1} } & = \frac{1+o(1) }{k^k r^{k-1}(r j_i + \sigma)} \sum_{\ell = 1}^r j_{\ell, i} \binom{k}{j_{\ell, 1}, \dotsc, j_{\ell, r}} \prod_{s=1}^r (r j_s + \sigma )^{j_{\ell, s}}.
		\end{align*}
		The minimum vertex degree of $H$ can be calculated by taking the minimum over $ i \in [r]$.
		This gives the claimed formula.
	\end{proof}
	
	The formula above is enough to compute the values of $f_{k, r}$ for small values of $k, r$ using a simple computer program; see \Cref{table:numerology}.
	
	\begin{table}[h]
		\centering
		\begin{tabular}{c|c||c|c|c|c|c|c|c|c|c}
			$k$ & $m'_{k}$ & $f_{k,2}$ & $f_{k,3}$ & $f_{k,4}$ & $f_{k,5}$ & $f_{k,6}$ & $f_{k,7}$ & $f_{k,8}$ & $f_{k,9}$ & $f_{k,10}$   \\ \hline
			3 & $0.5555 $ & $\color{red}{0.75}$ & $\color{red}{0.6049} $ & $\color{red}{0.5625}$ & $0.5377 $ & $0.5216 $ & $0.5102 $ & $0.5017 $ & $0.4951 $ & $0.4899 $ \\
			4 & $0.5781 $ & $\color{red}{0.6835} $ & $\color{red}{0.5787} $ & $0.5363 $ & $0.5120 $ & $0.4961 $ & $0.4850 $ & $0.4768 $ & $0.4705 $ & $0.4654 $\\
			5 & $0.5903 $ & $\color{red}{0.6561} $ & $0.5641 $ & $0.5220 $ & $0.4978 $ & $0.4822 $ & $0.4713 $ & $0.4632 $ & $0.4570 $ & $0.4521 $ \\
			6 & $0.5981 $ & $\color{red}{0.6472} $ & $0.5549 $ & $0.5129 $ & $0.4889 $ & $0.4734 $ & $0.4626 $ & $0.4546 $ & $0.4485 $ & $0.4437 $ \\
			7 & $0.6034 $ & $\color{red}{0.6410} $ & $0.5485 $ & $0.5066 $ & $0.4827 $ & $0.4674 $ & $0.4567 $ & $0.4487 $ & $0.4427 $ & $0.4379 $ \\
			8 & $0.6073 $ & $\color{red}{0.6365} $ & $0.5438 $ & $0.5020 $ & $0.4782 $ & $0.4630 $ & $0.4523 $ & $0.4444 $ & $0.4384 $ & $0.4336 $ \\
			9 & $0.6102 $ & $\color{red}{0.6330} $ & $0.5402 $ & $0.4985 $ & $0.4748 $ & $0.4596 $ & $0.4490 $ & $0.4412 $ & $0.4352 $ & $0.4304 $ \\
			10 & $0.6125 $ & $\color{red}{0.6302} $ & $0.5374 $ & $0.4957 $ & $0.4721 $ & $0.4569 $ & $0.4464 $ & $0.4386 $ & $0.4326 $ & $0.4279 $\\
			11 & $0.6144 $ & $\color{red}{0.6280} $ & $0.5351 $ & $0.4935 $ & $0.4699 $ & $0.4548 $ & $0.4442 $ & $0.4365 $ & $0.4305 $ & $0.4258 $\\
			12 & $0.6160 $ & $\color{red}{0.6261} $ & $0.5332 $ & $0.4916 $ & $0.4681 $ & $0.4530 $ & $0.4425 $ & $0.4348 $ & $0.4288 $ & $0.4241 $\\
			13 & $0.6173 $ & $\color{red}{0.6245} $ & $0.5316 $ & $0.4901 $ & $0.4666 $ & $0.4515 $ & $0.4410 $ & $0.4333 $ & $0.4274 $ & $0.4227 $\\
			14 & $0.6184 $ & $\color{red}{0.6232} $ & $0.5303 $ & $0.4888 $ & $0.4653 $ & $0.4503 $ & $0.4398 $ & $0.4321 $ & $0.4262 $ & $0.4215 $\\
			15 & $0.6193 $ & $\color{red}{0.6221} $ & $0.5291 $ & $0.4876 $ & $0.4642 $ & $0.4492 $ & $0.4387 $ & $0.4310 $ & $0.4251 $ & $0.4205 $\\
			16 & $0.6201 $ & $\color{red}{0.6211} $ & $0.5281 $ & $0.4866 $ & $0.4632 $ & $0.4482 $ & $0.4378 $ & $0.4301 $ & $0.4242 $ & $0.4196 $\\
			17 & $0.6209 $ & $0.6202 $ & $0.5272 $ & $0.4858 $ & $0.4624 $ & $0.4474 $ & $0.4370 $ & $0.4293 $ & $0.4234 $ & $0.4188 $\\
			18 & $0.6215 $ & $0.6194 $ & $0.5264 $ & $0.4850 $ & $0.4616 $ & $0.4467 $ & $0.4362 $ & $0.4286 $ & $0.4227 $ & $0.4181 $\\
			19 & $0.6221 $ & $0.6187 $ & $0.5257 $ & $0.4843 $ & $0.4610 $ & $0.4460 $ & $0.4356 $ & $0.4279 $ & $0.4221 $ & $0.4174 $\\
			20 & $0.6226 $ & $0.6181 $ & $0.5251 $ & $0.4837 $ & $0.4604 $ & $0.4454 $ & $0.4350 $ & $0.4274 $ & $0.4215 $ & $0.4169 $\\
            21 & $0.6231$ & $0.6175$ & $0.5245$ & $0.4831$ & $0.4598$ & $0.4449$ & $0.4345$ & $0.4269$ & $0.4210$ & $0.4164$ \\
            22 & $0.6235$ & $0.6170$ & $0.5240$ & $0.4826$ & $0.4593$ & $0.4444$ & $0.4340$ & $0.4264$ & $0.4205$ & $0.4159$
		\end{tabular}
		\vspace{1em}
		\caption{Computed values of $f_{k,r}$ for $3\leq k\leq 22$ and $2\leq r\leq 10$, truncated to the fourth decimal. Marked in red are the the values of $f_{k,r}$ that exceed $m_{k}':=1-\left(\frac{k-1}{k}\right)^{k-1}$, the conjectured value for $m_k$.}
		\label{table:numerology}
	\end{table}
	
	
	We further simplify the formula of \cref{lemma:fkr-reduction1} by removing the minimum.
	
	\begin{lemma} \label{lemma:fkr-reduction2}
		For each $k, r \geq 2$ let $(\mathbf{j}, \sigma)$ be a $k$-valid pair with $j_1 \geq j_2 \geq \dotsb \geq j_r$.
		Then we have
		\[ f_{(\mathbf{j}, \sigma)} =  \frac{\Pi(\mathbf{j}, \sigma) }{k^k r^{k-1}(r j_r + \sigma)} \left[ j_r \Lambda(\mathbf{j}, \sigma) + \sigma \left( \frac{(2k+1-\sigma)(r j_r + \sigma)}{2 j_r + 1 + \sigma} \right)^{\sigma} \right],  \]
		where
		\[ \Pi(\mathbf{j}, \sigma) \coloneqq \binom{k - \sigma}{j_{1}, \dotsc, j_{r}} \prod_{s=1}^r \left( r j_s + \sigma \right)^{j_{s}} \]
		and
		\[ \Lambda(\mathbf{j}, \sigma) \coloneqq \sum_{\ell = 1}^r \left( \frac{(2k+1-\sigma)(r j_\ell + \sigma)}{2 j_\ell + 1 + \sigma} \right)^{\sigma}. \]
	\end{lemma}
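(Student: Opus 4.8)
The plan is to start from the formula of \cref{lemma:fkr-reduction1} and carry out two steps: (i) an algebraic simplification that rewrites the $i$-th term inside that minimum — which I denote $g(i)$ — in the form
\[
g(i)=\frac{\Pi(\mathbf j,\sigma)}{k^k r^{k-1}(rj_i+\sigma)}\bigl(j_i\,\Lambda(\mathbf j,\sigma)+\sigma\lambda_i\bigr),\qquad \lambda_i:=\left(\frac{(2k+1-\sigma)(rj_i+\sigma)}{2j_i+1+\sigma}\right)^{\sigma},
\]
so that $\Lambda(\mathbf j,\sigma)=\sum_{\ell}\lambda_\ell$ and $g(r)$ is exactly the asserted value of $f_{(\mathbf j,\sigma)}$; and (ii) show that, under the ordering $j_1\ge\cdots\ge j_r$, one has $g(i)\ge g(r)$ for every $i$, so that the minimum in \cref{lemma:fkr-reduction1} is attained at $i=r$. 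For step (i) I would use the elementary identities $\tfrac{k!}{(k-\sigma)!}=\bigl(\tfrac{2k+1-\sigma}{2}\bigr)^{\sigma}$ and $\tfrac{j_\ell!}{(j_\ell+\sigma)!}=\bigl(\tfrac{2j_\ell+1+\sigma}{2}\bigr)^{-\sigma}$ (both valid because $\sigma\in\{\pm1\}$), together with $\prod_{s}(rj_s+\sigma)^{j_{\ell,s}}=(rj_\ell+\sigma)^{\sigma}\prod_s(rj_s+\sigma)^{j_s}$, to check that $\binom{k}{j_{\ell,1},\dots,j_{\ell,r}}\prod_s(rj_s+\sigma)^{j_{\ell,s}}=\Pi(\mathbf j,\sigma)\,\lambda_\ell$; since $j_{\ell,i}$ equals $j_i+\sigma$ if $\ell=i$ and $j_i$ otherwise, summing over $\ell$ gives $\sum_\ell j_{\ell,i}\lambda_\ell=j_i\Lambda(\mathbf j,\sigma)+\sigma\lambda_i$, which is the claimed shape. ($k$-validity guarantees $rj_s+\sigma>0$ for all $s$ and, when $\sigma=-1$, $j_s\ge1$ for all $s$; in particular $\Pi(\mathbf j,\sigma)>0$.)

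For step (ii), since $\Pi(\mathbf j,\sigma)/(k^kr^{k-1})>0$, it suffices to prove $h(j_i)\ge h(j_r)$ for the single-variable function $h(j):=\bigl(j\Lambda(\mathbf j,\sigma)+\sigma\lambda(j)\bigr)/(rj+\sigma)$, where $\lambda(j):=\bigl(\tfrac{(2k+1-\sigma)(rj+\sigma)}{2j+1+\sigma}\bigr)^{\sigma}$ and $\Lambda(\mathbf j,\sigma)=\sum_\ell\lambda(j_\ell)$ is a constant. I would split into $\sigma=+1$ and $\sigma=-1$. For $\sigma=1$ one has $\lambda(j)=\tfrac{k(rj+1)}{j+1}$, which is increasing in $j$, and $h(j)=\tfrac{j\Lambda(\mathbf j,\sigma)}{rj+1}+\tfrac{k}{j+1}$; putting the two fractions over a common denominator and factoring out $j_i-j_r$ yields
\[
h(j_i)-h(j_r)=(j_i-j_r)\left(\frac{\Lambda(\mathbf j,\sigma)}{(rj_i+1)(rj_r+1)}-\frac{k}{(j_i+1)(j_r+1)}\right).
\]
As $j_i\ge j_r$, it is enough that the bracket is nonnegative; since $\lambda$ is increasing and $j_\ell\ge j_r$ for all $\ell$, we have $\Lambda(\mathbf j,\sigma)\ge\lambda(j_i)+(r-1)\lambda(j_r)$, and using $\lambda(j_i)(j_i+1)=k(rj_i+1)$ and $\lambda(j_r)(j_r+1)=k(rj_r+1)$ the required inequality reduces to $(r-1)\bigl(j_i+(r-1)j_r+1\bigr)\ge0$, which is clear.

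The case $\sigma=-1$ runs along the same lines: now $\lambda(j)=\tfrac{j}{(k+1)(rj-1)}$ is \emph{decreasing} on $j\ge1$, and the analogous manipulation shows that the sign of $h(j_i)-h(j_r)$ equals that of $(j_i-j_r)\bigl(\tfrac{r^2j_ij_r-1}{k+1}-\Lambda(\mathbf j,\sigma)(rj_i-1)(rj_r-1)\bigr)$; bounding $\Lambda(\mathbf j,\sigma)\le\lambda(j_i)+(r-1)\lambda(j_r)$ (using that $\lambda$ is decreasing and $j_\ell\ge j_r$) together with the identity $(k+1)\lambda(j)(rj-1)=j$ reduces the claim to $j_i+(r-1)j_r\ge1$, which holds since $j_r\ge1$. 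Hence $h(j_i)\ge h(j_r)$ in all cases, so $\min_i g(i)=g(r)$, and by \cref{lemma:fkr-reduction1} this common value is $f_{(\mathbf j,\sigma)}$; since $g(r)=\tfrac{\Pi(\mathbf j,\sigma)}{k^kr^{k-1}(rj_r+\sigma)}\bigl(j_r\Lambda(\mathbf j,\sigma)+\sigma\lambda_r\bigr)$, this is precisely the asserted closed form. I expect the main obstacle to be the bookkeeping in step (i) — keeping the multinomial-coefficient identities, the factor $j_{\ell,i}$, and the two signs of $\sigma$ straight — together with spotting in step (ii) that the monotonicity of $\lambda$ combined with the ordering $j_1\ge\cdots\ge j_r$ is exactly what collapses the final inequalities to one-liners; beyond that the argument is routine calculation.
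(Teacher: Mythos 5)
Your proof is correct. Step (i) — rewriting the $i$-th term of \cref{lemma:fkr-reduction1} as $\Pi(\mathbf j,\sigma)\,(j_i\Lambda+\sigma\lambda_i)/(k^kr^{k-1}(rj_i+\sigma))$ via the factorial ratio identities — is the same algebraic manipulation the paper carries out. Step (ii) takes a genuinely different route. The paper passes to the real-variable function $g(x)=\Lambda x/(rx+1)+k/(x+1)$, computes $g'$, shows $g$ is unimodal (increasing to a unique maximum $x^\ast$, then decreasing), and concludes $g(j_r)\le g(x)$ for $x\ge j_r$ by verifying both $g'(j_r)\ge 0$ and $g(j_r)\le\lim_{x\to\infty}g(x)$; the $\sigma=-1$ case is declared ``analogous'' without details. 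You instead compute the finite difference $h(j_i)-h(j_r)$ directly, factor out $(j_i-j_r)$, and reduce the sign question to the discrete bound $\Lambda\ge\lambda(j_i)+(r-1)\lambda(j_r)$ (resp.\ $\le$ for $\sigma=-1$), which collapses to the one-line inequalities $(r-1)(j_i+(r-1)j_r+1)\ge 0$ and $j_i+(r-1)j_r\ge 1$. I verified the algebra in both cases, including the $\sigma=-1$ identity $(k+1)\lambda(j)(rj-1)=j$ and the factorisation $j_i(rj_r-1)^2-j_r(rj_i-1)^2=-(j_i-j_r)(r^2j_ij_r-1)$; all steps check out, and $j_r\ge 1$ is indeed guaranteed by $k$-validity when $\sigma=-1$. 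Your approach is purely algebraic (no calculus), uses a slightly sharper lower bound on $\Lambda$ (one term $\lambda(j_i)$ plus $(r-1)$ copies of $\lambda(j_r)$, rather than $r$ copies of $\lambda(j_r)$ as in the paper), and — perhaps most usefully — writes out the $\sigma=-1$ case explicitly rather than leaving it as an exercise; the paper's calculus route is a little slicker conceptually because the unimodality observation handles all comparisons at once after only two spot checks.
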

	
	\begin{proof}
		We start from the formula given by \cref{lemma:fkr-reduction1}.
		Let $d_i$ denote the $i$th term in the minimum of that formula, so that $f_{(\mathbf{j}, \sigma)} = \min\{ d_1, \dotsc, d_r\}$.
		The idea is to rewrite each $d_i$ by incorporating the multinomial coefficient with a $k-\sigma$ term as above.
		Using that $j_{\ell, i} = j_i$ if $\ell \neq i$ and $j_{i, i} = j_i + \sigma$, and distinguishing the cases $\sigma \in \{-1, +1\}$ we get
		\begin{align*}
			\binom{k}{j_{\ell, 1}, \dotsc, j_{\ell, r}} \prod_{s=1}^r (r j_s + \sigma )^{j_{\ell, s}}
			& = \left( \frac{(2k+1-\sigma)(r j_\ell + \sigma)}{2 j_\ell + 1 + \sigma} \right)^{\sigma} \binom{k - \sigma}{j_{1}, \dotsc, j_{r}} \prod_{s=1}^r (r j_s + \sigma )^{j_{s}} \\
			& = \left( \frac{(2k+1-\sigma)(r j_\ell + \sigma)}{2 j_\ell + 1 + \sigma} \right)^{\sigma} \Pi(\mathbf{j}, \sigma).
		\end{align*}
		So, using the values $j_{\ell, i}$ again, we get\begin{align*}
			d_i & = \frac{\Pi(\mathbf{j}, \sigma)}{k^k r^{k-1}(r j_i + \sigma)} \left[ j_i \Lambda(\mathbf{j}, \sigma) + \sigma \left( \frac{(2k+1-\sigma)(r j_i + \sigma)}{2 j_i + 1 + \sigma} \right)^{\sigma} \right]. 
		\end{align*}
		
		Note that the lemma states that $f_{(\mathbf{j}, \sigma)}=d_r = \min_{1 \leq i \leq r} d_i$.
		Let
		\[ g(\mathbf{j}, \sigma, i) := \frac{1}{r j_i + \sigma} \left[ j_i \Lambda(\mathbf{j}, \sigma) + \sigma \left( \frac{(2k+1-\sigma)(r j_i + \sigma)}{2 j_i + 1 + \sigma} \right)^{\sigma} \right], \]
		and note that it is enough to check that $i=r$ minimises $g(\mathbf{j}, \sigma, i)$.
		
		We begin our analysis with the case $\sigma = 1$.
		In this case, we have that
		\[ \Lambda(\mathbf{j}, 1) \coloneqq k \sum_{\ell = 1}^r \frac{r j_\ell + 1}{j_\ell + 1}. \]
		Note that the function $j \mapsto (r j +1)/(j+1)$ is monotone increasing, so each term in the sum is at least $\frac{r j_r + 1}{j_r + 1}$; therefore $\Lambda(\mathbf{j}, 1) \geq k r \frac{r j_r + 1}{j_r + 1} \geq kr$.
		On the other hand, we also have that each term in the sum is strictly less than $r$, so we also have $\Lambda(\mathbf{j}, 1) < kr^2$.
		
		Now, note that
		\[ g(\mathbf{j}, 1, i) = \frac{j_i \Lambda(\mathbf{j}, 1)}{r j_i + 1} + \frac{k}{j_i + 1}. \]
		We claim that this is minimised when $i=r$.
		To see this, consider the function
		\[ g(x) := \frac{\Lambda(\mathbf{j}, 1)x}{rx+1} + \frac{k}{x+1} \]
		for $x \geq 0$.
		We claim that $g(j_r) \leq g(x)$ holds for each $x \geq j_r$: this clearly implies the inequality we wanted.
		
		A straightforward calculation reveals that the derivative $g'$ is
		\[ g'(x) = \frac{\Lambda(\mathbf{j}, 1)(x+1)^2 - k(rx+1)^2}{(x+1)^2(rx+1)^2}. \]
		Since the denominator is positive for all $x \geq 0$, the change of sign of $g'$ is determined by the numerator, which is a quadratic function.
		The leading coefficient of the $x^2$ term in the numerator is $\Lambda(\mathbf{j},1) - kr^2 < 0$, so $g'(x)$ is eventually negative for large $x$.
		Moreover, we have that $g'(0) \geq 0$, so we can deduce  for positive $x$ that $g'(x)$ changes sign exactly once.
		Let  $x^\ast > 0$ be the  unique maximum of  $g(x)$; then $g(x)$ is monotone increasing in $[0, x^\ast)$ and monotone decreasing in $[x^\ast, \infty)$.
		To see that $g(j_r) \leq g(x)$ holds for each $x \geq j_r$ it is enough to check that $j_r \leq x^\ast$, and that the inequality $g(j_r) \leq \lim_{x \rightarrow \infty}g(x)$ holds.
		
		Indeed, we have
		\begin{align*}
			g(j_r)
			& = \frac{\Lambda(\mathbf{j}, 1) j_r}{r j_r + 1} + \frac{k}{j_r + 1}
			= \frac{\Lambda(\mathbf{j}, 1)}{r}  - \frac{\Lambda(\mathbf{j}, 1)}{r( r j_r + 1)} + \frac{k}{j_r + 1} 
			\leq \frac{\Lambda(\mathbf{j}, 1)}{r}  = \lim_{x \rightarrow \infty}g(x),
		\end{align*}
		where we used $\Lambda(\mathbf{j}, 1) \geq k r \frac{r j_r + 1}{j_r + 1}$ in the inequality.
		
		To see that $j_r \leq x^\ast$ it is enough to check that $g'(j_r) \geq 0$, for which it is enough to see that $\Lambda(\mathbf{j}, 1)(j_r+1)^2 - k(rj_r+1)^2 \geq 0$.
		This indeed holds, because
		\[ \left( \frac{rj_r + 1}{j_r + 1} \right)^2 < r \frac{rj_r + 1}{j_r + 1} \leq \frac{\Lambda(\mathbf{j}, 1)}{k}, \]
		where we used $\Lambda(\mathbf{j}, 1) \geq k r \frac{r j_r + 1}{j_r + 1}$ again.
		We conclude that indeed, if $\sigma = 1$, $d_i$ is minimised when $r=i$, as desired.
		
		The case where $\sigma = -1$ can be proven in an analogous way.
  \end{proof}

	Now, from the formula given by \cref{lemma:fkr-reduction2}, we can show that $f_{(\mathbf{j}, \sigma)}$ is maximised whenever $\sigma = 1$ and $\mathbf{j} = (k-1, 0, \dotsc, 0)$.
    We begin our analysis with the case $k=2$.

    \begin{lemma} \label{lemma:maximizer-graph}
        For all $r \geq 2$, $f_{2, r} = f_{((1,0,\dotsc, 0),1)} = \frac{r+1}{2r}$.
    \end{lemma}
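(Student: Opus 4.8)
The plan is to use that for $k=2$ there are only very few $k$-valid pairs $(\mathbf{j},\sigma)$, to enumerate them all, and to compute $f_{(\mathbf{j},\sigma)}$ for each directly from \cref{def:extremal-graphs}; one could instead substitute $k=2$ into \cref{lemma:fkr-reduction2}, but the direct count is cleaner. First note that permuting the coordinates of $\mathbf{j}$, together with the matching relabelling of parts and colours, yields an isomorphic coloured $k$-graph, so $f_{(\mathbf{j},\sigma)}$ is unchanged and we may assume $j_1\geq j_2\geq\dots\geq j_r$. For $k=2$, $k$-validity requires $\sigma+\sum_i j_i=2$ and $j_i+\sigma\geq0$ for all $i$. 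If $\sigma=1$ this forces $\sum_i j_i=1$, hence $\mathbf{j}=(1,0,\dots,0)$. If $\sigma=-1$ it forces $\sum_i j_i=3$ with every $j_i\geq1$, which is possible only when $r\in\{2,3\}$: then $\mathbf{j}=(2,1)$ for $r=2$ and $\mathbf{j}=(1,1,1)$ for $r=3$. So it remains to evaluate $f_{(\mathbf{j},\sigma)}$ at these (at most two) pairs.

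For each such pair, $f_{(\mathbf{j},\sigma)}$ is the limit as $n\to\infty$ of $\delta_1(H)/\binom{n-1}{1}=\delta_1(H)/(n-1)$, where $H$ is the edge-maximal member of $\mathcal{F}_{2,r}(\mathbf{j},\sigma)$ on $n$ vertices ($n$ divisible by $2r$), with parts of sizes $|V_i|=(rj_i+\sigma)n/(2r)$ and all edges of type $\mathbf{j}+\sigma\mathbf{e}_i$ coloured $i$. Since $k=2$, every such type has at most two nonzero coordinates, so (as an edge set) $H$ is the union of a clique on one part and complete bipartite graphs between certain pairs of parts; the degree of a vertex $v$ is just the number of vertices lying in the parts joined to $v$. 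Carrying this out: for $((1,0,\dots,0),1)$, colour $1$ is a clique on $V_1$ and colour $i\ (i\geq2)$ is complete bipartite between $V_1$ and $V_i$, so vertices of $V_1$ have degree $n-1$, vertices of $V_i\ (i\geq2)$ have degree $|V_1|=(r+1)n/(2r)$, and $\delta_1(H)=(r+1)n/(2r)$. For $((2,1),-1)$ with $r=2$, colour $1$ is bipartite between $V_1$ and $V_2$ and colour $2$ is a clique on $V_1$, so $\delta_1(H)=|V_1|=3n/4$. For $((1,1,1),-1)$ with $r=3$, colour $i$ is bipartite between the two parts other than $V_i$, so every vertex has degree $2n/3$ and $\delta_1(H)=2n/3$. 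In all three cases $\delta_1(H)/(n-1)\to\tfrac{r+1}{2r}$, using $3/4=\tfrac{2+1}{2\cdot2}$ and $2/3=\tfrac{3+1}{2\cdot3}$.

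Consequently every $k$-valid pair satisfies $f_{(\mathbf{j},\sigma)}=\tfrac{r+1}{2r}$, and since $f_{2,r}$ is the maximum of $f_{(\mathbf{j},\sigma)}$ over all $k$-valid pairs $(\mathbf{j},\sigma)$, we get $f_{2,r}=\tfrac{r+1}{2r}=f_{((1,0,\dots,0),1)}$, as claimed. There is no genuine obstacle: the whole argument is a finite case check, and the only point requiring care is not to miss the $\sigma=-1$ pairs, which occur precisely for $r\in\{2,3\}$.
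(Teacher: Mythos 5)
Your proof is correct, and it is essentially the same approach as the paper's: you enumerate the finitely many $k$-valid pairs for $k=2$ (namely $((1,0,\dotsc,0),1)$ always, $((2,1),-1)$ for $r=2$, and $((1,1,1),-1)$ for $r=3$) and check that each gives $f_{(\mathbf{j},\sigma)} = \tfrac{r+1}{2r}$. The only difference is that you compute the minimum degree of the extremal graph directly from first principles, while the paper invokes the symmetry $f_{((2,1),-1)} = f_{((1,0),1)}$ and merely states the value $\tfrac{2}{3}$ in the $(1,1,1)$ case; your extra detail is harmless.
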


    \begin{proof}
        For $\sigma = 1$, up to symmetry, the only possible $\mathbf{j}$ which makes $(\mathbf{j}, \sigma)$ a $2$-valid tuple is $(1, 0, \dotsc, 0)$; so it suffices to rule out the case $\sigma = -1$.
        For $\sigma = -1$, up to symmetry, the only possible $\mathbf{j}$ which make $(\mathbf{j}, \sigma)$ a $2$-valid tuple are $(2,1)$, for $r=2$, and $(1,1,1)$ for $r=3$.
        In the first case, we actually have $f_{((2,1),-1)} = f_{((1,0),1)}$; and in the second case we have $f_{((1,1,1),-1)} = \frac{2}{3} = f_{((1,0,0),1)}$.
    \end{proof}
    
    Next, we verify the fact for all large $k$ or $r$.

    \begin{lemma} \label{lemma:maximiser-weak}
		Let $k \geq 23$ or $r \geq 5$.
		Then $f_{k,r} = \left( 1- \frac{r-1}{kr} \right)^{k-1}$.
	\end{lemma}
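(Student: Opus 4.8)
The plan is to evaluate $f_{(\mathbf j,\sigma)}$ for every $k$-valid pair $(\mathbf j,\sigma)$ via the closed formula of \cref{lemma:fkr-reduction2}, and to show that, under the hypothesis $k\ge 23$ or $r\ge 5$, its maximum is attained at the pair $(\mathbf j^{\star},\sigma^{\star}):=\big((k-1,0,\dots,0),1\big)$. Substituting $(\mathbf j^{\star},1)$: here $j_r=0$, so the bracket in \cref{lemma:fkr-reduction2} collapses to $\tfrac{k(rj_r+1)}{j_r+1}=k$, while $\Pi(\mathbf j^{\star},1)=(r(k-1)+1)^{k-1}$; a short computation then gives $f_{(\mathbf j^{\star},1)}=\big(\tfrac{r(k-1)+1}{rk}\big)^{k-1}=\big(1-\tfrac{r-1}{rk}\big)^{k-1}$, as claimed. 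So it remains to prove $f_{(\mathbf j,\sigma)}\le\big(1-\tfrac{r-1}{rk}\big)^{k-1}$ for every other $k$-valid pair; I would split according to $\sigma$ and to whether the smallest coordinate $j_r$ is zero.

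\textbf{Case $\sigma=1$ with $j_r=0$.} Again the bracket equals $k$, so $f_{(\mathbf j,1)}=\binom{k-1}{\mathbf j}\prod_s(rj_s+1)^{j_s}\big/(rk)^{k-1}$, and the claim reduces to the arithmetic inequality $\binom{k-1}{\mathbf j}\prod_s(rj_s+1)^{j_s}\le(r(k-1)+1)^{k-1}$ over all compositions $\mathbf j$ of $k-1$ (with equality only at $\mathbf j^{\star}$). I would prove this by smoothing: replacing two positive coordinates $j_a\ge j_b\ge1$ by their sum multiplies the left-hand side by the factor $\big(r(j_a+j_b)+1\big)^{j_a+j_b}\big/\big(\binom{j_a+j_b}{j_a}(rj_a+1)^{j_a}(rj_b+1)^{j_b}\big)$, so it suffices to show $\binom{a+b}{a}(ra+1)^a(rb+1)^b\le\big(r(a+b)+1\big)^{a+b}$ for all integers $a\ge b\ge1$ and $r\ge2$. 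This one-variable inequality follows by taking logarithms, bounding $\binom{a+b}{b}$ by Stirling and using $\big(1-\tfrac{r}{r(a+b)+1}\big)^{a}\le e^{-ar/(r(a+b)+1)}$; the margin is comfortable (the ratio tends to $\tfrac{(rb+1)^b}{b!\,r^b e^b}\le\tfrac1{\sqrt{2\pi b}}<1$ as $a\to\infty$). Iterating the merge collapses $\mathbf j$ to $\mathbf j^{\star}$, so this case needs no hypothesis on $k$ or $r$.

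\textbf{Cases $\sigma=1$ with $j_r\ge1$ (forcing $k\ge r+1$) and $\sigma=-1$ (forcing $j_s\ge1$ for all $s$ and $\sum_s j_s=k+1$, hence $r\le k+1$).} By \cref{lemma:fkr-reduction1} and \cref{lemma:fkr-reduction2} (and the proof of the former), $f_{(\mathbf j,\sigma)}$ is the limiting relative degree of a vertex of the smallest class $V_r$, which is the sum of the at most $r$ multinomial point probabilities $\Pr[\mathrm{Mult}(k-1;\mathbf x)=\mathbf j+\sigma\mathbf e_\ell-\mathbf e_r]$ with $x_s=(rj_s+\sigma)/(rk)$, each profile differing from the mean $(k-1)\mathbf x$ by $O(1)$ coordinatewise. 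I would bound the ratio $f_{(\mathbf j,\sigma)}/\big(1-\tfrac{r-1}{rk}\big)^{k-1}$ via \cref{lemma:fkr-reduction2} and Stirling; as $k\to\infty$ (with $r$ and the shape of $\mathbf j$ fixed) it tends to a limit that is always at most $1$, and strictly below $1$ except for a bounded list of ``extreme'' shapes --- notably $((k,1),-1)$ for $r=2$, which attains the candidate value \emph{exactly}, and the concentrated shapes $\mathbf j=(k-r+2,1,\dots,1)$ with $\sigma=-1$ and $\mathbf j$ with $j_r=1$, $\sigma=1$. The hypothesis $k\ge 23$ or $r\ge5$ is precisely what makes all of these finitely many relevant ratios genuinely $<1$ (the decay comes from the polynomial-in-$k$ Stirling correction of the ``large'' coordinate, and/or the exponential-in-$r$ factor produced by the many coordinates equal to $1$), with the extreme shapes handled by direct manipulation of the closed formula. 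Putting the estimates together gives $f_{(\mathbf j,\sigma)}\le\big(1-\tfrac{r-1}{rk}\big)^{k-1}$ for every such pair, and hence $f_{k,r}=\max_{(\mathbf j,\sigma)}f_{(\mathbf j,\sigma)}=\big(1-\tfrac{r-1}{rk}\big)^{k-1}$.

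The hard part will be the last two cases: there $f_{(\mathbf j,\sigma)}$ is a genuine sum of several multinomial point masses rather than a single one, so one must control the mode of a multinomial whose parameters themselves depend on $\mathbf j$, and it is the Stirling bookkeeping --- both to beat the candidate and to pin down the thresholds $k\ge 23$, $r\ge5$ and the handful of equality cases done separately --- that is delicate; by contrast the smoothing used when $j_r=0$ is routine. (For smaller $k$ with $r\le4$ these estimates no longer close, which is why the unconditional \cref{lemma:maximiser} additionally invokes a finite computer search.)
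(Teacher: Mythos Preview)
Your overall strategy is the paper's: compute $f_{(\mathbf j^\star,1)}$ explicitly, then bound every other $f_{(\mathbf j,\sigma)}$ via \cref{lemma:fkr-reduction2} and Stirling (\cref{lemma:robbins}). The one genuine difference is your treatment of the case $\sigma=1$, $j_r=0$ by a merge-two-parts smoothing, reducing to the inequality $\binom{a+b}{a}(ra+1)^a(rb+1)^b\le(r(a+b)+1)^{a+b}$. The paper instead lets $s$ be the number of nonzero coordinates, notes that $j_r=0$ forces the bracket of \cref{lemma:fkr-reduction2} to collapse (their $W(\mathbf j)=1$), and applies \cref{lemma:robbins} directly. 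Your smoothing is a legitimate alternative and, like the paper's Case~1, needs no hypothesis on $k,r$ --- but your justification is only an $a\to\infty$ limit; the inequality for all $a\ge b\ge1$ still has to be proved (e.g.\ by observing that $(ra+1)+(rb+1)=r(a+b)+2$ and controlling the overshoot over a binomial point mass).

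The real gap is in the remaining cases ($\sigma=1$ with $j_r\ge1$, and $\sigma=-1$), which you correctly flag as the hard part but do not actually carry out. The paper's proof shows there is no single uniform estimate here: after \cref{lemma:robbins}, the ratio $f_{(\mathbf j,\sigma)}/f^\ast_{k,r}$ still carries a factor $W(\mathbf j)$ that can be as large as $r+\tfrac12$, and the residual $\sqrt{(k-1)/\prod j_i}$ is only just small enough. For $s=r\ge5$ a crude bound suffices (this is where the hypothesis $r\ge5$ enters), but for $s=r\in\{2,3,4\}$ the paper must split further on the value of $j_2$ (and $j_3$), with a separate explicit computation in each subcase; the threshold $k\ge23$ is exactly what is needed for $r=2$, $j_2=4$ to close, and the subcases $j_2\in\{1,2,3\}$ each require their own estimate (for $j_2=1$ the paper abandons the Stirling bound and computes the ratio directly). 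Your phrase ``the hypothesis is precisely what makes all of these finitely many relevant ratios genuinely $<1$'' is a correct summary of the outcome, but as it stands you have neither identified the case split nor shown why any individual case closes --- and the paper's argument makes clear that this bookkeeping \emph{is} the proof in this regime.
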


	In fact, this is all we need to prove \cref{lemma:maximiser} and \cref{lemma:maximiser-remaining}, as the values not covered by the previous two lemmata are finitely many and can be checked by computer (all of them appear in \Cref{table:numerology}).
	We will need the following bounds, which follows by using sharp versions of Stirling's approximation to bound the factorials~\cite{Robbins1955}.
	
	\begin{lemma} \label{lemma:robbins}
        Let $r \geq 2$, $\sigma \in \{-1, +1\}$ and $j_1, \dotsc, j_r \geq 1$ such that $\sum_{i = 1}^r j_i = k - \sigma$. Then
		\[ \binom{k-\sigma}{j_1, \dotsc, j_r} \leq
		\frac{1}{\sqrt{2 \pi}^{r-1}}\sqrt{\frac{k-\sigma}{\prod_{i = 1}^r j_i}}\frac{(k-\sigma)^{(k-\sigma)}}{\prod_{i = 1}^r j_i^{j_i}}.\]
	\end{lemma}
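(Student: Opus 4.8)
\textbf{Proof proposal for \cref{lemma:robbins}.}
The plan is to apply Robbins' sharpened Stirling bounds~\cite{Robbins1955}, which state that for every integer $m \geq 1$ one has
\[ \sqrt{2\pi m}\left(\frac{m}{e}\right)^m e^{1/(12m+1)} \leq m! \leq \sqrt{2\pi m}\left(\frac{m}{e}\right)^m e^{1/(12m)}. \]
Writing $N := k - \sigma = \sum_{i=1}^r j_i$, I would expand the multinomial coefficient as $\binom{N}{j_1,\dots,j_r} = N! \big/ \prod_{i=1}^r j_i!$, bound the numerator $N!$ from above by the upper Robbins bound and each factor $j_i!$ in the denominator from below by the lower Robbins bound. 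The powers of $e$ arising from $(m/e)^m$ telescope exactly, since $\sum_{i=1}^r j_i = N$, so the net contribution of these exponentials is $\exp\big(1/(12N) - \sum_{i=1}^r 1/(12 j_i + 1)\big)$, which is at most $1$ because $j_i \geq 1$ forces $\sum_i 1/(12j_i+1) \geq \sum_i 1/(13 j_i) \geq 1/(13 N) \cdot$ (number of terms)\,$\ge 1/(13) \ge 1/(12N)$ — more simply, one term alone with $j_i \le N$ gives $1/(12 j_i + 1) \ge 1/(12 N + 1)$, and a short comparison shows this already dominates $1/(12N)$ is false, so instead I would just note $\exp(1/(12N)) \le e^{1/12} \le \sqrt{2\pi}$ is too crude; the cleanest route is to observe $1/(12N) \le \sum_{i}1/(12 j_i + 1)$ whenever $r \ge 1$ and all $j_i \ge 1$, since the right side has at least one summand of size $\ge 1/(12N+1) > 1/(12N) - 1/(12N)^2 \cdots$, hence the exponential factor is $\le 1$ after absorbing the tiny discrepancy into the square-root slack. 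I will present the telescoping of the $(N/e)^N$ versus $\prod (j_i/e)^{j_i}$ part cleanly, which yields the main term $N^N/\prod_i j_i^{j_i}$ exactly.

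The remaining factor comes from the square roots: $\sqrt{2\pi N}$ in the numerator against $\prod_{i=1}^r \sqrt{2\pi j_i}$ in the denominator, giving $\sqrt{2\pi N} \big/ \big( (2\pi)^{r/2} \sqrt{\prod_i j_i}\big) = (2\pi)^{-(r-1)/2} \sqrt{N / \prod_i j_i}$, which is precisely the claimed prefactor $\frac{1}{\sqrt{2\pi}^{\,r-1}}\sqrt{\frac{k-\sigma}{\prod_{i=1}^r j_i}}$. So after the telescoping and the square-root bookkeeping, the statement follows provided the leftover exponential factor is at most $1$.

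The only genuinely delicate point is verifying that $\exp\!\big(1/(12N) - \sum_{i=1}^r 1/(12 j_i + 1)\big) \le 1$, i.e.\ that $1/(12N) \le \sum_{i=1}^r 1/(12 j_i + 1)$. This is where I would spend a line or two: since the function $x \mapsto 1/(12x+1)$ is convex on $(0,\infty)$ and $\sum_i j_i = N$, one could invoke convexity, but more elementarily, each $j_i \le N$ gives $1/(12 j_i + 1) \ge 1/(12 N + 1) \ge \frac{1}{12N+1}$; if $r \ge 2$ we already get $\sum_i 1/(12j_i+1) \ge 2/(12N+1) > 1/(12N)$ for all $N \ge 1$, and since $r \ge 2$ by hypothesis this case is the only one needed. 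Thus the exponential factor is $< 1$ and the bound holds. The whole argument is short and the main obstacle is purely cosmetic: making sure the exponential error term is handled rigorously rather than waved away; everything else is the mechanical substitution of Robbins' inequalities and the exact telescoping afforded by $\sum_i j_i = k - \sigma$.
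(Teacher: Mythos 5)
Your proof is correct and follows precisely the approach the paper gestures at (it only cites Robbins~\cite{Robbins1955} without spelling out the computation): the upper Robbins bound on $(k-\sigma)!$, the lower bounds on each $j_i!$, the exact telescoping of the $e$-powers, and the square-root bookkeeping all go through as you describe. The messy false starts in your second paragraph should be deleted — in particular the claim ``whenever $r\ge 1$'' is wrong for $r=1$ — and replaced by the clean argument you give at the end: with $r\ge 2$ and each $j_i\le k-\sigma$, one has $\sum_i 1/(12j_i+1)\ge 2/(12(k-\sigma)+1)>1/(12(k-\sigma))$, so the exponential factor is $<1$.
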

	
	\begin{proof}[Proof of \cref{lemma:maximiser-weak}]
		Fix $k, r$ as in the statement, and let $f^\ast_{k,r} := \left( 1- \frac{r-1}{kr} \right)^{k-1}$.
		Note that in the case where $\sigma = 1$ and $\mathbf{j} = (k-1, 0, \dotsc, 0)$, we have that $f_{(\mathbf{j}, \sigma)} = f^\ast_{k,r}$.
		Hence, we need to show that for any other $k$-valid pair $(\mathbf{j}, \sigma)$, we have that $f_{(\mathbf{j}, \sigma)} \leq f^\ast_{k,r}$.
		
		We begin by considering the case $\sigma = 1$.
		Let $\mathbf{j} = (j_1, \dotsc, j_r)$ satisfy $j_1 \geq \dotsb \geq j_r \geq 0$ and $\sum_{\ell = 1}^r j_\ell = k-1$.
		By \cref{lemma:fkr-reduction2}, we have
		\begin{align*} f_{(\mathbf{j}, \sigma)}
			& =  \frac{ \prod_{\ell=1}^r \left( r j_\ell + 1 \right)^{j_{\ell}} }{k^k r^{k-1}(r j_r + 1)} \binom{k - 1}{j_{1}, \dotsc, j_{r}}  \left[ j_r \left( \sum_{\ell = 1}^r 
			\frac{k(rj_\ell + 1)}{j_\ell + 1} \right)  +  \frac{k(r j_r + 1)}{j_r + 1} \right] \\
			& = \frac{ \prod_{\ell=1}^r \left( r j_\ell + 1 \right)^{j_{\ell}} }{k^{k-1} r^{k-1}} \binom{k - 1}{j_{1}, \dotsc, j_{r}}  \left[ \frac{j_r}{r j_r + 1} \left( \sum_{\ell = 1}^r 
			\frac{rj_\ell + 1}{j_\ell + 1} \right)  +  \frac{1}{j_r + 1} \right] \\
			& = \frac{ \prod_{\ell=1}^r \left( r j_\ell + 1 \right)^{j_{\ell}} }{(kr)^{k-1}} \binom{k - 1}{j_{1}, \dotsc, j_{r}} W(\mathbf{j}),
		\end{align*}
		where in the last step we wrote \[W(\mathbf{j}) := \frac{j_r}{r j_r + 1} \left( \sum_{\ell = 1}^r 
		\frac{rj_\ell + 1}{j_\ell + 1} \right)  +  \frac{1}{j_r + 1}.\]
  
        Let $1 \leq s \leq r$ be the maximal index so that $j_{s} > 0$, which implies that $\prod_{\ell=1}^r \left( r j_\ell + 1 \right)^{j_{\ell}} = \prod_{\ell=1}^s \left( r j_\ell + 1 \right)^{j_{\ell}}$ and
        $\binom{k - 1}{j_{1}, \dotsc, j_{r}} = \binom{k - 1}{j_{1}, \dotsc, j_{s}}$.
        Using this, and writing $f^\ast_{k,r} = (r(k-1) + 1)^{k-1} / (kr)^{k-1}$, we have
		\begin{align} \frac{f_{(\mathbf{j}, \sigma)}}{f^\ast_{k,r}}
			& = \frac{ \prod_{\ell=1}^s \left( r j_\ell + 1 \right)^{j_{\ell}} }{(r (k-1) + 1)^{k-1}} \binom{k - 1}{j_{1}, \dotsc, j_{s}} W(\mathbf{j}). \label{equation:middlestep}
		\end{align}

		Since $j_1 \geq \dotsb \geq j_s \geq 1$, we can use \cref{lemma:robbins} to get
		\begin{align}
            \nonumber
			\frac{f_{(\mathbf{j}, \sigma)}}{f^\ast_{k,r}}
            & \leq \frac{1}{\sqrt{2 \pi}^{s-1}} \frac{ \prod_{\ell=1}^s \left( r j_\ell + 1 \right)^{j_{\ell}} }{(r (k-1) + 1)^{k-1}}\sqrt{\frac{k-1}{\prod_{\ell = 1}^s j_i}}\frac{(k-1)^{(k-1)}}{\prod_{\ell = 1}^s j_i^{j_i}} W(\mathbf{j}), \\ \nonumber
			& = \frac{1}{\sqrt{2 \pi}^{s-1}} \frac{ \prod_{\ell=1}^s \left( r + \frac{1}{j_\ell} \right)^{j_{\ell}} }{(r + \frac{1}{k-1})^{k-1}}\sqrt{\frac{k-1}{\prod_{\ell = 1}^s j_i}}W(\mathbf{j})\\ \nonumber
			& = \frac{1}{\sqrt{2 \pi}^{s-1}} \frac{ \prod_{\ell=1}^s \left( 1 + \frac{1}{r j_\ell} \right)^{j_{\ell}} }{(1 + \frac{1}{r(k-1)})^{k-1}}\sqrt{\frac{k-1}{\prod_{\ell = 1}^s j_i}}W(\mathbf{j}) \\
			& \leq \frac{r}{r+1}\frac{e^{s/r}}{\sqrt{2 \pi}^{s-1}} \sqrt{\frac{k-1}{\prod_{\ell = 1}^s j_i}}W(\mathbf{j}), \label{equation:middlestep2}
		\end{align}
		where in the last step we used that $(1 + 1/\left(rj_\ell)\right)^{j_\ell} \leq e^{1/r}$ in each term of the product, and we also used that $\left(1 + \frac{1}{r(k-1)}\right)^{k-1} \geq (r+1)/r$.
        Moreover, as $\sum_{\ell = 1}^s j_i = k-1$,
        \begin{equation}
            \prod_{\ell = 1}^s j_i \geq j_1 \geq \frac{k-1}{s},
            \label{equation:prodji1}
        \end{equation}
        and, since $j_2 \geq \dotsb \geq j_s \geq 1$, we also bound the product as follows: for a fixed value of $j_2$, the product of $j_i$'s is minimised when $j_3 = \dotsb = j_s = 1$ and $j_1=k-1 - j_2 - (s-2)$.
        Hence,
        \begin{equation}
            \prod_{\ell = 1}^s j_i \geq j_2(k - j_2 - s + 1).
            \label{equation:prodji2}
        \end{equation}
        Now we separate the analysis in some cases.
        \medskip

        \noindent \emph{Case 1: $s < r$.}
        In this case $j_r = 0$, and therefore $W(\mathbf{j}) = 1$.
        Using \eqref{equation:middlestep2}, we get
        \[ \frac{f_{(\mathbf{j}, \sigma)}}{f^\ast_{k,r}} \leq \frac{e^{s/r}}{\sqrt{2 \pi}^{s-1}} \sqrt{\frac{k-1}{\prod_{\ell = 1}^s j_i}}, \]
which together with \eqref{equation:prodji1} gives
        \[ \frac{f_{(\mathbf{j}, \sigma)}}{f^\ast_{k,r}} \leq \frac{e^{s/r} \sqrt{s}}{\sqrt{2 \pi}^{s-1}} < \frac{e \sqrt{s}}{\sqrt{2 \pi}^{s-1}}\le 1, \]
    provided  $s \geq 3$.  The case $s=1$ is trivial, so we can assume that $s = 2$.
    Now, using \eqref{equation:prodji2}, we get $j_1 j_2 \geq j_2(k-j_2-1) \geq k-2$, so by \eqref{equation:middlestep2} we have that
        \[  \frac{f_{(\mathbf{j}, \sigma)}}{f^\ast_{k,r}} \leq \frac{r}{r+1}   \frac{e^{2/r}}{\sqrt{2 \pi}} \sqrt{\frac{k-1}{\prod_{\ell = 1}^2 j_i}} \leq \frac{r}{r+1} \frac{e^{2/r}}{\sqrt{2 \pi}} \sqrt{\frac{k-1}{k-2}}. \]
        We have $r > s = 2$, so $r \geq 3$. Note that $re^{2/r}/(r+1) \leq 3 e^{2/3}/4$ for all $r \geq 3$.
        Also, $k - 1 = j_1 + j_2 \geq 2$, so $k \geq 3$.
        Using these two bounds, we get
        \[ \frac{f_{(\mathbf{j}, \sigma)}}{f^\ast_{k,r}} \leq \frac{r}{r+1} \frac{e^{2/r}}{\sqrt{2 \pi}} \sqrt{\frac{k-1}{k-2}} \leq \frac{3}{4}\frac{e^{2/3}}{\sqrt{2 \pi}} \sqrt{2}\le 1. \]

        \noindent \emph{Case 2: $s = r \geq 5$.}
        Now we may bound $W(\mathbf{j})$ as
        \begin{align}
            W(\mathbf{j})
            & \leq \frac{1}{r} \left( \sum_{\ell = 1}^r 
		\frac{rj_\ell + 1}{j_\ell + 1} \right)  +  \frac{1}{j_r + 1} \leq r + \frac{1}{j_r + 1} \leq r + \frac{1}{2},
        \label{equation:W}
		\end{align}
        where in the second inequality we used that $(rx+1)/(x+1) \leq r$ holds for all $x\ge 1$.
        Combining this with \eqref{equation:middlestep2} and \eqref{equation:prodji1}, and using that $r \geq 5$, we get
		\[ \frac{f_{(\mathbf{j}, \sigma)}}{f^\ast_{k,r}} \leq \frac{e \sqrt{r}}{\sqrt{2 \pi}^{r-1}} \left( r + \frac{1}{2} \right) < 1.\]
         \medskip
        
        \noindent \emph{Case 3: $s = r < 5$.} Since $r < 5$, by assumption we have $k \geq 23$ in this case.
        
        Suppose first that $s = r = 4$. 
        Using \eqref{equation:prodji2}, we get $j_1 j_2 j_3 j_4 \geq j_2(k-3-j_2) \geq k-4$.
        Together with \eqref{equation:middlestep2},   \eqref{equation:W}, and substituting $s=r=4$, we have
        \[ \frac{f_{(\mathbf{j}, \sigma)}}{f^\ast_{k,r}} \leq \frac{4}{5} \times \frac{e}{\sqrt{2 \pi}^{3}} \sqrt{\frac{k-1}{k-4}} \left( 4 + \frac{1}{2} \right) < 1,\]
        which holds for all $k \geq 6$.

        Now, assume $s = r = 3$.
        Note that if $j_2 \geq 2$, by \eqref{equation:prodji2} then we have that $j_1 j_2 j_3 \geq 2(k-4)$.
        Using this, together with \eqref{equation:middlestep2} and \eqref{equation:W}, we get
        \[ \frac{f_{(\mathbf{j}, \sigma)}}{f^\ast_{k,r}} \leq \frac{3}{4} \times \frac{e}{2\pi} \sqrt{\frac{k-1}{2(k-4)}} \left( 3 + \frac{1}{2} \right) < 1, \]
        where the last inequality uses $k \geq 10$.
        Hence, we can assume $s = r = 3$ and $j_2 = 1$. Therefore, $j_3 = 1$ and $j_1 = k-3$.
        This implies that $W(\mathbf{j}) = \frac{3k-8}{4k-8} + \frac{3}{2} \leq \frac{9}{4}$.
        Hence, from \eqref{equation:middlestep2} we get
        \[ \frac{f_{(\mathbf{j}, \sigma)}}{f^\ast_{k,r}} \leq \frac{3}{4} \times \frac{9}{4} \times \frac{e}{2\pi} \sqrt{\frac{k-1}{k-3}} < 1, \]
        where the last inequality holds for $k \geq 6$. 

        Finally, we can assume $s = r = 2$.
        If $j_2 \geq 4$, then by \eqref{equation:prodji2} we have $j_1 j_2 \geq 4(k-5)$.
        We also have $W(\mathbf{j}) \leq r+\frac{1}{2} = \frac{5}{2}$.
        Then, using \eqref{equation:middlestep2} we have
        \[ \frac{f_{(\mathbf{j}, \sigma)}}{f^\ast_{k,r}} \leq \frac{2}{3} \times \frac{e}{\sqrt{2 \pi}} \sqrt{\frac{k-1}{4(k-5)}} \times \frac{5}{2} < 1, \]
        where the last inequality holds for $k \geq 23$.

        Assume now that $s = r = 2$ and $j_2 = 3$; hence $j_1 = k-4$.
        We have $j_1 j_2 = 3(k-4)$, and we also have
        $W(\mathbf{j}) = \frac{3}{7} \left( \frac{2(k-4) + 1}{k-3} + \frac{7}{4} \right) + \frac{1}{4} = 1 + \frac{3}{7} \times \frac{2k - 7}{k-3} \leq \frac{13}{7}$.
        From \eqref{equation:middlestep2}, we deduce
        \[ \frac{f_{(\mathbf{j}, \sigma)}}{f^\ast_{k,r}} \leq \frac{2}{3} \times \frac{e}{\sqrt{2 \pi}} \sqrt{\frac{k-1}{3(k-4)}} \times \frac{13}{7} < 1, \]
        where the latter inequality holds for all $k \geq 9$.

        Assume now that $s = r = 2$ and $j_2 = 2$; hence $j_1 = k-3$.
        We have $j_1 j_2 = 2(k-3)$, and we also have
        $W(\mathbf{j}) = \frac{2}{5} \left( \frac{2(k-3)+1}{k-2} + \frac{5}{3} \right) + \frac{1}{3} = 1 + \frac{2}{5} \frac{2k-5}{k-2} \leq \frac{9}{5}$.
        From \eqref{equation:middlestep2}, we deduce
        \[ \frac{f_{(\mathbf{j}, \sigma)}}{f^\ast_{k,r}} \leq \frac{2}{3} \times \frac{e}{\sqrt{2 \pi}} \sqrt{\frac{k-1}{2(k-3)}} \times \frac{9}{5} < 1, \]
        where the latter inequality holds for all $k \geq 15$.

        Hence, we can assume that $s = r = 2$ and $j_2 = 1$.
        This implies that $j_1 = k-2$, so $\mathbf{j}=(k-2,1)$.
        Thus, from \eqref{equation:middlestep} we have
        \[\frac{f_{(\mathbf{j},1)}}{f^*_{k,2}}=\frac{3(2(k-2)+1)^{k-2}}{(2(k-1)+1)^{k-1}}\times \binom{k-1}{k-2}\times W(k-2,1).\]
A direct computation shows that 
\[W(k-2,1)=\frac{1}{3}\left(\frac{2(k-2)+1}{k-1}+\frac{3}{2}\right)+\frac{1}{2}=\frac{5k-6}{3k-3}.\]
Therefore, 
\begin{eqnarray*}\frac{f_{(\mathbf{j},1)}}{f^*_{k,2}}&=&\left(\frac{2k-3}{2k-1}\right)^{k-2}\cdot \frac{3k-3}{2k-1}\cdot \frac{5k-6}{3k-3}\\
&\le & \left(\frac{2k-3}{2k-1}\right)^{(2k-1)/2}\cdot \left(\frac{2k-3}{2k-1}\right)^{-3/2}\cdot \frac{5k-6}{2k-1} \\
&\le& e^{-1}\cdot \left(\frac{2k-3}{2k-1}\right)^{-3/2}\cdot \frac{5k-6}{2k-1}<1,
\end{eqnarray*}
where the last inequality holds for all $k \geq 16$.
This finishes the case $\sigma = 1$.

For $\sigma = -1$, the strategy is similar.
Note that we can assume $r \geq 3$, because $f_{((j_1, j_2), -1)} = f_{((j_1 - 1, j_2 - 1), 1)}$.
Let $r \geq 3$ and $\mathbf{j} = (j_1, \dotsc, j_r)$ satisfy $j_1 \geq j_2 \geq \dotsb \geq j_r \geq 1$ and $k+1 = \sum_{\ell = 1}^r j_\ell$.
Note this implies $k+1 \geq r$ as well.
By \cref{lemma:fkr-reduction2}, we have
	\begin{align*} f_{(\mathbf{j}, \sigma)}
		& = \binom{k+1}{j_1, \dotsc, j_r} \frac{\prod_{\ell = 1}^r (r j_\ell - 1)^{j_\ell}}{k^k r^{k-1}(k+1)} \tilde{W}(\mathbf{j}),
		\end{align*}
		where in the last step we wrote \[\tilde{W}(\mathbf{j}) :=  \frac{j_r}{r j_r - 1}\left( \left (\sum_{\ell = 1}^r \frac{j_\ell}{r j_\ell - 1} \right ) - \frac{1}{r j_r - 1} \right).\]
Using \cref{lemma:robbins} and similar bounds as before, we get
\begin{align}
     \frac{f_{(\mathbf{j}, \sigma)}}{f^\ast_{k,r}}
    & \label{extra2} = \binom{k+1}{j_1, \dotsc, j_r} \frac{1}{k(k+1)} \tilde{W}(\mathbf{j}) \frac{\prod_{\ell = 1}^r (r j_\ell - 1)^{j_\ell}}{(r(k-1)+1)^{k-1}} \\
    & \nonumber \leq \frac{1}{\sqrt{2 \pi}^{r-1}} \sqrt{\frac{k+1}{\prod_{\ell = 1}^r j_i}} \tilde{W}(\mathbf{j}) \frac{(k+1)^k}{k} \frac{r^{k+1}}{(r(k-1)+1)^{k-1}} \prod_{\ell=1}^r \left( 1 - \frac{1}{r j_\ell} \right)^{j_\ell} \\
    & \nonumber \leq \frac{1}{ e\sqrt{2 \pi}^{r-1}} \sqrt{\frac{k+1}{\prod_{\ell = 1}^r j_i}} \tilde{W}(\mathbf{j}) \frac{r^2(k+1)}{k} \left( \frac{r(k+1)}{r(k-1)+1} \right)^{k-1}\\
    & \leq \frac{e}{ \sqrt{2 \pi}^{r-1}} \sqrt{\frac{k+1}{\prod_{\ell = 1}^r j_i}} \tilde{W}(\mathbf{j}) \frac{r^2(k+1)}{k}. \label{extra}
    \end{align}

    Using $k+1 \geq r$ and $j_r \geq 1$, we get the simple bounds $\tilde{W}(\mathbf{j}) \leq 1/(r-1)$ and $(k+1)/k \leq r/(r-1)$. Plugging this in we get
    \begin{equation}\label{eq:sigma=-1}
\frac{f_{(\mathbf{j}, \sigma)}}{f^\ast_{k,r}} \leq \frac{e }{\sqrt{2 \pi}^{r-1}} \frac{r^3}{(r-1)^2} \sqrt{\frac{k+1}{\prod_{\ell = 1}^r j_i}}.
    \end{equation}
    Now we bound the term $\Omega := \sqrt{\frac{k+1}{\prod_{\ell = 1}^r j_i}}$ in different ways depending on whether $r\ge 6$ or not.
    We always have $\prod_{\ell = 1}^r j_i \geq j_1 \geq (k+1)/r$, which implies $\Omega \leq \sqrt{r}$.
    Plugging this into \eqref{eq:sigma=-1}, we are done for each $r \geq 6$.
    
    Suppose $r=5$, which implies that $k \geq 4$.
    If $j_2 \geq 2$, then $\prod_{\ell = 1}^5 j_i \geq 2 j_1 \geq 2(k+1)/5$, and therefore $\Omega \leq \sqrt{5/2}$, which again is enough in \eqref{eq:sigma=-1}.
    If $j_2 = 1$, then $\prod_{\ell = 1}^5 j_i = j_1 = k+2-r = k-3$.
    This implies that $\Omega = \sqrt{(k+1)/(k-3)}$. These values of $j_1, \dotsc, j_5$ yield that
    $\tilde{W}(\mathbf{j}) = \frac{19 k - 60}{80 k - 256}$. Combining these observations with
    \eqref{extra}, we are done for all $k \geq 5$; the only remaining case, $k=4$, can be checked directly.

    Suppose $r=4$; so $k \geq 23$.
    If $j_2 \geq 2$, we have $\prod_{\ell = 1}^4 j_i \geq 2(k-3)$, so $\Omega \leq \sqrt{\frac{k+1}{2(k-3)}}$.
    Using $\tilde{W}(\mathbf{j}) \leq 1/(r-1) = 1/3$ in \eqref{extra} we are done for all $k \geq 23$ (in fact $k \geq 8$ suffices).
    Hence we can assume $j_2 = 1$; so $\prod_{\ell = 1}^4 j_i = j_1 = k - 2$ and $\Omega = \sqrt{\frac{k+1}{k-2}}$.
    In this case, $\tilde{W}(\mathbf{j}) = \frac{11k - 24}{36k - 81}$. Again using these expressions in \eqref{extra} we are done for all $k \geq 23$ (in fact $k \geq 16$ suffices).

    Finally, suppose $r = 3$. If $j_3 \geq 2$, we have that 
    $\prod_{\ell = 1}^3 j_i \geq 4(k-3)$, so $\Omega \leq \sqrt{\frac{k+1}{4(k-3)}}$. Note that in this case 
     $\tilde{W}(\mathbf{j}) \leq 2/5$. Combining these observations with
    \eqref{extra}, we are done in this subcase for all $k \geq 23$.
    Next suppose $j_3 =1$ and $j_2 \geq 3$. We have that 
    $\prod_{\ell = 1}^3 j_i \geq 3(k-3)$, so $\Omega \leq \sqrt{\frac{k+1}{3(k-3)}}$. Note that in this case $j_3/(rj_3-1)=1/2$ and $j_2/(rj_2 -1)\leq 2/5$; further, as $k \geq 23$, we have that $j_1\geq 12$ and so 
     $j_1/(rj_1 -1)\leq 12/35$.
    This implies that 
     $\tilde{W}(\mathbf{j}) \leq 13/35$. Combining these observations with
    \eqref{extra}, we are done in this subcase for all $k \geq 23$. The final subcases when $j_3 =1$ and $j_2 \in [2]$ can be checked by applying \eqref{extra2} directly.
	\end{proof}

    We finish this section with a short derivation of Corollaries \ref{cor:vertex} and~\ref{corollary:fversusm}.
    To see \cref{cor:vertex} we just compare the values of $f_{k,2} = (1 - 1/(2k))^{k-1}$ given by \cref{lemma:maximiser} for $k \geq 5$ and the conjectured values $m'_{k} = 1 - ((k-1)/k)^{k-1} \leq m_{k}$ for the $1$-degree perfect matching threshold.
    The key point here is that $f_{k,2}$ is decreasing in $k$ and $m'_{k}$ is increasing in $k$, so the corollary follows by checking that $f_{16,2} > m'_{16}$ and $f_{17,2} < m'_{17}$.
    Similar monotonicity arguments yield \cref{corollary:fversusm}.

	\section{Conclusion}\label{sec:conc}

    In this paper, we have investigated the emergence of colour-bias in perfect matchings under minimum degree conditions.
    We conclude with a few remarks and open problems.

 \subsection{Stability for perfect matchings}\label{sec:stab}
In the cases where $f_{k,r} > m_{k}$, our proof of Theorem~\ref{thm:main} actually shows that if
$H$ has minimum vertex degree $\delta _1 (H) \geq (m_k+o(1))\binom{n-1}{k-1}$ 
then either $H$ has a perfect matching of significant colour-bias or $H$ is close (in edit distance) to an element of $\mathcal{F}^\ast_{k,r}$.
 
 
	\subsection{Cycles}

    A natural question in connection to our research is whether the results can be extended to cyclical structures such as tight Hamilton cycles.
    Formally, a $k$-uniform \emph{tight cycle} comes with a cyclical ordering of its vertices such that every $k$ consecutive vertices form an edge.
	A fundamental result of Rödl, Ruci\'nski and Szemerédi~\cite{RRS08a} states that an $n$-vertex $k$-graph with minimum codegree at least $(1/2+o(1))n$ contains a tight Hamilton cycle.
    Note that a tight Hamilton cycle of order divisible by $k$ contains a perfect matching,
    so the minimum degree thresholds for tight Hamiltonicity are bounded from below by the corresponding thresholds for perfect matchings.
    
    Colour-bias questions in this setting have been investigated by Mansilla Brito~\cite{Man23} as well as Gishboliner,  Glock and  Sgueglia~\cite{GGS23} for codegree conditions, who showed that in this situation the threshold for ordinary resp. colour-bias perfect matchings resp. tight Hamilton cycles coincide.
    For vertex degree however, a more nuanced picture emerges, which we illustrate with the following two problems.

    We believe that the  minimum vertex degree thresholds for colour-bias perfect matchings and tight Hamilton cycles agree in the case of $2$-edge-coloured $3$-graphs.
    Note that this behaviour differs from the ordinary threshold, which is known to be $5/9$~\cite{RRR19}.

    \begin{conjecture}\label{3conjtight}
        For every $\eps > 0$, there are $\gamma>0$ and $n_0\in \mathbb N$ such that every $2$-edge-coloured $3$-graph $H$ on $n \geq n_0$ vertices with $\delta_1(H) \geq \left(3/4+ \eps\right) \binom{n}{2}$ contains a tight Hamilton cycle with colour-bias above $\gamma n$.
    \end{conjecture}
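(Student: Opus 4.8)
\subsection*{A proof plan for \texorpdfstring{\cref{3conjtight}}{the conjecture}}

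The plan is to mirror the proof of \cref{thm:main}, replacing perfect matchings by tight Hamilton cycles throughout. The crucial structural input comes essentially for free: since the minimum vertex degree threshold for tight Hamiltonicity in $3$-graphs is $5/9$~\cite{RRR19}, which is comfortably below $3/4$, the hypothesis $\delta_1(H)\ge(3/4+\eps)\binom n2$ already forces a tight Hamilton cycle, and moreover the whole absorption toolkit is available with slack to spare: an absorbing tight path that can swallow any small reservoir of vertices, a Connecting Lemma joining prescribed ordered pairs of ``ends'' by short tight paths, and a near-spanning tight path covering all but $o(n)$ vertices. The notion of switcher must be adapted accordingly: a \emph{tight-path switcher} is a pair of $2$-edge-coloured tight paths $Q_1,Q_2$ on the same vertex set that agree on their first two and last two vertices (in the same order), such that $Q_1$ has more edges of one colour than $Q_2$; its \emph{order} is $|V(Q_1)|$. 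Because $Q_1$ and $Q_2$ share their two end-pairs, one may be substituted for the other inside any longer tight path without affecting the rest of it. One checks, exactly as in \cref{lemma:colourproportion} (each vertex of a tight Hamilton cycle lies in exactly three of its edges, so $3|V_i|=j_in+\sigma R_i$ for the number $R_i$ of $i$-coloured edges), that every tight Hamilton cycle of an $n$-vertex member of $\F_{3,2}$ with the prescribed part sizes has exactly $n/2$ edges of each colour; and the edge-maximal member of $\F_{3,2}$ generated by $(\mathbf j,\sigma)=((1,1),1)$ has a tight Hamilton cycle along an alternating ordering of its two parts, which shows that $3/4$ is indeed the correct threshold.

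First I would fix a maximal family $\mathcal S$ of vertex-disjoint tight-path switchers of order at most a constant $C=C(\eps)$, and split into two cases. If $|\mathcal S|\ge\gamma' n$ for a suitable $\gamma'=\gamma'(\eps)>0$, then pigeonhole produces $\gamma' n/2$ of them majorising a common colour, say red; using the Connecting Lemma I would string these switchers into one tight path, attach an absorbing tight path, extend by a near-spanning tight path, absorb the leftover vertices, and close up into a tight Hamilton cycle in which each of the $\gamma'n/2$ switchers appears as a contiguous segment. Toggling each such segment between its two states is independent of the rest of the cycle, so from this cycle one obtains two tight Hamilton cycles $C_1$ (all switchers in minority state) and $C_2$ (all in majority state) with $e_{\mathrm{red}}(C_2)\ge e_{\mathrm{red}}(C_1)+\gamma'n/2$; choosing $\gamma<\gamma'/4$, either $C_1$ already has colour-bias above $\gamma n$, or else $e_{\mathrm{red}}(C_1)\ge n/2-\gamma n$ and hence $C_2$ has red-bias above $\gamma n$.

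If instead $|\mathcal S|<\gamma' n$, set $H':=H-V(\mathcal S)$, so $|V(H')|\ge(1-C\gamma')n$, the graph $H'$ has no tight-path switcher of order at most $C$, and (taking $\gamma'$ small in terms of $\eps$) $\delta_1(H')\ge(3/4+\eps/2)\binom{|V(H')|}2$, which in particular forces every two vertices of $H'$ to have many vertex-disjoint pairs in their common neighbourhood. I would then invoke a structural lemma in the spirit of \cref{lem:main}: a $3$-graph with large common neighbourhoods, at least one edge of each colour, and no tight-path switcher of bounded order must lie in $\F^\ast_{3,2}$. If $H'$ has edges of both colours, this applies, and a short computation as in \cref{lem:continuity} shows that the degree bound severely constrains the part sizes of $H'$: writing $\alpha=|V_1|/|V(H')|$, the relative minimum vertex degree of a member of $\F^\ast_{3,2}$ of type $((1,1),1)$ tends to $\min\{1-\alpha^2,\,\alpha(2-\alpha)\}\le 3/4$ (impossible here), while for type $((2,0),1)$ it tends to $\alpha^2$, forcing $\alpha\ge\sqrt{3/4+\eps/2}>5/6$ (the remaining two types are colour-swaps of these). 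In the surviving case one part of $H'$ has more than $\tfrac56|V(H')|$ vertices, and since no cycle edge can have type $(1,2)$ or $(0,3)$, every tight Hamilton cycle of $H'$ has fewer than $3\cdot\tfrac16|V(H')|\le(\tfrac12-\eps')|V(H')|$ edges of the minority colour. Otherwise $H'$ has at most one colour, and then all its tight Hamilton cycles are monochromatic. In either surviving case $H'$ has a tight Hamilton cycle at least a $(\tfrac12+\eps')$-fraction of whose edges share a colour; since $\delta_1(H')>\tfrac59\binom{|V(H')|}2$ we may build such a cycle so that it contains an absorbing tight path, and then absorbing $V(\mathcal S)$ (using $\delta_1(H)\ge(3/4+\eps)\binom n2$) extends it to a tight Hamilton cycle of $H$ with at least $n/2+(\eps'/2)n$ edges of one colour. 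Taking $\gamma\le\eps'/2$ finishes this case.

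The hard part will be the structural lemma for tight-path switchers. Unlike the matching switchers of \cref{lem:main}, a tight-path switcher needs several ``bridging'' edges to be present (the intermediate consecutive triples of $Q_1$ and $Q_2$), and these are \emph{not} guaranteed by a common-neighbourhood hypothesis nor by $\delta_1\ge(3/4+\eps)\binom n2$ alone, since the latter controls vertex degrees but not codegrees. The remedy is to engineer the switcher gadgets more carefully — building them as longer tight paths whose ``interesting'' windows are anchored at vertices of large pairwise codegree (most pairs do have codegree about $\tfrac34 n$, which follows from the vertex-degree bound), or assembling them via the Connecting Lemma — and only then running an auxiliary-digraph argument analogous to the one in the proof of \cref{lem:main} to pin down the partition and the type-versus-colour dictionary. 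Verifying that this can be done robustly, and that all the absorption steps above go through while keeping the switchers available as independently togglable segments, is where the bulk of the technical work lies.
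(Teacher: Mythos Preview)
The statement you are attempting is labelled \emph{Conjecture} in the paper and is left open there; the paper offers no proof to compare against, only the remark that the extremal example coincides with the $(k,r)=(3,2)$ case of \cref{lemma:maximiser-remaining}. So your proposal should be read as a research plan, not as a reconstruction of an existing argument.

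As a plan it is reasonable in outline, and you are candid about where the real difficulty lies, but the central gap is genuine and not obviously closable along the lines you sketch. The structural step you need --- ``large common neighbourhoods, both colours present, and no bounded-order tight-path switcher $\Rightarrow H'\in\mathcal F^\ast_{3,2}$'' --- is \emph{strictly stronger} than \cref{lem:main}, because forbidding tight-path switchers is a weaker hypothesis than forbidding matching switchers (a tight-path switcher must supply all the intermediate consecutive triples, whereas a matching switcher needs only disjoint edges). Every step of the proof of \cref{lem:main} manufactures a matching switcher by freely recombining disjoint edges; under a vertex-degree hypothesis alone you have no control over the codegree of an arbitrary pair, so there is no guarantee that the $(k-1)$-sets $S,T,U$ produced in Claims~\ref{claim:switcher:1}--\ref{claim:typesvscolours} can be threaded into two tight paths on the same vertex set with common end-pairs. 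Your suggested remedies (anchor at high-codegree pairs, or assemble via a Connecting Lemma) do not obviously preserve the delicate ``same vertex set, same end-pairs'' constraint that makes a switcher substitutable inside a longer cycle, and once the gadget order is allowed to grow with $1/\eps$ the interplay with the maximal-family argument and with the absorbing machinery becomes much more delicate. A second, smaller concern is in your Case~1: the switchers in a maximal family have uncontrolled end-pairs, and a Connecting Lemma for tight paths under a pure $\delta_1$ hypothesis typically only connects \emph{typical} ordered pairs, so you would need either to restrict attention to switchers with good ends, or to show that one can always re-route a switcher so that its ends become connectable without destroying the colour discrepancy. None of this looks hopeless, but as written the proposal identifies the obstacles without supplying the ideas needed to overcome them.
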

Note that Conjecture~\ref{3conjtight} has the same extremal example~\cite[Example 4.1]{BTZ24} as the $(k,r) = (3,2)$ case of~\cref{lemma:maximiser-remaining}.

    It is plausible to suspect that this phenomenon persists for higher uniformities and the thresholds for colour-bias perfect matchings and tight Hamilton cycles are always the same.
    However, already for $4$-graphs the pattern changes.
	Indeed, consider the following construction.
	Partition the vertex set of $H$ into $V_1$ of size $7n/8$ and $V_2$ of size $n/8$ where $n$ is divisible by $8$.
	Add all edges that are not of type $(2,2)$.
	Colour the edges of type $(4,0)$ red and of type $(3,1)$ blue.
	The other edges can be coloured arbitrarily.
	For reasons of space and connectivity, a tight Hamilton cycle in $H$ can only be constructed using edges of type $(4,0)$ and $(3,1)$.
    Owing to the sizes of $V_1$ and $V_2$, a perfect matching formed by edges of type $(4,0)$ and $(3,1)$ has to be colour-balanced.
    Since $n$ is divisible by $4$, every tight Hamilton cycle in $H$ is the (edge-disjoint) union of $4$ perfect matchings and has thus to be colour-balanced too.
    On the other hand, the minimum vertex degree of $H$ is $\left({365}/{512} -o(1)\right) \binom{n}{3}$, which is larger than the threshold for colour-bias perfect matchings.
    We believe that this construction is sharp.
	
	\begin{conjecture}
         For every $\eps > 0$, there are $\gamma > 0$ and  $n_0\in \mathbb N$ such that every $2$-edge-coloured $4$-graph $H$ on $n \geq n_0$ vertices with $\delta_1(H) \geq \left(365/512+ \eps\right) \binom{n}{3}$ contains a tight Hamilton cycle with colour-bias above $\gamma n$.
	\end{conjecture}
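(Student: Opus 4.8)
The plan is to graft the switcher mechanism behind \cref{thm:main} onto the absorption method for tight Hamilton cycles. Fix the two colours as red and blue, and suppose for contradiction that $H$ has no tight Hamilton cycle with colour-bias exceeding $\gamma n$. The naive route --- invoke \cref{thm:main} (available since $365/512 > f_{4,2} = 175/256$) to obtain a colour-biased perfect matching and then spin it into a cycle --- does not work: a tight Hamilton cycle of order divisible by $4$ is an edge-disjoint union of four perfect matchings whose individual biases may cancel, so the surplus in the degree hypothesis must instead be spent on the cyclic colour profile directly. The relevant gadget is a \emph{tight switcher}: a pair $(Q_1,Q_2)$ of tight paths with $V(Q_1)=V(Q_2)$ sharing the same ordered pair of end-triples --- so either may be spliced in place of the other inside any tight path or cycle traversing those triples --- where $Q_1$ has strictly more red edges than $Q_2$. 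Let $\mathcal S$ be a maximal family of vertex-disjoint tight switchers, each of order at most a constant $C=C(\eps)$, and split into cases according to $|\mathcal S|$.

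Suppose first that $|\mathcal S|\ge \gamma' n$ for a suitable intermediate constant $\gamma'$. Since $365/512$ lies well within the range where the R\"odl--Ruci\'nski--Szemer\'edi connecting and absorbing lemmas for tight paths apply (it exceeds the vertex-degree threshold for ordinary tight Hamiltonicity in $4$-graphs), I would reserve a disjoint absorbing tight path, then use the connecting lemma to chain a linear-sized, vertex-disjoint subfamily of $\mathcal S$ together with a bounded number of tight paths covering almost all of the remaining vertices into a single tight Hamilton cycle, finally absorbing the leftover. Each switcher of the subfamily can independently be put in its majority or minority state; flipping one changes the global red count by a bounded positive amount, and there are $\Omega(n)$ of them, so some choice of states yields a tight Hamilton cycle whose number of red edges deviates from $n/2$ by more than $\gamma n$, a contradiction.

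If instead $|\mathcal S|<\gamma' n$, remove $V(\mathcal S)$ to obtain a $4$-graph $H'$ with $\delta_1(H')\ge (365/512+\eps/2)\binom n3$, with no tight switcher of order at most $C$, and --- since $\delta_1(H')\ge(1/2+\eps/2)\binom n3$ --- with every pair of vertices having $\Omega(n^3)$ common neighbour-triples. The crux is a structural statement in the spirit of \cref{lem:main}: any such $H'$ must, after deleting $O(1)$ vertices, carry a vertex bipartition $(A,B)$ in which every edge meets $B$ in $0,1,3$ or $4$ vertices, every edge with exactly one vertex in $B$ is monochromatic (say blue), and every edge contained in $A$ is monochromatic in the other colour (red); the proof would follow the pattern of \cref{claim:switcher:1} and \cref{claim:typesvscolours}, using that the absence of short tight switchers forces ``interleavable'' edges to be colour-compatible and that the abundant common neighbourhoods supply room to pin down the colour-type pattern. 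Writing $|B|=\beta n$, a vertex of $B$ then has degree at most $\bigl((1-\beta)^3+3\beta^2(1-\beta)+\beta^3+o(1)\bigr)\binom n3$, a function of $\beta$ that equals $365/512$ exactly at $\beta=1/8$ and is strictly decreasing through that point; hence the $\eps/2$ slack in the degree hypothesis forces $\beta\le 1/8-\eps''$ for some $\eps''=\eps''(\eps)>0$. Finally, an elementary but careful analysis of the cyclic pattern of $A$- and $B$-vertices along any tight Hamilton cycle of $H'$ --- the rigorous form of the ``space and connectivity'' remark: no window may contain exactly two vertices of $B$, which makes any two nearby $B$-vertices propagate into a dense cascade unless $\beta$ is large --- shows that, since $\beta\le 1/8-\eps''$, every tight Hamilton cycle of $H'$ uses only edges of type $(4,0)$ and $(3,1)$. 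Counting windows, such a cycle has exactly $4\beta n$ blue and $n-4\beta n$ red edges, so its colour-bias is $(1-8\beta)n/2\ge 4\eps'' n$, which exceeds $\gamma n$ once $\gamma$ is chosen small --- contradicting the assumption.

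The main obstacle is the structural lemma of the second case. Unlike \cref{lem:main}, whose output is a mere colour-type partition, here the colouring of the ``inefficient'' edges (types $(1,3)$ and $(0,4)$) is essentially unconstrained, so one must isolate exactly the rigidity that matters --- monochromaticity of the type-$(4,0)$ and type-$(3,1)$ classes relative to a bipartition --- and extract it from the non-existence of \emph{bounded-order} tight switchers alone, which appears to require a new idea beyond the in-/out-neighbourhood digraph analysis used for \cref{lem:main}. A secondary difficulty is making the cascade argument of the final step fully rigorous, and executing the Case~1 absorption so that the switcher subfamily stays linear-sized and vertex-disjoint from the absorbing gadget; establishing ordinary vertex-degree tight Hamiltonicity for $4$-graphs in this range, if not already in the literature, would be a further prerequisite.
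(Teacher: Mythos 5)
This statement is not a theorem of the paper: it is \cref{3conjtight}'s companion conjecture, stated in the concluding remarks with only the $(V_1,V_2)=(7n/8,n/8)$ construction offered as evidence of sharpness. The paper gives no proof for you to be compared against, and your write-up is best read as a research plan rather than a proof. You say as much yourself, which is honest, but let me be precise about where the plan is incomplete.

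The load-bearing gap is the structural lemma in your Case~2, and I agree with your own diagnosis: the digraph machinery behind \cref{lem:main} (Claims~\ref{claim:switcher:1}--\ref{claim:typesvscolours}) is engineered to force a \emph{global} colour-type pattern for all edges, which is exactly what a perfect-matching switcher detects. Tight switchers (pairs of tight paths with identical ordered end-triples) are a weaker probe: they only constrain colours of edges that can be ``interleaved'' along a tight path, and edges of type $(1,3)$ or $(0,4)$ need not participate. So the conclusion you want --- monochromaticity of the $(4,0)$ and $(3,1)$ classes, with the other types unconstrained --- cannot be read off from \cref{lem:main} or from a routine variant of it; a genuinely new classification argument is needed, and this is the heart of the conjecture. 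Second, your Case~1 presupposes an absorption/connection framework for tight Hamilton cycles in $4$-graphs at minimum vertex degree $365/512+\eps$; the paper only cites $5/9$ for $3$-graphs~\cite{RRR19}, and the $4$-graph minimum vertex degree tight-Hamiltonicity threshold is not something you can take off the shelf here, so this too is a prerequisite rather than a citation. Third, your Case~2 as written ends by showing that \emph{if} $H'$ has a tight Hamilton cycle, it is biased --- but $H'=H\setminus V(\cS)$, so this is not a Hamilton cycle of $H$ and does not yet contradict the standing assumption. You need either to produce such a cycle in $H'$ and then re-insert the deleted switchers into the cyclic order (which requires a threading/connecting step you haven't supplied), or to turn the structure into a direct degree contradiction the way the paper does via \cref{lem:continuity}. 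As it stands the contradiction in Case~2 does not close.

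The pieces that are sound: the degree computation $(1-\beta)^3+3\beta^2(1-\beta)+\beta^3$ evaluating to $365/512$ at $\beta=1/8$ is correct and matches the paper's construction, the ``no window contains exactly two $B$-vertices'' cascade heuristic correctly captures the paper's space/connectivity remark, and the count of $4\beta n$ blue versus $(1-4\beta)n$ red edges in a $(4,0)/(3,1)$-only Hamilton cycle is right (assuming $B$-vertices are spread out, which the cascade is meant to force). But these are the easy parts; the conjecture's difficulty is concentrated precisely in the structural lemma and the Hamilton-cycle plumbing you have left open.
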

	
	\subsection{Random hypergraphs}
	We can mimic the proof of our main result to find colour-bias perfect matchings in other situations; as long as for every $x, y \in V(H)$ we have at least $\Omega(n)$ vertex-disjoint $S \in N(x) \cap N(y)$.
	This property holds with high probability (w.h.p.) in a binomial random $k$-graph $H_k(n,p)$ if $p^2 n^{k-1} = \Omega(n)$, that is, if $p = \Omega(\sqrt{n^{2-k}})$.
    We sketch how to use the Key Lemma (\cref{lem:main}) in this setting.

    \begin{proposition} \label{proposition:random} 
        For any $k \geq 3$ and  $r \geq 2$, there exist $C_{k,r}, \gamma_{k,r} > 0$ such that, for $p \geq C_{k,r} n^{1 - k/2}$, and $n$ divisible by $k$, w.h.p. $H \sim H_k(n,p)$ has the property that, for each $r$-edge-colouring of $H$, there exists a perfect matching containing at least $n/(kr) + \gamma_{k,r}n$ edges of the same colour.
    \end{proposition}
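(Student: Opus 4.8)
The plan is to run the proof of \cref{thm:main} essentially verbatim, replacing the two appeals to the minimum degree hypothesis --- one to find perfect matchings after deleting a bounded number of vertices, the other to contradict \cref{lem:continuity} --- by three properties that hold w.h.p.\ for $H\sim H_k(n,p)$. Fix $k\ge 3$ and $r\ge 2$; let $c=c(k)>0$ be the implicit constant in property \textbf{(P1)} below, pick constants $0<\gamma_{k,r}\ll\delta\ll\min\{c,1/(kr)\}$, let $C_{k,r}$ be large, and set $p:=C_{k,r}n^{1-k/2}$. Since $H$ has a perfect matching w.h.p., any $r$-colouring that uses at most $r-1$ colours already admits a perfect matching with a colour class of size at least $\tfrac{n}{k(r-1)}>\tfrac{n}{kr}+\gamma_{k,r}n$; hence we only need to treat colourings that use all $r$ colours.

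Condition on the following three events, each holding w.h.p. \textbf{(P1)} Every $x,y\in V(H)$ have at least $cn$ pairwise vertex-disjoint $(k-1)$-sets in $N_H(x)\cap N_H(y)$; this is a routine second-moment estimate (as noted after the statement of \cref{proposition:random}), since for distinct $x,y$ the quantity $|N_H(x)\cap N_H(y)|$ is binomial with mean $\Theta(p^2n^{k-1})=\Theta(n)$ and, w.h.p., these $(k-1)$-sets are spread out enough --- few intersecting pairs --- that $\Omega(n)$ of them are pairwise disjoint. \textbf{(P2)} For every $W\subseteq V(H)$ with $k\mid|W|$ and $|W|\le\delta n$, the hypergraph $H-W$ has a perfect matching; indeed $H-W$ is distributed as $H_k(n-|W|,p)$, and since $p$ lies polynomially above the perfect matching threshold this graph has a perfect matching with failure probability $e^{-\Omega(pn^{k-1})}=e^{-\Omega(n^{k/2})}$, which for $k\ge 3$ beats a union bound over the $\le 2^n$ choices of $W$. \textbf{(P3)} For all disjoint $S,W\subseteq V(H)$ with $|W|\le\delta n$, $|S|\ge n/\log n$ and $|V(H)\setminus(S\cup W)|\ge n/\log n$, and every $a\in\{0,1,\dots,k\}$, there is an edge $e$ of $H$ with $e\cap W=\emptyset$ and $|e\cap S|=a$; the number of candidate edges is $\Omega((n/\log n)^k)$, so the failure probability for a fixed triple is $e^{-\Omega(pn^k/\log^k n)}=e^{-\Omega(n^{k/2+1}/\log^k n)}$, which again survives the union bound over all triples.

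Assume for contradiction that $H$ has a surjective $r$-colouring in which every perfect matching has at most $\tfrac{n}{kr}+\gamma_{k,r}n$ edges of each colour. Let $\mathcal S$ be a maximal collection of vertex-disjoint switchers of order at most $k^2+k$; as each has order divisible by $k$, so is $|V(\mathcal S)|$. Exactly as in the proof of \cref{thm:main}, but using \textbf{(P2)} to obtain a perfect matching of $H$ minus a suitable subcollection in place of the degree hypothesis, one gets $|V(\mathcal S)|\le\delta n$: otherwise some colour $i$ would admit a subcollection $\mathcal S_i'$ of $i$-switchers with $r\gamma_{k,r}n<|\mathcal S_i'|$ and $|V(\mathcal S_i')|\le\delta n$, and a perfect matching of $H-V(\mathcal S_i')$ completed with the minority (resp.\ majority) states of $\mathcal S_i'$ would produce two perfect matchings of $H$ whose numbers of $i$-coloured edges differ by at least $|\mathcal S_i'|$, contradicting that both lie in $[\tfrac{n}{kr}-(r-1)\gamma_{k,r}n,\tfrac{n}{kr}+\gamma_{k,r}n]$. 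Put $H':=H-V(\mathcal S)$ and $m:=|V(H')|\ge(1-\delta)n$. By \textbf{(P2)}, $H'$ has a perfect matching $M$; completing $M$ with switcher states and applying the assumption, $M$ has at most $\tfrac{n}{kr}+\gamma_{k,r}n$ edges of each colour, and therefore at least $\tfrac{m}{k}-(r-1)(\tfrac{n}{kr}+\gamma_{k,r}n)>0$ of each colour. Thus $H'$ has an edge of each colour; by \textbf{(P1)} and $\delta<c$, every pair of vertices of $H'$ still has at least $k^2$ disjoint $(k-1)$-sets in its common neighbourhood in $H'$; and $H'$ has no switcher of order at most $k^2+k$ by maximality of $\mathcal S$. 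Hence \cref{lem:main} gives $H'\in\mathcal{F}^\ast_{k,r}(\mathbf{j},\sigma)$ for some $k$-valid $(\mathbf{j},\sigma)$ with vertex partition $\{V_1,\dots,V_r\}$.

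Finally, this contradicts \textbf{(P3)}. Writing $x_\ell$ for the number of $\ell$-coloured edges of $M$, counting its vertices in $V_\ell$ yields $|V_\ell|=j_\ell\tfrac{m}{k}+\sigma x_\ell$ (as in the proof of \cref{lemma:colourproportion}); since each $x_\ell$ lies between $\tfrac{m}{k}-(r-1)(\tfrac{n}{kr}+\gamma_{k,r}n)$ and $\tfrac{n}{kr}+\gamma_{k,r}n$, and $k$-validity forces $j_\ell\ge 1$ when $\sigma=-1$, a short calculation gives $|V_\ell|\ge n/(4kr)>n/\log n$ for every $\ell$ and $n$ large. Apply \textbf{(P3)} with $S:=V_1$ and $W:=V(\mathcal S)$, noting that $V(H)\setminus(V_1\cup V(\mathcal S))=V(H')\setminus V_1\supseteq V_2$ has size more than $n/\log n$. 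For each $a\in\{0,1,2\}$ we obtain an edge $e_a$ of $H$ disjoint from $V(\mathcal S)$ --- hence an edge of $H'$ --- with $|e_a\cap V_1|=a$. But every edge $e$ of $H'\in\mathcal{F}^\ast_{k,r}(\mathbf{j},\sigma)$ has type $\mathbf{j}+\sigma\mathbf{e}_i$ for some $i$, so $|e\cap V_1|\in\{j_1,j_1+\sigma\}$, a set of at most two consecutive integers, which cannot contain $\{0,1,2\}$ --- a contradiction. The routine parts omitted here coincide with those in \cref{thm:main}; the genuinely new input is \textbf{(P2)}, where one needs the existence of a perfect matching in $H_k(n,p)$ to be robust enough to survive a union bound over the deletion of an adversarially chosen linear-sized vertex set (which is exactly where $p=\Omega(n^{1-k/2})$ and $k\ge 3$ enter, and which may alternatively be supplied by a direct absorption argument), together with the observation that a nearly colour-balanced perfect matching forces every part of the $\mathcal{F}^\ast_{k,r}$-partition to have linear size, so that \textbf{(P3)} becomes applicable.
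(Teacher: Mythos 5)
Your proof is essentially correct and runs along the same lines as the paper's, though the two sketches differ in a few technical choices and one of your black-box appeals needs patching.

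\textbf{Where you differ from the paper.} The paper verifies three w.h.p.\ properties: (i) every set of $\ge n/(2r)$ vertices spans an edge, (ii) common neighbourhoods survive the deletion of any set of $\le n/3$ vertices, and (iii) a perfect matching survives the deletion of any set of $\le n/2$ vertices. It then applies the Key Lemma \emph{iteratively}: so long as fewer than $n/2$ vertices have been removed, (i) shows the remainder is not in $\mathcal{F}^\ast_{k,r}$, so the Key Lemma's contrapositive yields another small switcher, and this is repeated $n/(2k^4 r)$ times; (iii) then completes the matching and one concludes as in Theorem~\ref{thm:main}. Your version follows the Theorem~\ref{thm:main} template even more literally: take a maximal switcher family, show it is small, apply the Key Lemma to the untouched part $H'$ to get $H'\in\mathcal{F}^\ast_{k,r}$, and then obtain the contradiction. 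For the final contradiction you replace the paper's property (i) with your property (P3), which produces, for each $a\in\{0,1,2\}$, an edge of $H'$ meeting $V_1$ in exactly $a$ vertices. This is a neat variant: it contradicts $H'\in\mathcal{F}^\ast_{k,r}$ directly from the type constraint $|e\cap V_1|\in\{j_1, j_1+\sigma\}$, whereas the paper's argument goes via an independent-set count, which requires a careful justification that $\mathcal{F}_{k,r}$-graphs contain a large independent set. Your route is arguably cleaner at this step, and your observation that near colour-balance forces all parts of the $\mathcal{F}^\ast_{k,r}$ partition to have linear size is the correct way to make (P3) applicable.

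\textbf{The gap.} Your (P2) asserts, as a black box, that $H_k(m,p)$ fails to have a perfect matching with probability $e^{-\Omega(p m^{k-1})}$ once $p$ is polynomially above the threshold. I do not believe this is an off-the-shelf consequence of the Johansson--Kahn--Vu theorem, which gives only $o(1)$ failure probability; an exponential bound with this sharp exponent would need its own proof. The paper supplies exactly the missing ingredient via a sprinkling argument: writing $H_k(n,p)$ as a union of $T = p/p^\ast$ independent copies of $H_k(n, p^\ast)$, each containing a perfect matching on any fixed large $X$ with probability at least $1/2$, gives failure probability $2^{-T} = e^{-\Omega(p m^{k-1}/\log m)}$, which for $k\ge 3$ and $p\ge C n^{1-k/2}$ is still $e^{-\omega(n)}$ and hence beats the $2^n$ union bound. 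You do hedge by saying the robustness "may alternatively be supplied by a direct absorption argument", and indeed something like this is needed; but as written, the failure bound in (P2) is not justified. With the sprinkling step inserted (and the exponent weakened by the harmless $\log$ factor), your argument is complete and correct.
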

For $k = 2$ and any $r$, an analogue of Proposition~\ref{proposition:random} is known to hold, even with the optimal discrepancy and (essentially) optimal value of $p$; see~\cite[Theorem 2]{GKM22}.
    \begin{proof}[Proof sketch]
        Let $C_{k,r}$ be sufficiently large, $p := C_{k,r} n^{1 - k/2}$, and $H := H_k(n,p)$.
        One can check that, w.h.p.,
        \begin{enumerate}[label={(\roman*)}]
            \item \label{item:random-indep} every set $X \subseteq V(H)$ of size at least $n/(2r)$ contains an edge of $H$, and
            \item \label{item:random-common} for each pair of distinct vertices $x, y \in V(H)$ and
            each set $X\subseteq V(H)$ of size at least $n/3$, we have that $N(x) \cap N(y)\cap X \neq \emptyset$.
        \end{enumerate}
        We also claim that, w.h.p.,
        \begin{enumerate}[label={(\roman*)}, resume]
            \item \label{item:random-matching} for every $X \subseteq V(H)$ of size at least $n/2$ and divisible by $k$, there exists a perfect matching in $H[X]$.
        \end{enumerate}
        To see this, we use a sprinkling argument.
        Let $p^\ast$ be such that $H_k(n/2, p^\ast)$ contains a perfect matching with probability at least $1/2$.
        By the work of Johannson, Kahn and Vu~\cite{JKV08} we know that we can take $p^\ast = O( n^{1 - k} \log n)$.
        Let $T := p / p^\ast$.
        Note that $T \geq \Omega(n^{k/2} / \log n) = \omega(n)$, since $k \geq 3$.
        For each $1 \leq i \leq T$, define $H_i: = H_k(n, p^\ast)$ on the same vertex set $V$.
        Then $H^\ast := \bigcup_{i=1}^{T} H_i$ distributes as $H_k(n, q)$ with $q \leq T p^\ast = p$, so it suffices to show that w.h.p. $H^\ast$ satisfies \ref{item:random-matching}.
        Indeed, for each set $X \subseteq V$ of size at least $n/2$, the probability of not containing a perfect matching in $H_i[X]$ is at most $1/2$, so for $H^\ast$ the probability is at most $2^{-T} = 2^{- \omega(n)}$, which allows us to take a union bound over the at most $2^n$ choices of $X$.

        Assuming that $H$ satisfies \ref{item:random-indep}--\ref{item:random-matching}, we can conclude the proof. Indeed,
        given any $r$-edge-colouring of $H$ and $Z \subseteq V(H)$ with $|Z| \leq n/2$, property \ref{item:random-indep} implies that $H \setminus Z$ does not belong to $\mathcal{F}_{k,r}$ (since $k$-graphs $G$ in $\mathcal{F}_{k,r}$ contain an independent set on a least $|V(G)|/r$ vertices and $H \setminus Z$ does not).
        If $|Z| \leq n/2$, then using property~\ref{item:random-common} repeatedly implies that there exist at least $k^2$ vertex-disjoint $(k-1)$-sets in $(N(x) \cap N(y))\setminus Z$, for every $x, y \in V(H)$.
        Thus, we can apply repeatedly the Key Lemma (\cref{lem:main}) to greedily find $n/(2k^4r)$ vertex-disjoint switchers in~$H$, each of size at most $k^2 + k$.
        After removing the switchers we are left with a set of vertices of size at least $n - (k^2 + k)n/(2k^4 r) \geq n/2$, so property~\ref{item:random-matching} implies that there exists a matching in $H$ covering precisely everything outside the switchers.
        This allows us to conclude as in the proof of \cref{thm:main}.
    \end{proof}
    We observe, however, that the threshold $\Theta(n^{-(k-1)}\log n)$ for getting a perfect matching w.h.p. in $H_k(n,p)$ is much lower than what Proposition~\ref{proposition:random} requires.
    We leave as an open problem  determining the correct threshold (depending on $k, r$) that always guarantees the existence of a colour-bias perfect matching in $H_k(n,p)$, no matter how we $r$-colour its edges.
    This question recently motivated Gishboliner, Glock, Michaeli and Sgueglia~\cite{GGMS25} to study matchings in sparsifications of edge-coloured hypergraphs, see~\cite[Theorem 1.4]{GGMS25} for more details.

    \subsection{Switchers and absorbers}
	
	In this paper, we showed that $k$-graphs whose minimum degree surpasses a certain threshold must contain a colour-bias perfect matching. Key to this result are certain small configurations called switchers, which are basically two superimposed matchings of different colour profiles.
	There is another well-known configuration in the context of perfect {matchings}, which is called an {absorber}. Formally, a $k$-uniform \emph{absorber} consists of two matchings such that one of them covers exactly $k$ more vertices than the other. Absorbers play a crucial role in the problem of finding perfect matchings under minimum degree conditions (see, e.g.,~\cite{HPS09}).
	In our proof, we exploited the fact that every pair of vertices shares many common neighbours. As it turns out, the same fact can be (and has been) used to construct absorbers for perfect matchings. It is thus natural to wonder whether there is a more direct connection between the two structures.
	We formalise this discussion as follows:
	
	The existence of absorbers can be captured with the concept of lattice completeness.	
	For a $k$-graph $H$ with vertex set $V$ and an edge $e \in E(H)$, we denote by $\vn_e \in \NATS _0^V$ the \emph{indicator vector}, which takes value $1$ at index $v$ if $v \in e$.
	We set $\vn_v = \vn_e$ for $e=(v)$.
	The \emph{lattice} of $H$ is the additive subgroup $\cL(H) \subset \INTS^{V}$ generated by the vectors $\vn_e$ with $e \in E(H)$.
	We say that $\cL(H)$ is \emph{complete} if it contains the \emph{transferral} $\vn_v - \vn_u$ for every $u,v \in V$.
	Note that the lattice of a $k$-graph is (trivially) complete if every two vertices of $H$ share a common neighbour.
	Moreover, it can be shown that a $k$-graph whose lattice is complete in a robust sense admits an absorber.
	On the other hand, it is not hard to see that a $k$-graph, which has a perfect matching after the deletion of any $k$ vertices, must have a complete lattice.
	In other words, lattice completeness is something we can (essentially) take as granted in the context of extremal perfect matching problems.
	It is therefore a tantalizing (if somewhat open-ended) question whether one can replace the common neighbourhood condition in \cref{lem:main} with a lattice completeness condition.

\section*{Acknowledgments}
Much of this research was undertaken at the 2nd Graph Theory in the Andes Workshop (March 25--29 2024). The authors are extremely grateful to Maya Stein for organising this event. 
We also wish to express our gratitude to Juan Perro for useful discussions and emotional support.

 We would like to thank the authors of \cite{luma} for interesting discussions after our respective papers were first written. In particular, we discussed the differences in the statements and proofs of our main results. A discussion on stability led us to add in our remark in Section~\ref{sec:stab}.
 We also provided them with a construction answering a question in their concluding remarks~\cite[Section 6]{luma}.

 Finally, we are grateful to the referees for their helpful and careful reviews.


\smallskip

{\noindent \bf Support statement.}

Hi\d{\^e}p H\`an was supported by the ANID Regular grant 1231599 and by ANID Basal Grant CMM FB210005, Richard Lang was supported by the EU Horizon 2020 programme MSCA (101018431), Jo\~ao Pedro Marciano was partially supported by FAPERJ (Proc.~E-26/200.977/2021) and CAPES, Mat\'ias Pavez-Sign\'e was supported by ANID-FONDECYT Regular grant 1241398 and ANID Basal Grant CMM FB210005, Nicol\'as Sanhueza-Matamala was supported by ANID-FONDECYT Iniciación Nº11220269 grant and ANID-FONDECYT Regular Nº1251121 grant, Andrew Treglown was supported by EPSRC grant EP/V048287/1, and Camila Z\'arate-Guer\'en was supported by EPSRC.

{\noindent \bf Open access statement.}
This research was funded in part by  EPSRC grant EP/V048287/1. For the purpose of open access, a CC BY public copyright licence is applied to any Author Accepted Manuscript arising from this submission.
 
{\noindent \bf Data availability statement.}
There are no additional data beyond that contained within the main manuscript.
    
	\bibliographystyle{plain}
	\bibliography{bibliography}

\begin{thebibliography}{10}

\bibitem{AFH+12}
N.~Alon, P.~Frankl, H.~Huang, V.~R\"{o}dl, A.~Ruci\'{n}ski, and B.~Sudakov.
\newblock Large matchings in uniform hypergraphs and the conjecture of {E}rd{\H{o}}s and {S}amuels.
\newblock {\em J. Combin. Theory Ser. A}, 119(6):1200--1215, 2012.

\bibitem{BCJ+20}
J.~Balogh, B.~Csaba, Y.~Jing, and A.~Pluh{\'a}r.
\newblock On the discrepancies of graphs.
\newblock {\em Electron. J. Combin.}, 27:P2.12, 2020.

\bibitem{BCP+21}
J.~Balogh, B.~Csaba, A.~Pluhár, and A.~Treglown.
\newblock A discrepancy version of the {H}ajnal--{S}zemer{\'e}di theorem.
\newblock {\em Comb. Probab. Comput.}, 30(3):444–459, 2021.

\bibitem{BTZ24}
J.~Balogh, A.~Treglown, and C.~Z{\'a}rate-Guer{\'e}n.
\newblock A note on colour-bias perfect matchings in hypergraphs.
\newblock {\em SIAM J. Discr. Math}, 38(4):1808--1839, 2024.

\bibitem{Bra22}
D.~Brada{\v{c}}.
\newblock Powers of {H}amilton cycles of high discrepancy are unavoidable.
\newblock {\em Electron. J. Combin.}, 29:P3.22, 2022.

\bibitem{BCG+23}
D.~Brada{\v{c}}, M.~Christoph, and L.~Gishboliner.
\newblock Minimum degree threshold for {$H$}-factors with high discrepancy.
\newblock {\em Electron. J. Combin.}, 31(3):P3.33, 2024.

\bibitem{Erd63}
P.~Erd\H{o}s.
\newblock Ramsey {\'e}s {V}an der {W}aerden t{\'e}tel{\'e}vel kapcsolatos kombinatorikai k{\'e}rd{\'e}sekr{\H{o}}l.
\newblock {\em Mat. Lapok}, 14:29--37, 1963.

\bibitem{ES72}
P.~Erd\H{o}s and J.~Spencer.
\newblock Imbalances in $k$-colorations.
\newblock {\em Networks}, 1:379--385, 1972.

\bibitem{FHL+21}
A.~Freschi, J.~Hyde, J.~Lada, and A.~Treglown.
\newblock A note on color-bias {H}amilton cycles in dense graphs.
\newblock {\em SIAM J. Discr. Math.}, 35(2):970--975, 2021.

\bibitem{GGMS25}
L.~Gishboliner, S.~Glock, P.~Michaeli, and A.~Sgueglia.
\newblock Defect and transference versions of the {A}lon--{F}rankl--{L}ov{\'a}sz theorem.
\newblock {\em arxiv:2503.05089}, 2025.

\bibitem{GGS23}
L.~Gishboliner, S.~Glock, and A.~Sgueglia.
\newblock Tight {H}amilton cycles with high discrepancy.
\newblock {\em Comb. Probab. Comput.}, to appear.

\bibitem{GKM22}
L.~Gishboliner, M.~Krivelevich, and P.~Michaeli.
\newblock Color-biased {H}amilton cycles in random graphs.
\newblock {\em Random Structures \& Algorithms}, 60(3):289--307, 2022.

\bibitem{GKP22}
L.~Gishboliner, M.~Krivelevich, and P.~Michaeli.
\newblock Discrepancies of spanning trees and {H}amilton cycles.
\newblock {\em J. Combin. Theory Ser. B}, 154:262--291, 2022.

\bibitem{HPS09}
H.~H{\`a}n, Y.~Person, and M~Schacht.
\newblock On perfect matchings in uniform hypergraphs with large minimum vertex degree.
\newblock {\em SIAM J. Discrete Math.}, 23(2):732--748, 2009.

\bibitem{JKV08}
A.~Johansson, J.~Kahn, and V.~Vu.
\newblock Factors in random graphs.
\newblock {\em Random Struct. Algorithms}, 33(1):1--28, 2008.

\bibitem{Kha16}
I.~Khan.
\newblock Perfect matchings in $4$-uniform hypergraphs.
\newblock {\em J. Combin. Theory Ser. B}, 116:333--366, 2016.

\bibitem{luma}
H.~Lu, J.~Ma, and S.~Xie.
\newblock Discrepancies of perfect matchings in hypergraphs.
\newblock {\em arXiv:2408.06020}, 2024.

\bibitem{Man23}
C.J. Mansilla~Brito.
\newblock Discrepancia de ciclos hamiltonianos en hipergrafos 3-uniformes.
\newblock Bachelor's thesis, Universidad de Concepci{\'o}n, 2023.

\bibitem{RRR19}
Chr. Reiher, V.~R{\"o}dl, A.~Ruci{\'n}ski, M.~Schacht, and E.~Szemer{\'e}di.
\newblock Minimum vertex degree condition for tight {H}amiltonian cycles in $3$-uniform hypergraphs.
\newblock {\em Proc. Lond. Math. Soc.}, 119(2):409--439, 2019.

\bibitem{Robbins1955}
H.~Robbins.
\newblock A remark on {S}tirling's formula.
\newblock {\em Amer. Math. Monthly}, 62:26--29, 1955.

\bibitem{RRS08a}
V.~R{\"o}dl, A.~Ruci{\'n}ski, and E.~Szemer{\'e}di.
\newblock An approximate {D}irac-type theorem for $k$-uniform hypergraphs.
\newblock {\em Combinatorica}, 28(2):229--260, 2008.

\bibitem{RRS09b}
V.~R{\"o}dl, A.~Ruci{\'n}ski, and E.~Szemer{\'e}di.
\newblock Perfect matchings in large uniform hypergraphs with large minimum collective degree.
\newblock {\em J. Combin. Theory Ser. A}, 116(3):613--636, 2009.

\bibitem{Zha16}
Y.~Zhao.
\newblock Recent advances on {D}irac-type problems for hypergraphs.
\newblock In {\em Recent {T}rends in {C}ombinatorics}, pages 145--165. Springer, [Cham], 2016.

\end{thebibliography}

\end{document}